\documentclass[11pt,a4paper]{article}

\pdfoutput=1 
\usepackage{epsf,epsfig,amsfonts,amsgen,amsmath,amstext,amsbsy,amsopn,amsthm,cases,listings,color
}
\usepackage{ebezier,eepic}
\usepackage{color}
\usepackage{multirow}
\usepackage{epstopdf}
\usepackage{graphicx}
\usepackage{pgf,tikz}
\usepackage{mathrsfs}
\usepackage[marginal]{footmisc}
\usepackage{enumitem}
\usepackage[titletoc]{appendix}
\usepackage{booktabs}
\usepackage{url}
\usepackage{mathtools}

\usepackage{pgfplots}
\usepackage{authblk}
\usepackage{amssymb}

\usepackage{wasysym}

\usepackage{empheq}

\usepackage{dsfont}
\usepackage{verbatim}

\usepackage{caption}
\usepackage{subcaption}
\pgfplotsset{compat=1.18}
\usepackage{mathrsfs}
\usepackage{wasysym} 
\usetikzlibrary{arrows}
\usepackage{aligned-overset}
\usepackage{bm}
\usepackage[backref=page]{hyperref}
\usepackage{makecell}

\newcommand{\Mod}[1]{\ (\mathrm{mod}\ #1)}

\allowdisplaybreaks[1]

\definecolor{uuuuuu}{rgb}{0.27,0.27,0.27}
\definecolor{sqsqsq}{rgb}{0.1255,0.1255,0.1255}

\setlength{\textwidth}{150mm} \setlength{\oddsidemargin}{7mm}
\setlength{\evensidemargin}{7mm} \setlength{\topmargin}{-5mm}
\setlength{\textheight}{245mm} \topmargin -18mm

\newtheorem{definition}{Definition} [section]
\newtheorem{theorem}[definition]{Theorem}
\newtheorem{lemma}[definition]{Lemma}
\newtheorem{proposition}[definition]{Proposition}

\newtheorem{claim}[definition]{Claim}
\newtheorem{problem}[definition]{Problem}

\newtheorem{fact}[definition]{Fact}
\newcommand{\uproduct}{\mathbin{\;{\rotatebox{90}{\textnormal{$\small\Bowtie$}}}}}

\setlength{\parindent}{0pt}
\parskip=8pt
\begin{document}
\title{\bf\Large Andr{\'a}sfai--Erd\H{o}s--S\'{o}s theorem for the generalized triangle}
\date{\today}
\author[1]{Xizhi Liu\thanks{Research supported by ERC Advanced Grant 101020255. Email: \texttt{xizhi.liu.ac@gmail.com}}}
\author[2]{Sijie Ren\thanks{Email: \texttt{rensijie1@126.com}}}
\author[2]{Jian Wang\thanks{Research supported by National Natural Science Foundation of China No. 12471316.\\ Email: \texttt{wangjian01@tyut.edu.cn}}}
\affil[1]{Mathematics Institute and DIMAP,
            University of Warwick,
            Coventry, CV4 7AL, UK}
\affil[2]{Department of Mathematics, 
            Taiyuan University of Technology, Taiyuan, 030024, China}
\maketitle
\begin{abstract}
The celebrated Andr{\'a}sfai--Erd\H{o}s--S\'{o}s Theorem from 1974 shows that every $n$-vertex triangle-free graph with minimum degree greater than $2n/5$ must be bipartite. 
Its extensions to $3$-uniform hypergraphs without the generalized triangle $F_5 = \{abc, abd, cde\}$ have been explored in several previous works such as~\cite{LMR23unif,HLZ24}, demonstrating the existence of $\varepsilon > 0$ such that for large $n$, every $n$-vertex $F_5$-free $3$-graph with minimum degree greater than $(1/9-\varepsilon) n^2$ must be $3$-partite. 

We determine the optimal value for $\varepsilon$ by showing that for $n \ge 5000$, every $n$-vertex $F_5$-free $3$-graph with minimum degree greater than $4n^2/45$ must be $3$-partite, thus establishing the first tight Andr{\'a}sfai--Erd\H{o}s--S\'{o}s type theorem for hypergraphs. 
As a corollary, for all positive $n$, every $n$-vertex cancellative $3$-graph with minimum degree greater than $4n^2/45$ must be $3$-partite. This result is also optimal and considerably strengthens prior work, such as that by Bollob\'{a}s~\cite{Bol74} and Keevash--Mubayi~\cite{KM04Cancel}.

%
%
\medskip

\noindent\textbf{Keywords:} Andr{\'a}sfai--Erd\H{o}s--S\'{o}s theorem, generalized triangle, cancellative hypergraph, degree-stability
\end{abstract}
\section{Introduction}\label{SEC:Intorduction}
Given an integer $r\ge 2$, an \textbf{$r$-uniform hypergraph} (henceforth \textbf{$r$-graph}) $\mathcal{H}$ is a collection of $r$-subsets of some finite set $V$.
We identify a hypergraph $\mathcal{H}$ with its edge set and use $V(\mathcal{H})$ to denote its vertex set. 
The size of $V(\mathcal{H})$ is denoted by $v(\mathcal{H})$. 
The \textbf{degree} $d_{\mathcal{H}}(v)$ of a vertex $v$ in $\mathcal{H}$ is the number of edges containing $v$. 
We use $\delta(\mathcal{H})$, $\Delta(\mathcal{H})$, and $d(\mathcal{H})$ to denote the \textbf{minimum}, \textbf{maximum}, and \textbf{average degree} of $\mathcal{H}$, respectively.

Given a family $\mathcal{F}$ of $r$-graphs, we say an $r$-graph $\mathcal{H}$ is \textbf{$\mathcal{F}$-free}
if it does not contain any member of $\mathcal{F}$ as a subgraph.
The \textbf{Tur\'{a}n number} $\mathrm{ex}(n, \mathcal{F})$ of $\mathcal{F}$ is the maximum number of edges in an $\mathcal{F}$-free $r$-graph on $n$ vertices. 
The \textbf{Tur\'{a}n density} of $\mathcal{F}$ is defined as $\pi(\mathcal{F})\coloneq \lim_{n\to\infty}\mathrm{ex}(n,\mathcal{F})/{n\choose r}$. 
We call $\mathcal{F}$ \textbf{nondegenerate} if $\pi(\mathcal{F}) > 0$. 

Determining $\pi(\mathcal{F})$ (and $\mathrm{ex}(n,\mathcal{F})$) is a central topic in Extremal Combinatorics. 
Extending Tur\'{a}n's foundational theorem~\cite{TU41} on $\mathrm{ex}(n,K_{\ell+1})$, the classical Erd\H{o}s--Stone Theorem~\cite{ES46} (see also~\cite{ES66}) completely determined the value of $\pi(\mathcal{F})$ for graph families.
However, determining $\pi(\mathcal{F})$ for $r$-graphs with $r \ge 3$ is notoriously difficult, with only a few exact results known. 
One classical open problem in the field is Tur\'{a}n's famous tetrahedron conjecture from the 1940s, which seeks to determine the Tur\'{a}n density of the complete $3$-graphs on $4$ vertices $K_{4}^{3}$. 
For an overview of results up to 2011, we refer the reader to the excellent survey by Keevash~\cite{Kee11}.

To gain a better understanding of Tur\'{a}n problems and also provide an important tool for solving them, Simonovits~\cite{Sim68} initiated the study of the structure of near-extremal constructions by showing that every $K_{\ell+1}$-free graph whose average degree is close to extremal must be structurally close to being $r$-partite. 
Later, in a seminal work~\cite{AES74}, Andr\'{a}sfai--Erd\H{o}s--S\'{o}s showed that for $\ell \ge 2$, every $K_{\ell+1}$-free graph $G$ on $n$ vertices with minimum degree greater than $\frac{3\ell-4}{3\ell-1} n$ must be $\ell$-partite. Moreover, the bound $\frac{3\ell-4}{3\ell-1} n$ is tight. 
It worth noting that the Andr\'{a}sfai--Erd\H{o}s--S\'{o}s Theorem implies both the Tur\'{a}n Theorem and Simonovits' stability theorem (see remarks in~{\cite[Section~1.1]{LMR23unif}}). 

Extensions of the Andr\'{a}sfai--Erd\H{o}s--S\'{o}s Theorem to hypergraphs appear to be more challenging, as hypergraph extremal constructions can exhibit much richer structures (see e.g.~\cite{PI14,LP22,BCL22}).  
The first result of this type for hypergraphs appears to be the work F\"{u}redi--Simonovits~\cite{FS05} (see also~\cite{KS05}), who extended the celebrated result of De Caen--F\"{u}redi~\cite{DF00} by proving that for large $n$, if an $n$-vertex $3$-graph does not contain the Fano plane and has minimum degree greater than $(3/8-\varepsilon)n^2$ for some small constant $\varepsilon > 0$, then it must be bipartite. 
Similar results for other hypergraphs were obtained later in works such as~\cite{FPS06Book}. 
Very recently, general criteria for a hypergraph family $\mathcal{F}$ to exhibit Andr\'{a}sfai--Erd\H{o}s--S\'{o}s-type stability were established in~\cite{LMR23unif,HLZ24,CL24}. 
However, as far as we are aware, no tight Andr\'{a}sfai--Erd\H{o}s--S\'{o}s-type results had been obtained for hypergraphs prior to our work. 

We consider the extension of the Andr\'{a}sfai--Erd\H{o}s--S\'{o}s Theorem to hypergraph triangles. 
In the 1960s, as a way of extending Tur\'{a}n's theorem on triangles (also known as the Mantel Theorem~\cite{Man07}) to hypergraphs, Katona proposed the problem of determining the maximum number of edges in an $n$-vertex $3$-graph that avoids three edges $A, B, C$ such that the symmetric difference of $A$ and $B$ is contained in $C$ (also known as cancellative $3$-graphs). 
Note that this is equivalent to determining the value of $\mathrm{ex}(n,\{K_{4}^{3-}, F_{5}\})$, where $K_{4}^{3-}$ is the $4$-vertex $3$-graph with edge set $\{abc, abd, acd\}$ and $F_5$ is the $5$-vertex $3$-graph with edge set $\{abc, abd, cde\}$. 
Bollob\'{a}s~\cite{Bol74} solved this problem by proving that the extremal construction for $\mathrm{ex}(n,\{K_{4}^{3-}, F_{5}\})$ is balanced complete $3$-partite $3$-graph on $n$ vertices $T_{3}(n,3)$. 
Later, Frankl--F\"{u}redi~\cite{FF83F5} strengthened Bollob\'{a}s's theorem by showing that for $n \ge 3000$, $\mathrm{ex}(n,F_5) = |T_{3}(n,3)|$, thereby establishing the first tight bound for the Tur\'{a}n number of a single hypergraph. 
Their result was further refined in subsequent works such as~\cite{KM04Cancel,Goldwasser}. 

In~\cite{KM04Cancel}, Keevash--Mubayi proved that for large $n$, every $n$-vertex $F_5$-free $3$-graphs with average degree at least $(1/9-o(1))n^2$ is structurally close to being $3$-partite,  thus establishing the first Simonovits-type stability theorem for hypergraphs. Their result was later improved in~\cite{Liu21Cancel}. 
The Andr\'{a}sfai--Erd\H{o}s--S\'{o}s-type theorem for $F_5$ was establish recently in~\cite{LMR23unif,HLZ24}$\colon$
There exists a constant $\varepsilon>0$ such that for large $n$, every $n$-vertex $F_5$-free $3$-graphs with minimum degree greater than $(1/9-\varepsilon)n^2$ is $3$-partite. 
Unfortunately, the general method used in~\cite{LMR23unif,HLZ24} is unlikely to yield an optimal value for $\varepsilon$, and hence,  no explicit value for $\varepsilon$ was provided in these works. 

Using a very different approach, we determine the optimal value for $\varepsilon$ in the following theorem, and thus establishing the first tight Andr\'{a}sfai--Erd\H{o}s--S\'{o}s theorem for hypergraphs. 
\begin{theorem}\label{THM:main-AES-F5}
    For $n \ge 5000$, every $n$-vertex $F_5$-free $3$-graph with $\delta(\mathcal{H}) > \frac{4n^2}{45}$ is $3$-partite.
\end{theorem}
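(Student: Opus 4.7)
The plan is to argue by contradiction: suppose $\mathcal{H}$ is an $F_5$-free $3$-graph on $n \ge 5000$ vertices with $\delta(\mathcal{H}) > 4n^2/45$ that is not $3$-partite, and derive a contradiction. First I would invoke the Simonovits-type stability from \cite{KM04Cancel,Liu21Cancel}, which already guarantees that $\mathcal{H}$ is structurally close to a balanced complete $3$-partite $3$-graph (indeed, the weak Andr\'asfai--Erd\H{o}s--S\'os bound $(1/9-\varepsilon)n^2$ of \cite{LMR23unif,HLZ24} can also be used as a black box, even though we need to push beyond it). Fix a $3$-partition $V(\mathcal{H}) = V_1 \cup V_2 \cup V_3$ that maximizes the number of crossing edges, let $\mathcal{B}$ be the set of non-crossing (``bad'') edges and let $\mathcal{M}$ be the set of crossing triples missing from $\mathcal{H}$. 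Stability gives $|\mathcal{B}|, |\mathcal{M}| = o(n^3)$, and maximality of the partition yields the standard local-shift property: no single vertex can be re-assigned to a different part without losing crossing edges.

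The central tool is a precise local analysis around bad edges via $F_5$-freeness. The key structural fact is: for any pair $\{x,y\}$ of co-degree at least $2$, its common link $N(x,y) := \{z : \{x,y,z\} \in \mathcal{H}\}$ is nearly independent in $\mathcal{H}$ --- for any distinct $p, q \in N(x,y)$, every edge of $\mathcal{H}$ containing $\{p,q\}$ must also contain $x$ or $y$, so the co-degree of $\{p,q\}$ is at most $2$. Since $\mathcal{H}$ is not $3$-partite, $\mathcal{B} \ne \emptyset$; pick a bad edge $e = \{a,b,c\} \in \mathcal{B}$, say with $a,b \in V_1$. Applying the structural fact to the pair $\{a,b\}$ (and symmetrically to $\{a,c\}$, $\{b,c\}$) shows that a linear-in-$n$ number of crossing triples through $a$, $b$, or $c$ must be absent from $\mathcal{H}$ and hence belong to $\mathcal{M}$. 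Quantifying this, I would obtain a lower bound on $|\mathcal{M}|$ in terms of $|\mathcal{B}|$, the co-degrees $d(a,b), d(a,c), d(b,c)$, and the part sizes $|V_1|, |V_2|, |V_3|$.

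Finally, converting this into a degree-deficit statement should give the contradiction: the missing crossing triples concentrated around $a, b, c$ (together with the optimality of the partition, which caps how much the bad edges can help) force one of these vertices to have degree at most $4n^2/45$, contradicting the minimum-degree hypothesis. The main obstacle is pinning down the exact constant $4/45$ in this trade-off. Prior works settled for $(1/9 - \varepsilon)$ with non-explicit $\varepsilon$ because their general-family criteria and regularity-type arguments inherently lose constant factors; matching the boundary value $4/45$ requires a tight case analysis on where the bad edges live (inside one part versus split between two parts) and on the co-degrees involved, combined with identifying the precise extremal configuration -- presumably a carefully chosen perturbation of $T_3(n,3)$ by a small ``near-cancellative'' gadget that saturates $4n^2/45$ -- and ruling it out for non-$3$-partite $\mathcal{H}$. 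I expect the bookkeeping of this case analysis, rather than any single conceptual step, to be the hard technical core.
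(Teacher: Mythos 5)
Your plan is built around the idea that a minimum-degree threshold of $4n^2/45$ puts $\mathcal{H}$ in the stability regime around $T_3(n,3)$, and that the borderline configuration is some small perturbation of $T_3(n,3)$. Both assumptions fail. The minimum-degree condition $\delta(\mathcal{H}) > 4n^2/45$ is well \emph{below} the density of $T_3(n,3)$: a $4n^2/45$-regular $F_5$-free $3$-graph has only about $4n^3/135$ edges, which is $80\%$ of $|T_3(n,3)| \approx n^3/27$, so the Simonovits-type stability of Keevash--Mubayi (which requires average degree $(1/9 - o(1))n^2$) simply does not apply. Likewise, the AES-type result with threshold $(1/9-\varepsilon)n^2$ from~\cite{LMR23unif,HLZ24} has no explicit $\varepsilon$, and if their $\varepsilon$ were $\ge 1/45$ the whole theorem would already follow; so you cannot use it as a black box, and ``pushing beyond it'' is precisely the content you have not supplied. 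Most seriously, the near-extremal object at $4n^2/45$ is \emph{not} a perturbation of $T_3(n,3)$: it is the blowup $W_5^3\bigl[\tfrac{n}{3},\tfrac{2n}{15},\dots,\tfrac{2n}{15}\bigr]$ of the $3$-uniform $5$-wheel, a $6$-part structure with a hub and five rim classes, $4n^2/45$-regular and far from $3$-partite. Any ``bad edge / missing crossing triple'' bookkeeping anchored to a maximum-cut $3$-partition will not see this structure at all, and there is no obvious way to make it.

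The paper takes a genuinely different route precisely to avoid this. It first proves (Proposition~\ref{PROP:F5-shadow-no-K4}) that under $\delta(\mathcal{H}) > (1/12+\varepsilon)n^2$ the shadow $\partial\mathcal{H}$ is $K_4$-free; this is a link-counting argument using Tur\'an's theorem, Moon's theorem on vertex-disjoint $K_4$'s, and a supersaturation bound on $K_4$-counts, with no stability input. Then (Proposition~\ref{PROP:AES-shadow-K4}) it shows a $3$-graph with $K_4$-free shadow and $\delta > 4n^2/45$ is $3$-partite, via Lyle's structure theorem for maximal $K_4$-free graphs with high minimum degree, Jin's $\Gamma_d$-colorability theorem for dense triangle-free graphs, a vertex-extendability lemma (Lemma~\ref{LEMMA:U1-U2-U3-3-partite}), and two explicit infeasibility computations (Lemmas~\ref{LEMMA:opt-1} and~\ref{LEMMA:opt-2}) that are exactly tuned to rule out the $W_5^3$-blowup boundary case. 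The constant $4/45$ emerges from these optimization lemmas, not from a bad-edge trade-off. Your structural observation about $F_5$-freeness (that $N_{\mathcal{H}}(x,y)$ has small codegrees internally) is correct and a version of it does appear as Fact~\ref{FACT:2-links-cancellative}, but it is used inside the shadow/link framework rather than to count missing crossing triples against a fixed partition.
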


Using a standard blowup argument, the constraint $n\ge 5000$ can be eliminated for $\{K_{4}^{3-}, F_5\}$-free $3$-graphs. 
\begin{theorem}\label{THM:main-AES-K43-F5}
    Every $n$-vertex $\{K_{4}^{3-}, F_5\}$-free $3$-graph with $\delta(\mathcal{H}) > \frac{4n^2}{45}$ is $3$-partite. 
\end{theorem}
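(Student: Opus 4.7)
Since $\{K_{4}^{3-}, F_5\}$-freeness is stronger than $F_5$-freeness, Theorem~\ref{THM:main-AES-F5} already settles the range $n \ge 5000$, so the task is to dispose of small $n$ via a blow-up reduction. Given an $n$-vertex $\{K_{4}^{3-}, F_5\}$-free $3$-graph $\mathcal{H}$ with $\delta(\mathcal{H}) > 4n^2/45$, I will form the $t$-blowup $\mathcal{H}(t)$: vertex set $V(\mathcal{H}) \times [t]$, with $\{(u,i),(v,j),(w,k)\}$ an edge precisely when $u, v, w$ are pairwise distinct and $\{u,v,w\} \in \mathcal{H}$. I will pick $t$ large enough that $nt \ge 5000$ (e.g.\ $t = \lceil 5000/n \rceil$), and aim to apply Theorem~\ref{THM:main-AES-F5} to $\mathcal{H}(t)$.

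Two of the three ingredients needed are essentially bookkeeping. Each copy of $v$ has degree exactly $t^2 d_{\mathcal{H}}(v)$ in the blowup (for every edge of $\mathcal{H}$ through $v$ one independently chooses a copy of each other endpoint), so $\delta(\mathcal{H}(t)) = t^2 \delta(\mathcal{H}) > t^2 \cdot 4n^2/45 = 4(nt)^2/45$, meeting the minimum-degree hypothesis of Theorem~\ref{THM:main-AES-F5}. Moreover, a $3$-partition $W_1 \cup W_2 \cup W_3$ of $\mathcal{H}(t)$ restricts, via the map sending each $v \in V(\mathcal{H})$ to the class containing $(v,1)$, to a $3$-partition of $\mathcal{H}$: every $\{u,v,w\} \in \mathcal{H}$ lifts to $\{(u,1),(v,1),(w,1)\} \in \mathcal{H}(t)$, which must then meet all three $W_i$. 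Hence once I know $\mathcal{H}(t)$ is $F_5$-free, Theorem~\ref{THM:main-AES-F5} forces $\mathcal{H}(t)$ to be $3$-partite, and so is $\mathcal{H}$.

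The main point---and the only place where the extra $K_{4}^{3-}$ hypothesis is used---is preserving $F_5$-freeness under the blow-up; this is necessary because blow-ups of generic $F_5$-free $3$-graphs are not $F_5$-free. Suppose for contradiction that $\{abc, abd, cde\}$ is a copy of $F_5$ inside $\mathcal{H}(t)$, and let $p \colon V(\mathcal{H}(t)) \to V(\mathcal{H})$ be the projection. The three blowup edges force $p(a), p(b), p(c), p(d)$ to be pairwise distinct and additionally $p(e) \ne p(c), p(d)$. If $p(e) \notin \{p(a), p(b)\}$, then all five projections are distinct and already constitute an $F_5$ inside $\mathcal{H}$. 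Otherwise $p(e) \in \{p(a), p(b)\}$, and the three projected edges restricted to $\{p(a), p(b), p(c), p(d)\}$ use three of its four triples, exhibiting a $K_{4}^{3-}$ inside $\mathcal{H}$. Either outcome contradicts the hypothesis on $\mathcal{H}$, completing the reduction. I expect this projection argument to be the only genuine (if still elementary) step; the obstacle is conceptual rather than computational, namely recognising in advance that the $K_{4}^{3-}$-freeness assumption is exactly the extra ingredient required for blow-up preservation.
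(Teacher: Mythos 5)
Your proposal is correct and takes essentially the same route as the paper: a blow-up to a number of vertices at least $5000$, a direct computation that minimum degree scales by $t^2$, and an appeal to Theorem~\ref{THM:main-AES-F5}. The only difference is that you spell out the projection argument showing that the blow-up preserves $F_5$-freeness in the presence of $K_4^{3-}$-freeness, whereas the paper simply cites this fact from~\cite{LMR23unif}; your case analysis on $p(e)$ is correct and is indeed exactly where $K_4^{3-}$-freeness is used.
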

\textbf{Remark.}
The bound $\frac{4n^2}{45}$ is tight in both Theorems~\ref{THM:main-AES-F5} and~\ref{THM:main-AES-K43-F5}, as shown by the following construction. 

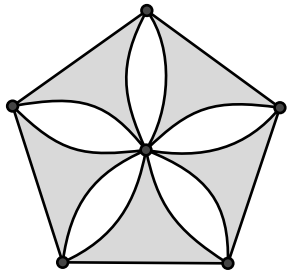
\begin{figure}[htbp]
\centering
\tikzset{every picture/.style={line width=1pt}} 

\begin{tikzpicture}[x=0.75pt,y=0.75pt,yscale=-0.8,xscale=0.8]
%
\draw [fill=uuuuuu, fill opacity=0.2, join = round]  (328.8,35.06) .. controls (346.33,59.4) and (343.33,98.4) .. (328.26,122.76) .. controls (356.33,95.4) and (370.33,90.4) .. (411.83,96.17) -- (328.8,35.06); 
\draw [fill=uuuuuu, fill opacity=0.2, join = round]  (328.26,122.76) .. controls (364.33,129.4) and (391.33,121.4) .. (411.83,96.17) -- (379.38,194.02) .. controls (380.33,156.4) and (367.33,138.4) .. (328.26,122.76); 
\draw [fill=uuuuuu, fill opacity=0.2, join = round]   (328.26,122.76) .. controls (322.33,155.4) and (309.33,178.4) .. (276.28,193.39) -- (379.38,194.02) .. controls (345.33,172.4) and (335.33,156.4) .. (328.26,122.76); 
\draw [fill=uuuuuu, fill opacity=0.2, join = round]  (328.26,122.76) .. controls (295.33,137.4) and (281.33,156.4) .. (276.28,193.39) -- (245.03,95.15) .. controls (275.33,125.4) and (289.33,127.4) .. (328.26,122.76);
\draw [fill=uuuuuu, fill opacity=0.2, join = round]  (245.03,95.15) .. controls (288.33,87.4) and (304.33,92.4) .. (328.26,122.76) .. controls (313.33,89.4) and (310.33,69.4) .. (328.8,35.06) -- (245.03,95.15);
\draw [fill=uuuuuu] (411.83,96.17) circle (2.5pt);
\draw [fill=uuuuuu] (379.38,194.02) circle (2.5pt);
\draw [fill=uuuuuu] (276.28,193.39) circle (2.5pt);
\draw [fill=uuuuuu] (245.03,95.15) circle (2.5pt);
\draw [fill=uuuuuu] (328.8,35.06) circle (2.5pt);
\draw [fill=uuuuuu] (328.26,122.76) circle (2.5pt);
\end{tikzpicture}
\caption{The $3$-uniform $5$-wheel $W_{5}^{3}$.} 
\label{Fig:W53}
\end{figure}

Let the $3$-uniform $5$-wheel $W_{5}^{3}$ be the $3$-graph on $6$ vertices with edge set 
\begin{align*}
    \{uv_1v_2, uv_2v_3, uv_3v_4, uv_4v_5, uv_5v_1\}. 
\end{align*}
Given a tuple $(x, y_1, \ldots, y_5)$ of integers, the blowup $W_{5}^{3}[x, y_1, \ldots, y_5]$ of $W_{5}^{3}$ is obtained by replacing $u$ with a set of size $x$, replacing $v_i$ with a set of size $y_i$ for $i\in [5]$, and replacing each edge with the corresponding complete $3$-partite $3$-graph.  
It is easy to see that every blowup of $W_{5}^{3}$ is $\{K_{4}^{3-}, F_5\}$-free and 
\begin{align*}
    \delta(W_{5}^{3}[x, y_1, \ldots, y_5])
    = \min\left\{\sum_{i\in [5]}y_{i}y_{i+1},~x(y_{1}+y_{3}), \ldots, x(y_{5}+y_{2})\right\}, 
\end{align*}
where the indices are taken modulo $5$. 

Let $n$ be an integer satisfying $n \equiv 0 \Mod{15}$, $(x, y_1, \ldots, y_5) \coloneqq \left(\frac{n}{3}, \frac{2n}{15}, \ldots, \frac{2n}{15}\right)$, and $\mathcal{G} \coloneqq W_{5}^{3}[x, y_1, \ldots, y_5]$. 
Simple calculations show that $\mathcal{G}$ has exactly $n$ vertices and $\delta(\mathcal{G}) = 4n^2/45$.
Since $\mathcal{G}$ is not $3$-partite, the bound $\delta(\mathcal{H}) > \frac{4n^2}{45}$ in both Theorems~\ref{THM:main-AES-F5} and~\ref{THM:main-AES-K43-F5} cannot be improved in general. 

The rest of the paper is organized as follows$\colon$
In the next section, we present some definitions and preliminary results. 
In Section~\ref{SEC:proof-F5}, we present the proofs for Theorems~\ref{THM:main-AES-F5} and~\ref{THM:main-AES-K43-F5}. 
The proofs for two key propositions for the proof of Theorem~\ref{THM:main-AES-F5} are presented in Sections~\ref{SEC:proof-F5-shadow-no-K4} and~\ref{SEC:proof-AES-shadow-K4},  respectively. 
Section~\ref{SEC:Remark} includes some remarks and open problems. 
 
\section{Preliminaries}\label{SEC:prelim}
%
\subsection{Graphs}
Given a graph $G$ and a vertex set $S\subseteq V(G)$, we use $G[S]$ to denote the \textbf{induced subgraph} of $G$ on $S$. 
For a vertex $v\in V(G)$, the \textbf{neighborhood} of $v$ in $G$ is defined as 
\begin{align*}
    N_{G}(v)
    \coloneqq \left\{u \in V(G) \colon \{u,v\} \in G\right\}. 
\end{align*}
For convenience, we set $N_{G}(v,S) \coloneqq N_{G}(v) \cap S$. 
We say that $S$ is \textbf{independent} if $G[S]$ has no edges. 
The \textbf{independence number} $\alpha(G)$ is the maximum size of an independent set in $G$. 
Given two disjoint sets $S_1, S_2 \subseteq V(G)$, the \textbf{induced bipartite subgraph} $G[S_1, S_2]$ consists of all edges in $G$ that have nonempty intersection with both $S_1$ and $S_2$. 

We say a graph $G$ is a \textbf{blowup} of another graph $H$ if $G$ can be obtained from $H$ by replacing each vertex with a set of vertices and  each edge with the corresponding complete bipartite graph. 
We say a map $\psi \colon V(G) \to V(H)$ is a \textbf{homomorphism} from $G$ to $H$ if $\psi(e)\in H$ for all $e\in G$. 
If such a homomorphism exists, we say $G$ is \textbf{$H$-colorable}. 
Note that $G$ is $H$-colorable iff $G$ is a subgraph of some blowup of $H$. 

The \textbf{join} $G  \uproduct H$ of two vertex-disjoint graphs $G$ and $H$ is the graph on $V(G) \cup V(H)$ with edge set 
\begin{align*}
    G\cup H\cup \left\{\{u,v\} \colon u\in V(G) \text{ and } v\in V(H)\right\}.
\end{align*}
We say a graph $G$ is \textbf{maximal $F$-free} if it is $F$-free but adding any new edge into $G$ would create a copy of $F$. 
\begin{theorem}[\cite{Lyl14}]\label{THM:Lyle-K4}
    Suppose that $G$ is a maximal $K_4$-free graph on $n$ vertices with $\delta(G) > 4n/7$. 
    Then either $\alpha(G) > 4\delta(G) - 2n$ or $G$ is the join of an independent set and a maximal triangle-free graph. 
\end{theorem}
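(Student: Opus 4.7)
The plan is to argue the contrapositive: assume $G$ is a maximal $K_4$-free graph with $\delta(G) > 4n/7$ and $\alpha(G) \le 4\delta(G) - 2n$, and show that $G$ decomposes as $A \uproduct H$ with $A$ an independent set and $H$ a maximal triangle-free graph.

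The starting point is a saturation property arising from maximal $K_4$-freeness: for every non-edge $\{u,v\}$ of $G$, the common neighborhood $N_G(u) \cap N_G(v)$ must contain an edge, since otherwise adding $\{u,v\}$ would preserve $K_4$-freeness. Two corollaries are immediate. For any edge $\{u,v\}$ of $G$, the set $N_G(u) \cap N_G(v)$ is independent (else $G$ already contains a $K_4$), so $|N_G(u) \cap N_G(v)| \le \alpha(G) \le 4\delta(G) - 2n$. For any non-edge $\{u,v\}$, inclusion-exclusion combined with $u, v \notin N_G(u) \cup N_G(v)$ gives $|N_G(u) \cap N_G(v)| \ge 2\delta(G) - n + 2 > 2n/7 + 2$.

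Let $I$ be a maximum independent set of $G$, so $|I| = \alpha(G)$. The central claim is that $I$ is complete to $V(G) \setminus I$, i.e., every $u \in V(G)\setminus I$ is adjacent to every $v \in I$. I would prove this by contradiction: supposing there exist $v \in I$ and $u \in V(G)\setminus I$ with $\{u,v\} \notin E(G)$, since $I$ is maximum $u$ has at least one neighbor in $I$, and by saturation there is an edge $\{x,y\}$ inside $N_G(u) \cap N_G(v)$. The common neighborhood $N_G(x) \cap N_G(y)$ is then an independent set of size $\le \alpha(G)$ containing both $u$ and $v$. The goal is to use this configuration, together with the lower bound on $|N_G(u) \cap N_G(v)|$ and the upper bound on $|N_G(x) \cap N_G(y)|$, to construct an independent set of size strictly exceeding $\alpha(G)$ by a swap---for instance, removing a small set of vertices from $I$ and adjoining $u$ and a chosen vertex $w$ from the common non-neighborhood of $u$ and $v$. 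The assumption $\delta(G) > 4n/7$ should furnish precisely the arithmetic slack needed for the net gain to be positive.

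Once the claim is established, setting $A := I$ and $H := G[V(G)\setminus I]$ gives $G = A \uproduct H$ directly. The graph $H$ is triangle-free, since any triangle in $H$ together with any vertex of $A$ would form a $K_4$ in $G$ (the case $A = \emptyset$ is degenerate and handled separately). Moreover, $H$ is maximal triangle-free: for any non-edge $\{x,y\}$ inside $V(H)$, adding $\{x,y\}$ to $G$ must create a $K_4$ by maximality of $G$; tracing the possible locations of the other two $K_4$-vertices and using that $A$ is independent and joined to $V(H)$ forces a triangle in $H + \{x,y\}$. The main obstacle is the swap argument itself, which is essentially tight: the extremal example, a blowup of $C_5$ joined with an independent set of size $2n/7$, attains $\delta(G) = 4n/7$ and $\alpha(G) = 4\delta(G) - 2n = 2n/7$ exactly, so every inequality must be controlled to the correct constant. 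I would probably extremize the choice of $\{u,v\}$ (for instance, maximizing $d(u) + d(v)$ among such non-edges), iterate the swap, or exploit the structure of $N_G(x) \cap N_G(y)$ around the edge $\{x,y\}$ to gain the extra vertex.
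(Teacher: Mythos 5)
The paper does not prove this theorem --- it is imported verbatim from Lyle's 2014 paper (\cite{Lyl14}), so there is no in-paper proof to compare your attempt against. Judged on its own terms, your proposal contains a genuine gap at exactly the point the theorem lives or dies.

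Your reduction is sound: the saturation facts (every non-edge has an edge in its common neighbourhood; every edge has an independent common neighbourhood) follow from maximal $K_4$-freeness, and your observation that once the maximum independent set $I$ is shown to be complete to $V(G)\setminus I$ the rest follows is correct --- $G[V\setminus I]$ is then triangle-free (a triangle plus a vertex of $I$ gives $K_4$), and for any non-edge $\{x,y\}$ inside $V\setminus I$ the $K_4$ created in $G+\{x,y\}$ cannot use two vertices of $I$ (independence), so it must use a vertex $q\in V\setminus I$ adjacent to both $x$ and $y$, giving a triangle in $G[V\setminus I]+\{x,y\}$; maximality of $G[V\setminus I]$ follows. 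But the central claim --- that $I$ is complete to $V\setminus I$ --- is only asserted, not proved. You describe a configuration (a non-adjacent pair $u\in V\setminus I$, $v\in I$, an edge $\{x,y\}$ in their common neighbourhood, the independent set $N(x)\cap N(y)\ni u,v$) and then write ``the main obstacle is the swap argument itself,'' listing several directions you ``would probably'' try (extremize the choice of $\{u,v\}$, iterate the swap, exploit the structure of $N(x)\cap N(y)$). None of these is carried out, and the actual inequality that would drive a contradiction is never written down. This is not a small finishing step: it is the entire content of the theorem, and the constants are tight (as your own extremal example shows), so a hand-wave is not enough to trust that a bookkeeping argument closes.

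Two smaller points. First, there is an arithmetic slip: with $\delta(G)>4n/7$, a non-edge $\{u,v\}$ gives $|N(u)\cap N(v)|\ge 2\delta(G)-(n-2)>n/7+2$, not $2n/7+2$ as you wrote. Second, it is worth flagging that your claim ``the maximum independent set is complete to its complement'' is strictly stronger than the theorem's conclusion (which only asserts $G$ is \emph{some} join $A \uproduct H$, with no requirement that $A$ be a maximum independent set). Your stronger claim appears consistent with the hypotheses, but proving the stronger claim directly is harder and is precisely what you have not done. The proposal is thus a plausible strategy outline rather than a proof.
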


\begin{figure}[htbp]
\centering
\tikzset{every picture/.style={line width=0.75pt}} 

\begin{tikzpicture}[x=0.68pt,y=0.68pt,yscale=-1,xscale=1]

\draw   (127.9,89.55) .. controls (127.9,61.08) and (150.98,38) .. (179.45,38) .. controls (207.92,38) and (231,61.08) .. (231,89.55) .. controls (231,118.02) and (207.92,141.1) .. (179.45,141.1) .. controls (150.98,141.1) and (127.9,118.02) .. (127.9,89.55) -- cycle ;
\draw   (259.9,88.55) .. controls (259.9,60.08) and (282.98,37) .. (311.45,37) .. controls (339.92,37) and (363,60.08) .. (363,88.55) .. controls (363,117.02) and (339.92,140.1) .. (311.45,140.1) .. controls (282.98,140.1) and (259.9,117.02) .. (259.9,88.55) -- cycle ;
\draw   (390.9,89.55) .. controls (390.9,61.08) and (413.98,38) .. (442.45,38) .. controls (470.92,38) and (494,61.08) .. (494,89.55) .. controls (494,118.02) and (470.92,141.1) .. (442.45,141.1) .. controls (413.98,141.1) and (390.9,118.02) .. (390.9,89.55) -- cycle ;
\draw   (520.9,90.55) .. controls (520.9,62.08) and (543.98,39) .. (572.45,39) .. controls (600.92,39) and (624,62.08) .. (624,90.55) .. controls (624,119.02) and (600.92,142.1) .. (572.45,142.1) .. controls (543.98,142.1) and (520.9,119.02) .. (520.9,90.55) -- cycle ;
\draw    (29.4,93.1) -- (90.4,93.1) ;
\draw [fill=uuuuuu] (29.4,93.1) circle (1.5pt);
\draw [fill=uuuuuu] (90.4,93.1) circle (1.5pt);
\draw [fill=uuuuuu] (228.75,75.41) circle (1.5pt);
\draw [fill=uuuuuu] (208.13,132.07) circle (1.5pt);
\draw [fill=uuuuuu] (147.88,129.96) circle (1.5pt);
\draw [fill=uuuuuu] (131.26,72.01) circle (1.5pt);
\draw [fill=uuuuuu] (181.24,38.29) circle (1.5pt);
\draw    (363,88.55) --  (259.9,88.55) ;
\draw    (347.9,125) --  (275,52.1) ;
\draw    (311.45,140.1) --  (311.45,37) ;
\draw    (275,125) --  (347.9,52.1) ;
\draw [fill=uuuuuu] (363,88.55) circle (1.5pt);
\draw [fill=uuuuuu] (347.9,125) circle (1.5pt);
\draw [fill=uuuuuu] (311.45,140.1) circle (1.5pt);
\draw [fill=uuuuuu] (275,125) circle (1.5pt);
\draw [fill=uuuuuu] (259.9,88.55) circle (1.5pt);
\draw [fill=uuuuuu] (275,52.1) circle (1.5pt);
\draw [fill=uuuuuu] (311.45,37) circle (1.5pt);
\draw [fill=uuuuuu] (347.9,52.1) circle (1.5pt);
\draw    (494,89.55) --  (408.69,128.51) ;
\draw    (485.82,117.42) --  (392.99,104.07) ;
\draw    (463.87,136.44) --  (392.99,75.03) ;
\draw    (435.12,140.57) --  (408.69,50.59) ;
\draw    (408.69,128.51) --  (435.12,38.52) ;
\draw    (392.99,104.07) --  (463.87,42.66) ;
\draw    (392.99,75.03)  --  (485.82,61.68) ;
\draw    (408.69,50.59) --  (494,89.55) ;
\draw    (435.12,38.52) --  (485.82,117.42) ;
\draw    (463.87,42.66) --  (463.87,136.44) ;
\draw    (485.82,61.68) --  (435.12,140.57) ;
%
%
\draw [fill=uuuuuu] (494,89.55) circle (1.5pt);
\draw [fill=uuuuuu] (485.82,117.42) circle (1.5pt);
\draw [fill=uuuuuu] (463.87,136.44) circle (1.5pt);
\draw [fill=uuuuuu] (435.12,140.57) circle (1.5pt);
\draw [fill=uuuuuu] (408.69,128.51) circle (1.5pt);
\draw [fill=uuuuuu] (392.99,104.07) circle (1.5pt);
\draw [fill=uuuuuu] (392.99,75.03) circle (1.5pt);
\draw [fill=uuuuuu] (408.69,50.59) circle (1.5pt);
\draw [fill=uuuuuu] (435.12,38.52) circle (1.5pt);
\draw [fill=uuuuuu] (463.87,42.66) circle (1.5pt);
\draw [fill=uuuuuu] (485.82,61.68) circle (1.5pt);
\draw   (624,90.55) -- (560.98,140.8) ;
\draw   (618.9,112.91) -- (540.31,130.85) ;
\draw   (604.59,130.85) -- (526.01,112.91) ;
\draw   (583.92,140.8) -- (520.9,90.55) ;
\draw   (560.98,140.8) -- (526.01,68.18) ;
\draw   (540.31,130.85)  -- (540.31,50.25) ;
\draw   (526.01,112.91) -- (560.98,40.29) ;
\draw   (520.9,90.55) -- (583.92,40.29) ;
\draw   (526.01,68.18)  -- (604.59,50.25) ;
\draw   (540.31,50.25)  -- (618.9,68.18) ;
\draw   (560.98,40.29) -- (624,90.55) ;
\draw   (583.92,40.29) -- (618.9,112.91) ;
\draw   (604.59,50.25) -- (604.59,130.85)  ;
\draw   (618.9,68.18) -- (583.92,140.8) ;
\draw   (624,90.55) -- (520.9,90.55) ;
\draw   (618.9,112.91) -- (526.01,68.18) ;
\draw   (604.59,130.85) -- (540.31,50.25) ;
\draw   (583.92,140.8) -- (560.98,40.29) ;
\draw   (560.98,140.8) -- (583.92,40.29) ;
\draw   (540.31,130.85)  -- (604.59,50.25) ;
\draw   (526.01,112.91) -- (618.9,68.18) ;
%
\draw [fill=uuuuuu] (624,90.55) circle (1.5pt);
\draw [fill=uuuuuu] (618.9,112.91) circle (1.5pt);
\draw [fill=uuuuuu] (604.59,130.85) circle (1.5pt);
\draw [fill=uuuuuu] (583.92,140.8) circle (1.5pt);
\draw [fill=uuuuuu] (560.98,140.8) circle (1.5pt);
\draw [fill=uuuuuu] (540.31,130.85) circle (1.5pt);
\draw [fill=uuuuuu] (526.01,112.91) circle (1.5pt);
\draw [fill=uuuuuu] (520.9,90.55) circle (1.5pt);
\draw [fill=uuuuuu] (526.01,68.18) circle (1.5pt);
\draw [fill=uuuuuu] (540.31,50.25) circle (1.5pt);
\draw [fill=uuuuuu] (560.98,40.29) circle (1.5pt);
\draw [fill=uuuuuu] (583.92,40.29) circle (1.5pt);
\draw [fill=uuuuuu] (604.59,50.25) circle (1.5pt);
\draw [fill=uuuuuu] (618.9,68.18) circle (1.5pt);
\end{tikzpicture}
\caption{The graphs $\Gamma_1, \Gamma_2, \Gamma_3, \Gamma_4, \Gamma_5$.} 
\label{Fig:Gamma1-5}
\end{figure}
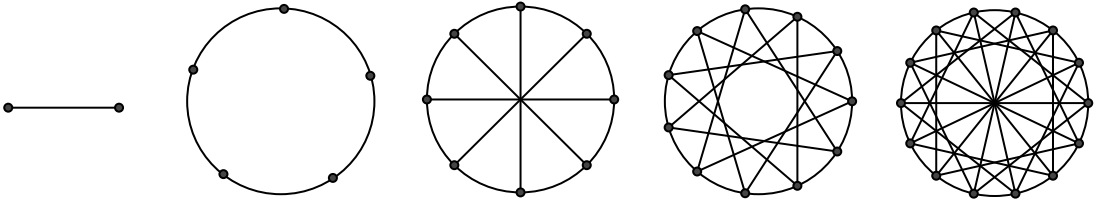

For every integer $d \ge 1$, let $\Gamma_{d}$ denote the graph on $[3d-1]$ with the edge set (see Figure~\ref{Fig:Gamma1-5}) 
\begin{align*}
    \left\{\{i,i+1\}, \{i, i+4\}, \ldots, \{i, i+3\lceil d/2 \rceil -2\} \Mod{3d-1} \colon i \in [3d-1]\right\}. 
\end{align*}
It is easy to verify that $\Gamma_{d}$ is $d$-regular and $\Gamma_{i} \subseteq \Gamma_{d}$ for every $i \le d$.

\begin{theorem}[\cite{Jin93}]\label{THM:Jin93}
    Let $d \in [1,9]$ be an integer. 
    Suppose that $G$ is a triangle-free graph on $n$ vertices with $\delta(G) > \frac{d+1}{3d+2}n$. 
    Then $G$ is $\Gamma_{d}$-colorable. 
\end{theorem}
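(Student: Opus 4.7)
The plan is to construct an explicit homomorphism $\psi \colon V(G) \to V(\Gamma_d)$ from the structural constraints imposed on $G$ by triangle-freeness together with the minimum-degree hypothesis, arguing by induction on $d$. The base case $d=1$ is the classical Andr\'asfai--Erd\H{o}s--S\'os theorem itself: if $G$ is triangle-free with $\delta(G) > 2n/5$, then $G$ is bipartite, i.e., $\Gamma_1$-colorable. The standard proof of this base case picks a shortest odd cycle $C$ (necessarily of length $\geq 5$) and derives a contradiction by counting how much of $V(G)$ the neighborhoods of consecutive vertices on $C$ must cover.

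For the inductive step I would first use the fundamental observation that in any triangle-free graph, every neighborhood $N_G(v)$ is an independent set; consequently, whenever $u,v$ share a common neighbor they are non-adjacent, and a pair $u,v$ at distance at most $2$ satisfies strong quantitative relations between $|N(u)\cap N(v)|$ and $|N(u)\triangle N(v)|$ via inclusion--exclusion and the minimum-degree bound. I would then reduce to the case where no two vertices have identical neighborhoods by passing to the quotient under the equivalence relation $u \sim v \iff N(u)=N(v)$; this quotient preserves triangle-freeness and its minimum-degree ratio, and $G$ is $\Gamma_d$-colorable iff its quotient is.

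Next I would argue via a neighborhood-covering argument, applied to a maximum independent set or to $N(v)$ for a carefully chosen $v$, that in the reduced graph the number of equivalence classes is at most $3d-1$. The idea is that a vertex $v$ of minimum degree has $|N(v)| > \tfrac{d+1}{3d+2}n$ independent neighbors, so by double-counting incidences $(x,y)$ with $x \in N(v)$ and $y \in N(x)$, most vertices outside $N(v)$ are forced to realize one of only a bounded number of distinct traces on $N(v)$. Iterating this process and tracking how the emerging classes are linked by edges produces a partition whose quotient graph is $d$-regular and triangle-free on at most $3d-1$ vertices, which one then matches against the circulant structure of $\Gamma_d$ to define $\psi$.

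The main obstacle is controlling this iterative partition process and identifying the quotient precisely with $\Gamma_d$, which is where the restriction $d \leq 9$ enters: for larger $d$, alternative $d$-regular triangle-free vertex-transitive graphs (notably blowups of $\Gamma_{d'}$ with $d'>d$) are compatible with the minimum-degree bound, and ruling them out requires substantially more delicate invariants, as evidenced by the fact that the extension to all $d$ (due to Brandt--Thomass\'e) demands considerable additional work. A secondary technical hurdle is the ``wrap-around'' step: once a natural linear ordering of the classes is produced by the iteration, one must verify that the cyclic adjacency pattern of $\Gamma_d$ is realized, which requires a short case analysis for each small $d \in \{2,\ldots,9\}$.
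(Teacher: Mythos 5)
This statement is a known theorem of Jin cited from \cite{Jin93}; the paper does not contain its own proof, so there is nothing in the source to compare your argument against line by line. Taken on its own terms, your proposal is a high-level strategy sketch rather than a proof, and you acknowledge as much (``the main obstacle is controlling this iterative partition process\ldots''). That acknowledged gap is the whole content of Jin's theorem, so the proposal does not establish the result.

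Beyond incompleteness, there are two substantive errors. First, the twin reduction (quotient under $N(u)=N(v)$) does not by itself bound the number of classes by $3d-1$: a proper subgraph of a blowup of $\Gamma_d$ can have many distinct neighborhood types within a single blowup class. The standard route (used by Jin and later by Brandt and Brandt--Thomass\'{e}) first passes to a \emph{maximal} triangle-free graph, where twin classes become the closed neighborhood classes and the reduced graph has a much more rigid structure; without this step the ``bounded number of traces'' argument has no footing. Second, your explanation of why the restriction $d \le 9$ is needed is wrong: the balanced blowup of $\Gamma_{d'}$ on $n$ vertices has minimum degree $\frac{d'}{3d'-1}n$, and since $t \mapsto \frac{t}{3t-1}$ is decreasing, for $d' > d$ this is \emph{at most} $\frac{d+1}{3d+2}n$, so such blowups do not satisfy the hypothesis and are not the obstruction. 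The genuine obstruction for larger $d$ involves sporadic dense triangle-free graphs (related to the Petersen and Vega-type constructions) that are not Andr\'asfai-colorable, which is a different and more delicate phenomenon. To turn your sketch into a proof you would need to (i) reduce to maximal triangle-free graphs, (ii) carry out the twin reduction there, (iii) establish a degree/size dichotomy that forces the reduced graph to have at most $3d-1$ vertices and be $d$-regular, and (iv) classify $d$-regular maximal triangle-free graphs on $3d-1$ vertices for $d \le 9$; steps (iii) and (iv) are exactly where the substantial work lies and are currently absent.
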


\begin{theorem}[\cite{Moon68}]\label{THM:Moon68}
    Let $t \ge 1$ and $n \ge \frac{49 t + 21}{2}$ be integers. Suppose that $G$ is an $n$-vertex graph without $t+1$ pairwise vertex-disjoint copies of $K_4$. Then 
    \begin{align*}
        |G| 
        \le \binom{t}{2} + t(n-t) + \frac{(n-t)^2}{3}
        \le \frac{n^2}{3} + \frac{tn}{3}. 
    \end{align*}
\end{theorem}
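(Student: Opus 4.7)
The bound $f(n,t) \le n^2/3 + tn/3$, where $f(n,t) \coloneq \binom{t}{2} + t(n-t) + (n-t)^2/3$, is immediate from the algebraic identity $f(n,t) = n^2/3 + tn/3 - t(t+3)/6$; the main task is the first inequality $|G| \le f(n,t)$.

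I would induct on $n$. A direct calculation yields $f(n,t) - f(n-1,t) = (2n+t-1)/3$. This suggests a minimum-degree dichotomy: if $G$ has a vertex $v$ with $d_G(v) \le (2n+t-1)/3$, then $G - v$ is an $(n-1)$-vertex graph still with no $t+1$ vertex-disjoint copies of $K_4$, so by induction $|G-v| \le f(n-1,t)$, giving $|G| \le f(n-1,t) + (2n+t-1)/3 = f(n,t)$ as needed. The remaining case is $\delta(G) > (2n+t-1)/3$.

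In this high-degree case, the plan is to produce $t+1$ vertex-disjoint copies of $K_4$, contradicting the hypothesis. Since $(2n+t-1)/3 > 2n/3$ for $t \ge 1$ and any $3$-partite graph has minimum degree at most $2n/3$, the graph $G$ is not $3$-partite; the Andr\'{a}sfai--Erd\H{o}s--S\'{o}s theorem for $K_4$ (any $K_4$-free graph with $\delta > 5n/8$ is $3$-partite) then forces $G$ to contain a $K_4$. Removing its four vertices drops $n$ by $4$ and $\delta$ by at most $4$, and one iterates this extraction. To carry the iteration through $t+1$ rounds, the minimum degree must remain above the AES threshold for the surviving graph at every stage, and the hypothesis $n \ge (49t+21)/2$ is calibrated precisely for this.

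The main obstacle is the bookkeeping in this iteration. Each round loses at most $4$ from $\delta$ while the AES threshold $5n/8$ shrinks by only $5/2$, so the residual slack $\delta - 5n/8$ decreases by $3/2$ per round. Unwinding $t+1$ such rounds while keeping $\delta$ above threshold forces a linear-in-$t$ lower bound on $n$, whose constant is determined by this slack-accounting. Matching the exact value $(49t+21)/2$ likely requires either a sharper per-step existence criterion for a $K_4$ (e.g.\ the Tur\'{a}n bound $|G| > \lfloor n^2/3 \rfloor$ in place of the AES degree bound) or a clever choice of which $K_4$ to extract at each step so as to minimize the degree loss; carrying out this optimization to the exact constant is the technical heart of the proof.
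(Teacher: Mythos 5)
Your scaffolding (delete a low-degree vertex and recurse, or else exploit high minimum degree) is a sensible opening, and the algebra $f(n,t)-f(n-1,t)=\tfrac{2n+t-1}{3}$ is correct, but the high-degree branch is unsalvageable as stated. The extremal configuration $K_t \uproduct T$, where $T$ is the complete balanced $3$-partite (hence $K_4$-free) graph on the remaining $n-t$ vertices, lies squarely inside your ``high-degree'' regime: when $3$ divides $n-t$, every vertex of $T$ has degree $t + \tfrac{2(n-t)}{3} = \tfrac{2n+t}{3}$, so $\delta(K_t \uproduct T) = \tfrac{2n+t}{3} > \tfrac{2n+t-1}{3}$. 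Yet this graph has no $t+1$ pairwise disjoint copies of $K_4$, because every $K_4$ in it must use a vertex of the $K_t$ side. So the plan of reaching a contradiction by extracting $t+1$ disjoint $K_4$'s is impossible in principle: the high-degree case genuinely occurs (it is realized by the extremal graph itself) and must be handled by a direct structural edge count --- typically by isolating a small set $S$ hitting every $K_4$, applying Tur\'an to $G-S$, and separately bounding the edges meeting $S$ --- not by a nonexistence claim.

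Two secondary issues compound this. Your slack bookkeeping tracks $\delta - \tfrac{5n}{8}$, but the constraint that actually binds in your own chain of implications is $\delta > \tfrac{2n}{3}$ (needed to contradict $3$-partiteness); measured against that threshold the initial slack is only about $\tfrac{t-1}{3}$ and drops by $\tfrac{4}{3}$ per extraction, so greedy removal of $K_4$'s stalls after roughly $t/4$ rounds no matter how large $n$ is --- the ``technical heart'' you defer is not a constant-optimization, it is a failure of the method. And the induction on $n$ lacks a base case: once $n-1 < \tfrac{49t+21}{2}$ the inductive hypothesis is unavailable, and the conclusion genuinely fails for small $n$ (for instance $K_{4t+3}$ has $\binom{4t+3}{2}$ edges, exceeds the claimed bound at $n=4t+3$, and contains no $t+1$ disjoint $K_4$'s), so the base case would itself require a real argument.
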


Given two graphs $F$ and $G$, we use $N(F, G)$ to denote the number of copies of $F$ in $G$. 
\begin{theorem}[\cite{MM62}]\label{THM:MM62}
    Let $\gamma \in [1/3,1/2)$ be a real number. 
    Suppose that $G$ is an $n$-vertex graph with at least $\gamma n^2$ edges. 
    Then 
    \begin{align*}
        N(K_4, G) 
        \ge \frac{\gamma (4\gamma - 1) (3\gamma - 1)}{6} n^4. 
    \end{align*}
\end{theorem}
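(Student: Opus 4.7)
The plan is to derive the bound by two successive applications of a Kruskal--Moser-type clique recursion. Writing $k_s(G)$ for the number of copies of $K_s$ in $G$, the main tool is the inequality
\begin{equation*}
\frac{k_{s+1}(G)}{k_s(G)} \;\ge\; \frac{1}{s^2 - 1}\left( s^2 \cdot \frac{k_s(G)}{k_{s-1}(G)} - n \right), \qquad s \ge 2,
\end{equation*}
which I would first establish as a lemma (or quote directly from \cite{MM62}). Granted this, the remainder is a short calculation. From $k_1(G) = n$ and $k_2(G) \ge \gamma n^2$, the case $s = 2$ gives $k_3(G)/k_2(G) \ge (4\gamma - 1) n / 3$, so that $k_3(G) \ge \gamma (4\gamma - 1) n^3 / 3$. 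The case $s = 3$ then gives
\begin{equation*}
\frac{k_4(G)}{k_3(G)} \;\ge\; \frac{9 \cdot \tfrac{(4\gamma - 1) n}{3} - n}{8} \;=\; \frac{(3\gamma - 1) n}{2},
\end{equation*}
and multiplying the two bounds yields the desired $N(K_4, G) = k_4(G) \ge \gamma(4\gamma - 1)(3\gamma - 1) n^4 / 6$. The hypothesis $\gamma \in [1/3, 1/2)$ ensures that both $4\gamma - 1$ and $3\gamma - 1$ are nonnegative, so the bound is well-posed.

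The substantive work is therefore the clique recursion. The base case $s = 2$ is concrete: each edge $\{u,v\}$ lies in at least $d_G(u) + d_G(v) - n$ triangles, so
\begin{equation*}
3\, k_3(G) \;\ge\; \sum_{uv \in G} \bigl( d_G(u) + d_G(v) - n \bigr) \;=\; \sum_{v \in V(G)} d_G(v)^2 - n\, k_2(G),
\end{equation*}
and applying $\sum_v d_G(v)^2 \ge (\sum_v d_G(v))^2 / n = 4 k_2(G)^2 / n$ recovers exactly the $s = 2$ case. For general $s$ I would argue by induction using the identity $(s+1)\, k_{s+1}(G) = \sum_{v \in V(G)} k_s\bigl(G[N_G(v)]\bigr)$, applying the inductive hypothesis inside each neighborhood $G[N_G(v)]$ (which has $d_G(v)$ vertices), and then invoking Cauchy--Schwarz in the form $\sum_v a_v^2 / b_v \ge (\sum_v a_v)^2 / \sum_v b_v$ to collapse the neighborhood quantities back to the global counts $k_s(G), k_{s-1}(G), n$.

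The main obstacle is precisely this inductive step: controlling the global error term $-n$ that appears when one passes from sums over neighborhoods to global clique counts. The Cauchy--Schwarz reduction is the crucial combinatorial content, and it is exactly the Moon--Moser observation of~\cite{MM62}; once it is in hand, the theorem follows mechanically from the two-line computation above.
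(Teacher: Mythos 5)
The paper does not reprove this statement --- it is Theorem~\ref{THM:MM62}, cited directly from Moon--Moser~\cite{MM62} --- so there is no ``paper's own proof'' to compare against. Your high-level plan is exactly the standard derivation: quote the Moon--Moser clique recursion
\[
\frac{k_{s+1}(G)}{k_s(G)} \;\ge\; \frac{1}{s^2-1}\Bigl(s^2\,\frac{k_s(G)}{k_{s-1}(G)} - n\Bigr),
\]
apply it at $s=2$ and $s=3$, and chain. Your chaining arithmetic is correct (the bound $k_4 \ge \gamma(4\gamma-1)(3\gamma-1)n^4/6$ drops out, and it is tight on $T(n,4)$ at $\gamma=3/8$ and on $T(n,3)$ at $\gamma=1/3$), and your concrete argument for the base case $s=2$ is correct.

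However, the sketched inductive step for the general recursion has a genuine gap. Using $4k_4(G)=\sum_v k_3(G[N(v)])$, applying the $s=2$ bound inside each $G[N(v)]$, the Cauchy--Schwarz estimate $\sum_v k_2(G[N(v)])^2/d(v) \ge (3k_3)^2/(2k_2)$, and the crude upper bound $d(v)\le n$ on the error term $\sum_v d(v)\,k_2(G[N(v)])$, one obtains
\[
\frac{k_4}{k_3} \;\ge\; \frac{1}{4}\Bigl(6\,\frac{k_3}{k_2} - n\Bigr),
\]
which is \emph{not} the Moon--Moser inequality $\frac{k_4}{k_3}\ge \frac{1}{8}\bigl(9\,\frac{k_3}{k_2}-n\bigr)$. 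Feeding this weaker recursion into your chain gives $k_4 \ge \gamma(4\gamma-1)(8\gamma-3)n^4/12$, and since $(8\gamma-3)/2 < 3\gamma-1$ for all $\gamma<1/2$, this falls short of the claimed bound; worse, it is vacuous (non-positive) for $\gamma<3/8$, which is precisely the regime $\gamma=1/3+2\varepsilon$ in which the paper invokes the theorem in~\eqref{equ:K4-density}. One can see the failure already on $T(n,3)$, where your inductive bound yields $k_4/k_3\ge -n/12$ while Moon--Moser yields the sharp $k_4/k_3\ge 0$. So if you want to reprove rather than cite the recursion, the passage from neighborhoods back to global counts must be done more carefully than the stated ``$d(v)\le n$ plus Cauchy--Schwarz''; the cleanest resolution is simply to take the recursion as a black box from~\cite{MM62}, as you yourself allow for, at which point the rest of the proof is correct.
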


\subsection{Hypergraphs}
Given a $3$-graph $\mathcal{H}$, the \textbf{shadow} $\partial\mathcal{H}$ of $\mathcal{H}$ is defined as 
\begin{align*}
    \partial\mathcal{H}
    \coloneqq 
    \left\{e\in \binom{V(\mathcal{H})}{2} \colon \text{there exists $E\in \mathcal{H}$ with $e\subseteq E$}\right\}.
\end{align*}
For every $v\in V(\mathcal{H})$, the \textbf{link} of $v$ in $\mathcal{H}$ is 
\begin{align*}
    L_{\mathcal{H}}(v) 
    \coloneqq 
    \left\{e\in \binom{V(\mathcal{H})}{2} \colon e\cup \{v\} \in \mathcal{H}\right\}.
\end{align*}
For convenience, given a vertex set $W \subseteq V(\mathcal{H})$, we let $L_{\mathcal{H}}(v,W) \coloneqq L_{\mathcal{H}}(v) \cap \binom{W}{2}$. 

For a pair of vertices $\{u,v\} \subseteq V(\mathcal{H})$, the \textbf{neighborhood} of $\{u,v\}$ in $\mathcal{H}$ is 
\begin{align*}
    N_{\mathcal{H}}(uv)
    \coloneqq 
    \left\{w\in V(\mathcal{H}) \colon \{u,v,w\} \in \mathcal{H}\right\}.
\end{align*}
A vertex set $I \subseteq V(\mathcal{H})$ is \textbf{independent} in $\mathcal{H}$ if every edge in $\mathcal{H}$ contains at most one vertex in $I$. 
The \textbf{independence number} $\alpha(\mathcal{H})$ is the maximum size of an independent set in $\mathcal{H}$. 
Note that under this definition, a set $I \subseteq V(\mathcal{H})$ is independent in $\mathcal{H}$ iff it is independent in $\partial\mathcal{H}$. Thus, $\alpha(\mathcal{H}) = \alpha(\partial\mathcal{H})$.

\begin{fact}\label{FACT:2-links-cancellative}
    Suppose that $\mathcal{H}$ is a $\{K_4^{3-}, F_5\}$-free $3$-graph and $\{u,v\} \in \partial\mathcal{H}$. 
    Then 
    \begin{enumerate}[label=(\roman*)]
        \item\label{FACT:2-links-cancellative-1} $L_{\mathcal{H}}(v)$ is triangle-free for every $v\in V(\mathcal{H})$, 
        \item\label{FACT:2-links-cancellative-2} $N_{\mathcal{H}}(uv)$ is independent in $\mathcal{H}$ for every $\{u,v\} \in \partial\mathcal{H}$, and 
        \item\label{FACT:2-links-cancellative-3} $L_{\mathcal{H}}(u) \cap L_{\mathcal{H}}(v) = \emptyset$ for every $\{u,v\} \in \partial\mathcal{H}$. 
    \end{enumerate}
\end{fact}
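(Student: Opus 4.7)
The plan is to prove each of the three parts by an explicit case analysis: assume the stated conclusion fails, produce a specific triple of edges of $\mathcal{H}$ that must then exist, and verify that these edges form either a copy of $K_{4}^{3-}$ or of $F_5$, contradicting the hypothesis on $\mathcal{H}$. No Tur\'an-type or counting argument is needed; the proof is purely structural.

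For part~\ref{FACT:2-links-cancellative-1}, the only configuration to consider is a triangle $\{x,y,z\}$ inside $L_{\mathcal{H}}(v)$, which by definition of the link forces the three triples $\{v,x,y\}, \{v,x,z\}, \{v,y,z\}$ to lie in $\mathcal{H}$; these span only the four vertices $\{v,x,y,z\}$ and realize a $K_{4}^{3-}$. For part~\ref{FACT:2-links-cancellative-2}, I would suppose some edge $e=\{w_1,w_2,x\}\in\mathcal{H}$ meets $N_{\mathcal{H}}(uv)$ in two vertices, and split according to whether $x\in\{u,v\}$: in the first subcase the three edges $\{u,v,w_1\},\{u,v,w_2\},\{u,w_1,w_2\}$ collapse onto four vertices and form a $K_{4}^{3-}$; in the second, the edges $\{u,v,w_1\},\{u,v,w_2\},\{w_1,w_2,x\}$ form an $F_5$ (shared pair $\{u,v\}$, with the symmetric difference $\{w_1,w_2\}$ sitting inside the third edge). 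For part~\ref{FACT:2-links-cancellative-3}, I would use $\{u,v\}\in\partial\mathcal{H}$ to fix a witness $w$ with $\{u,v,w\}\in\mathcal{H}$, assume for contradiction that $\{x,y\}\in L_{\mathcal{H}}(u)\cap L_{\mathcal{H}}(v)$ so that $\{u,x,y\},\{v,x,y\}\in\mathcal{H}$, and split on whether $w\in\{x,y\}$: when $w\in\{x,y\}$ the three edges again collapse onto four vertices to yield a $K_{4}^{3-}$, and when $w\notin\{x,y\}$ the triple $\{x,y,u\},\{x,y,v\},\{u,v,w\}$ is an $F_5$ with shared pair $\{x,y\}$ and third edge $\{u,v,w\}$ containing the symmetric difference $\{u,v\}$.

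The only point requiring a small amount of care is keeping track of the subcases in which the witnessing vertex coincides with an already named vertex, since these force the five vertices of a candidate $F_5$ to collapse onto four and produce a $K_{4}^{3-}$ instead. This degeneracy is precisely why both forbidden configurations are needed in the hypothesis of the fact; beyond this case split the verification is entirely routine, so I do not anticipate any real obstacle.
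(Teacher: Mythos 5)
Your proposal is correct, and since the paper states Fact~\ref{FACT:2-links-cancellative} without proof, your argument supplies exactly the routine structural verification the authors leave implicit. In each part you correctly identify the three forced edges, correctly isolate the degenerate subcase where a witnessing vertex coincides with a named one (yielding $K_4^{3-}$ on four vertices) versus the generic subcase (yielding $F_5$ on five), and your labelings of the vertices of $K_4^{3-}=\{abc,abd,acd\}$ and $F_5=\{abc,abd,cde\}$ all match the produced triples.
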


\begin{fact}\label{FACT:3-links-F5-free}
    Suppose that $\mathcal{H}$ is an $F_5$-free $3$-graph and $\{v_1, v_2, v_3\} \in \mathcal{H}$ is an edge. 
    Then for every vertex set $W \subseteq V(\mathcal{H})\setminus \{v_1, v_2, v_3\}$, 
    \begin{enumerate}[label=(\roman*)]
        \item\label{FACT:3-links-F5-free-1} $L_{\mathcal{H}}(v_1,W)$, $L_{\mathcal{H}}(v_2,W)$, and $L_{\mathcal{H}}(v_3,W)$ are pairwise edge-disjoint, and 
        \item\label{FACT:3-links-F5-free-2} if there exist three edges $\{e_1, e_2, e_3\} \subseteq L_{\mathcal{H}}(v_1,W) \cup L_{\mathcal{H}}(v_2,W) \cup L_{\mathcal{H}}(v_3,W)$ that form a triangle, then either $\{e_1, e_2, e_3\} \subseteq L_{\mathcal{H}}(v_i,W)$ for some $i \in [3]$, or $|\{e_1, e_2, e_3\} \cap L_{\mathcal{H}}(v_i,W)| = 1$ for every $i \in [3]$. 
    \end{enumerate}
\end{fact}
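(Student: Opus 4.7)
Both parts will be established by contradiction, in each case extracting an explicit copy of $F_5 = \{abc, abd, cde\}$ from the assumed configuration. The common structural input is that every pair from $W$ is disjoint from $\{v_1, v_2, v_3\}$, so $\{v_1, v_2, v_3\}$ is always available as a candidate third edge in a would-be $F_5$.

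For part (i), suppose some pair $\{x, y\} \in \binom{W}{2}$ lies in both $L_{\mathcal{H}}(v_i, W)$ and $L_{\mathcal{H}}(v_j, W)$ for distinct $i, j \in [3]$. Then $\{v_i, x, y\}$ and $\{v_j, x, y\}$ are edges of $\mathcal{H}$ that share the pair $\{x, y\}$. Letting $v_k$ denote the remaining vertex of the edge $\{v_1, v_2, v_3\}$ and setting $a = x$, $b = y$, $c = v_i$, $d = v_j$, $e = v_k$, the three edges $\{v_i, x, y\}$, $\{v_j, x, y\}$, $\{v_1, v_2, v_3\}$ play the roles of $abc$, $abd$, $cde$ respectively. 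Since $v_k \notin W$ while $x, y \in W$, the five vertices $a, b, c, d, e$ are distinct, producing a copy of $F_5$ and a contradiction.

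For part (ii), write $e_1 = \{y, z\}$, $e_2 = \{x, z\}$, $e_3 = \{x, y\}$ for the three edges of the triangle on some $\{x, y, z\} \subseteq W$. By part (i), each $e_k$ belongs to at most one of the three links, so the only configurations left to rule out are those where two of the $e_k$ lie in a common link $L_{\mathcal{H}}(v_i, W)$ and the third lies in $L_{\mathcal{H}}(v_j, W)$ for some $j \neq i$. By the symmetry of $v_1, v_2, v_3$ and of $x, y, z$, it suffices to handle one representative case; for instance, assume $e_1, e_2 \in L_{\mathcal{H}}(v_1, W)$ and $e_3 \in L_{\mathcal{H}}(v_2, W)$, so that $\{v_1, y, z\}$, $\{v_1, x, z\}$, $\{v_2, x, y\}$ are all edges of $\mathcal{H}$. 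Then the assignment $a = v_1$, $b = z$, $c = y$, $d = x$, $e = v_2$ realizes these three edges as $abc$, $abd$, $cde$, an $F_5$, contradicting $F_5$-freeness.

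There is no substantive obstacle here; the proof is purely a matter of matching the forbidden pair-or-triangle configuration to the $\{abc, abd, cde\}$ pattern of $F_5$. The only point requiring any care is verifying that the five vertices used to instantiate $F_5$ are pairwise distinct, which follows immediately from $W \cap \{v_1, v_2, v_3\} = \emptyset$ and from the fact that the triangle $\{e_1, e_2, e_3\}$ uses three distinct vertices of $W$.
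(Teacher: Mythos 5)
Your proof is correct. The paper states this as a Fact without proof, and your argument — explicitly exhibiting the three edges of an $F_5$ in each forbidden configuration, using $\{v_1,v_2,v_3\}$ itself as the third edge in part (i), and checking distinctness of the five vertices via $W\cap\{v_1,v_2,v_3\}=\emptyset$ — is the intended and standard verification.
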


\section{Proofs of Theorems~\ref{THM:main-AES-F5} and~\ref{THM:main-AES-K43-F5}}\label{SEC:proof-F5}
In this section, we present the proofs of Theorems~\ref{THM:main-AES-F5} and~\ref{THM:main-AES-K43-F5}, assuming the validity of the following two key propositions, whose proofs are postponed to Sections~\ref{SEC:proof-F5-shadow-no-K4} and~\ref{SEC:proof-AES-shadow-K4}.

The first proposition, which is an extension of the feasible region theorems for $F_5$-free $3$-graphs  in~\cite{LM21feasbile}, shows that for large $n$, forbidding $F_5$ in a $3$-graph with high minimum degree is equivalent to forbid $K_4$ in its shadow. 
\begin{proposition}\label{PROP:F5-shadow-no-K4}
    Let $\varepsilon \in (0,1/180]$ be a real number and $n \ge 1/(7\varepsilon^2) > 4628$ be an integer. 
    Suppose that $\mathcal{H}$ is an $n$-vertex $F_5$-free $3$-graph with $\delta(\mathcal{H}) > (1/12 + \varepsilon) n^2$. 
    Then $\partial\mathcal{H}$ is $K_4$-free. 
\end{proposition}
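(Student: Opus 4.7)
The plan is to argue by contradiction: suppose $\partial\mathcal{H}$ contains a copy of $K_4$ on four vertices $\{v_1, v_2, v_3, v_4\}$, and derive a contradiction with the minimum-degree hypothesis. Two structural consequences of $F_5$-freeness drive the argument. First, for every $\{u,v\} \in \partial\mathcal{H}$, the hyperneighborhood $N_\mathcal{H}(uv)$ is independent in $\mathcal{H}$: two vertices $a, b \in N_\mathcal{H}(uv)$ together with an edge $\{a,b,c\}\in\mathcal{H}$ (with $c\notin\{u,v\}$) would produce $F_5 = \{\{u,v,a\}, \{u,v,b\}, \{a,b,c\}\}$. Second, if $\{a,b\} \in L_\mathcal{H}(u) \cap L_\mathcal{H}(v)$ with $\{a,b\}\cap\{u,v\}=\emptyset$, then $N_\mathcal{H}(uv) \subseteq \{a,b\}$, since any witness $w\notin\{a,b\}$ would yield $F_5 = \{\{u,a,b\}, \{v,a,b\}, \{u,v,w\}\}$. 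Applied to the pairs of the $K_4$, the first observation forces at most one triple inside $\{v_1,\ldots,v_4\}$ to lie in $\mathcal{H}$ (two such triples would place two of $\{v_1,\ldots,v_4\}$ into a common $N_\mathcal{H}(v_iv_j)$, contradicting the $\partial\mathcal{H}$-edge between them), while the second, with $V' \coloneqq V(\mathcal{H})\setminus\{v_1,\ldots,v_4\}$, implies that for each $i\neq j$ the intersection $L_\mathcal{H}(v_i,V') \cap L_\mathcal{H}(v_j,V')$ contains no two disjoint pairs and therefore has size at most $n-5$ (it is a star or a triangle).

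The central counting compares lower and upper bounds on $\sum_{i=1}^{4} d_\mathcal{H}(v_i)$. Decomposing the edges of $\mathcal{H}$ by intersection size with $\{v_1,\ldots,v_4\}$, write $\sum_{i=1}^{4} d_\mathcal{H}(v_i) = a_1 + 2a_2 + 3a_3$, where $a_k$ counts edges of $\mathcal{H}$ meeting $\{v_1,\ldots,v_4\}$ in exactly $k$ vertices. From the observations above, $a_3 \le 1$ and trivially $a_2 \le 6(n-4)$, so the minimum-degree hypothesis gives
\[
a_1 \;=\; \sum_{i=1}^{4} |L_\mathcal{H}(v_i, V')| \;\ge\; \left(\tfrac{1}{3}+4\varepsilon\right)n^2 - O(n).
\]
Moreover, the pairwise intersection bound combined with standard inclusion–exclusion yields $a_1 \le \bigl|\bigcup_{i=1}^{4} L_\mathcal{H}(v_i,V')\bigr| + O(n)$. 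The aim is to establish the matching upper bound $\bigl|\bigcup_{i=1}^{4} L_\mathcal{H}(v_i,V')\bigr| \le n^2/3 + O(n)$, since the resulting inequality $4\varepsilon n^2 \le O(n)$ then contradicts $n \ge 1/(7\varepsilon^2)$ and completes the proof.

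The hard part is establishing this upper bound on the union. Since $\bigcup_{i=1}^{4} L_\mathcal{H}(v_i,V') \subseteq \partial\mathcal{H}[V']$, it suffices to show that $\partial\mathcal{H}[V']$ is (essentially) $K_4$-free so that Tur\'an's theorem applies. A direct induction on $n$ fails because neither the minimum-degree threshold $(1/12+\varepsilon)n^2$ nor the size hypothesis $n \ge 1/(7\varepsilon^2)$ transfers cleanly onto $\mathcal{H}[V']$. The plan is therefore a direct structural argument: any putative $K_4$ on $\{u_1,u_2,u_3,u_4\} \subseteq V'$ in $\partial\mathcal{H}$ would, through cross-interactions with $\{v_1,\ldots,v_4\}$ governed by the two observations above, force either an extra triple in $\mathcal{H}$ within $\{v_1,\ldots,v_4\}$ (contradicting $a_3 \le 1$) or an explicit copy of $F_5$. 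The argument splits into the case where no triple in $\{v_1,\ldots,v_4\}$ belongs to $\mathcal{H}$, and the case where some triple, say $\{v_1,v_2,v_3\}$, does—the latter case also affords the pairwise edge-disjointness of $L_\mathcal{H}(v_1,V'), L_\mathcal{H}(v_2,V'), L_\mathcal{H}(v_3,V')$ and the monochromatic-or-rainbow triangle rule from Fact~\ref{FACT:3-links-F5-free}. Making this reduction quantitatively sharp at the threshold $4/45 = 1/12 + 1/180$—which is attained exactly by the extremal $W_5^3$-blowup (whose shadow is $K_4$-free)—will require careful refinement of the $O(n)$ estimates for $a_2$ and for the pairwise link intersections, and is the principal technical obstacle.
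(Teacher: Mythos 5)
Your preliminary observations are correct but the plan has a genuine gap at exactly the step you flag as the ``principal technical obstacle.'' The lower bound $a_1 \ge (1/3 + 4\varepsilon)n^2 - O(n)$ and the consequent $|\bigcup_i L_{\mathcal H}(v_i,V')| \ge (1/3+4\varepsilon)n^2 - O(n)$ are fine (and align with~\eqref{equ:Prop-F5-no-K4-a} and~\eqref{equ:max-clique-4} in the paper, after a small bookkeeping change: the paper excises the whole witness set $B_S$ rather than just $\{v_1,\dots,v_4\}$, so that Fact~\ref{FACT:3-links-F5-free}~\ref{FACT:3-links-F5-free-1} gives exact pairwise disjointness and no star-intersection correction is needed). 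But the matching upper bound $|\bigcup_i L_{\mathcal H}(v_i,V')| \le n^2/3 + O(n)$ is where the proposal breaks. You want to deduce it from ``$\partial\mathcal H[V']$ is essentially $K_4$-free,'' yet proving that is (up to a negligible boundary term) the statement of the Proposition itself, and the sketched route---that a $K_4$ on $\{u_1,\dots,u_4\}\subseteq V'$ must ``cross-interact'' with $\{v_1,\dots,v_4\}$ so as to force a second triple inside $\{v_1,\dots,v_4\}$ or an $F_5$---has no mechanism behind it. Two vertex-disjoint $K_4$'s in the shadow need not interact through any common hyperedge, and the observations $a_3 \le 1$ and ``$L_{\mathcal H}(v_i,V')\cap L_{\mathcal H}(v_j,V')$ is a star'' say nothing about what happens inside $V'$ far from $\{v_1,\dots,v_4\}$.

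The paper's proof does not attempt any such local exclusion and never proves $K_4$-freeness of a restricted shadow. Instead it first establishes the weaker Claim~\ref{CLAIM:no-K5} that $G=\partial\mathcal H$ is $K_5$-free (which only yields the Tur\'an bound $\tfrac{3}{8}n^2$, not $\tfrac{1}{3}n^2$, so this alone does not contradict the lower bound $|G_T|\ge n^2/3+2\varepsilon n^2$). The missing global idea is then a quantitative supersaturation-plus-pigeonhole step: since $|G_T|$ exceeds the $K_4$-free Tur\'an bound by $\varepsilon n^2$, Theorem~\ref{THM:MM62} produces $\Omega(\varepsilon n^4)$ copies of $K_4$ in $G_T$; Theorem~\ref{THM:Moon68} produces $t=\lceil 1/(2\varepsilon)\rceil$ pairwise disjoint $K_4$'s $S_1,\dots,S_t$; pigeonhole over the $\binom{n}{4}$ possible $K_4$-vertex-sets forces two indices $i\neq j$ with a common $K_4$ on a set $U$ inside $G_{S_i}\cap G_{S_j}$; and a second pigeonhole on $G[U]$ over $S_i$ produces a vertex $v$ with two link-edges in $G[U]$, which gives either an explicit $F_5$ (if the edges meet) or a $K_5$ on $U\cup\{v\}$ (if they are disjoint), contradicting Claim~\ref{CLAIM:no-K5}. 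This counting machinery is the content you are missing, and it cannot be replaced by the proposed local exclusion of $K_4$'s inside $V'$.
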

\textbf{Remark.} 
The bound $\delta(\mathcal{H}) > (1/12 + \varepsilon) n^2$ in Proposition~\ref{PROP:F5-shadow-no-K4} is asymptotically tight, as shown by the construction presented in Section~\ref{SEC:Remark}. 

The second proposition establishes a weak version of Theorem~\ref{THM:main-AES-F5}, which, instead of forbidding $F_5$ in the $3$-graph $\mathcal{H}$, forbids $K_4$ in its shadow. Note that since $K_{4} \subseteq \partial F_5$, every $3$-graph without a copy of $K_4$ in its shadow must be $F_5$-free.
\begin{proposition}\label{PROP:AES-shadow-K4}
    Suppose that $\mathcal{H}$ is an $n$-vertex $3$-graph with $\delta(\mathcal{H}) > 4n^2/45$ and $\partial\mathcal{H}$ is $K_4$-free. 
    Then $\mathcal{H}$ is $3$-partite. 
\end{proposition}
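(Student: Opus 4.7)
The plan is to reduce the proposition to known structure theorems for $K_4$-free graphs with large minimum degree, combined with the hypergraph minimum-degree hypothesis. Since the shadow of either $K_4^{3-}$ or $F_5$ contains a copy of $K_4$, the hypothesis that $\partial\mathcal{H}$ is $K_4$-free forces $\mathcal{H}$ to be $\{K_4^{3-}, F_5\}$-free. By Fact~\ref{FACT:2-links-cancellative}(i), each link $L_{\mathcal{H}}(v)$ is triangle-free, and since $L_{\mathcal{H}}(v) \subseteq \binom{N_{\partial\mathcal{H}}(v)}{2}$, Mantel's theorem yields $d_{\partial\mathcal{H}}(v)^2/4 \geq |L_{\mathcal{H}}(v)| = d_{\mathcal{H}}(v) > 4n^2/45$. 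Hence $\delta(\partial\mathcal{H}) > 4n/\sqrt{45} > 4n/7$.

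Next, I would let $G$ be a maximal $K_4$-free graph on $V(\mathcal{H})$ containing $\partial\mathcal{H}$, so $\delta(G) \geq \delta(\partial\mathcal{H}) > 4n/7$ and Theorem~\ref{THM:Lyle-K4} applies: either (a) $\alpha(G) > 4\delta(G) - 2n$, or (b) $G = I \uproduct F$ with $I$ independent and $F$ maximal triangle-free. In case (b) with $F$ bipartite, $G$ (hence $\partial\mathcal{H}$) is $3$-chromatic, and since a proper $3$-coloring of the shadow makes every edge of $\mathcal{H}$ rainbow, $\mathcal{H}$ is $3$-partite and we are done.

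For case (b) with $F$ non-bipartite, set $i = |I|$ and $m = n - i$. Every triangle of $G = I \uproduct F$ (and hence every edge of $\mathcal{H}$) uses exactly one vertex in $I$ and two adjacent vertices in $F$. This yields $d_F(v) \geq d_{\mathcal{H}}(v)/i > 4n^2/(45 i)$ for every $v \in V(F)$, and $|F| \geq d_{\mathcal{H}}(u) > 4n^2/45$ for every $u \in I$; Mantel on $F$ then forces $i < n(1 - 4/\sqrt{45})$. Theorem~\ref{THM:Jin93} with $d=1$, applied contrapositively to the non-bipartite $F$, gives $\delta(F) \leq 2m/5$, which combined with the lower bound on $\delta(F)$ yields $i(n-i) > 2n^2/9$, hence $i > n/3$. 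To close the remaining window $n/3 < i < n(1 - 4/\sqrt{45})$ I would invoke Theorem~\ref{THM:Jin93} at larger $d$ to realize $F$ as a subgraph of a $\Gamma_d$-blowup with part sizes $y_1, \dots, y_{3d-1}$, and then combine the weighted edge-count lower bound $\sum_{\{j,k\} \in \Gamma_d} y_j y_k \geq |F| > 4n^2/45$ with the per-vertex lower bounds $\sum_{k \colon \{j,k\} \in \Gamma_d} y_k > 4n^2/(45 i)$ to derive a contradiction with $\sum_j y_j = m$.

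Case (a) is the subtler situation: the large independent set $I'$ with $|I'| > 4\delta(G) - 2n > 16n/\sqrt{45} - 2n$ forces every edge of $\mathcal{H}$ to have at most one vertex in $I'$, so each link $L_{\mathcal{H}}(v)$ with $v \in I'$ sits inside $\binom{V \setminus I'}{2}$; the Mantel argument gives $|V \setminus I'| > 4n/\sqrt{45}$, hence $|I'| < n(1 - 4/\sqrt{45})$. Together with the Lyle lower bound, this pins $\delta(G)$ into a very narrow interval in which I would analyze $G[V \setminus I']$ together with the residual hypergraph structure to force a join decomposition and reduce to case (b). The main obstacles I anticipate are the blowup-size analysis in case (b) with non-bipartite $F$, which has to be tight precisely at the extremal configuration $W_5^3[n/3, 2n/15, \dots, 2n/15]$, and case (a), which seems to require a careful further structural argument beyond the direct application of Theorems~\ref{THM:Lyle-K4} and~\ref{THM:Jin93}.
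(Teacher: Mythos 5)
Your skeleton matches the paper's: Mantel's theorem gives $\delta(\partial\mathcal{H}) > \frac{4}{3\sqrt{5}}n > \frac{4}{7}n$, so Theorem~\ref{THM:Lyle-K4} applies to a maximal $K_4$-free extension $G$ of $\partial\mathcal{H}$, and then Jin's theorem and an optimization over blowup part sizes finish the join case. Your handling of the join case with $F$ non-bipartite is in the right spirit, though looser than the paper's: there, the bound $\alpha(G) \le (\frac{16}{3\sqrt{5}}-2)n$ directly forces $\delta(G[U]) > \frac{3}{8}|U|$, so Jin's theorem at $d=2$ pins $G[U]$ to a $C_5$-blowup immediately; at that point $\mathcal{H}$ is exactly a blowup of $W_5^3$, and the infeasibility of the resulting six-variable system (edge count plus the five per-vertex degree constraints $x(y_{i-1}+y_{i+1})>4/45$) is Lemma~\ref{LEMMA:opt-1}. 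Your plan to allow larger $d$ is not needed once $\delta(G[U])>3|U|/8$ is established, but the constraints you list are essentially the right ones.

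The genuine gap is case (a), and you have correctly sensed that it is the crux but you do not have the two ideas that resolve it. Your stated plan — to ``force a join decomposition and reduce to case (b)'' — is not what happens and would likely fail, because for $\alpha(G)$ above the Lyle threshold $G$ need not be a join. The paper instead treats this case directly: it fixes a maximum independent set $I$, picks any $v\in I$, and sets $U := N_G(v)$ (so $G[U]$ is triangle-free). It then shows $\delta(G[U]) > \frac{6}{17}|U|$, applies Jin's theorem to realize $G[U]$ as $\Gamma_d$-colorable for some $d \le 5$, and uses a second optimization (Lemma~\ref{LEMMA:opt-2}, over $\Gamma_d$-blowup part sizes with the constraints coming from $\delta(\mathcal{H})$, $\delta(G[U])$, and $|I| \ge (\frac{16}{3\sqrt{5}}-2)n$) to rule out $d \ge 2$, concluding that $G[U]$ is bipartite. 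That yields three large disjoint independent sets $I, U_1, U_2$, but this alone does not give $3$-partiteness of $\mathcal{H}$: the second missing idea is a vertex-extension lemma (Lemma~\ref{LEMMA:U1-U2-U3-3-partite}) showing that once three disjoint independent sets are large enough and cover all but a $\gamma$-fraction of $V$, the high minimum degree of $\mathcal{H}$ forces every remaining vertex to be absorbable into one of them, extending to a full $3$-partition. Without an analogue of this extendability argument your proposal cannot close case (a), and that step is the main new ingredient of the paper's proof.
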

\textbf{Remark.}
The bound $\delta(\mathcal{H}) > 4n^2/45$ in Proposition~\ref{PROP:AES-shadow-K4} is also optimal, as shown by the same construction presented in in Section~\ref{SEC:Intorduction}. 

Theorem~\ref{THM:main-AES-F5} follows immediately from Propositions~\ref{PROP:F5-shadow-no-K4} and~\ref{PROP:AES-shadow-K4}. 
\begin{proof}[Proof of Theorem~\ref{THM:main-AES-F5}]
    Let $\varepsilon \coloneqq 1/180$. 
    Let $n \ge 5000 > 1/(7\varepsilon^2)$. 
    Let $\mathcal{H}$ be an $n$-vertex $F_5$-free $3$-graph with $\delta(\mathcal{H}) > 4n^2/45 = (1/12 + \varepsilon) n^2$. 
    Applying Proposition~\ref{PROP:F5-shadow-no-K4} to $\mathcal{H}$, we see that $\partial\mathcal{H}$ is $K_4$-free. 
    Therefore, it follows from Proposition~\ref{PROP:AES-shadow-K4} that $\mathcal{H}$ is $3$-partite. 
\end{proof}

Theorem~\ref{THM:main-AES-K43-F5} follows from Theorem~\ref{THM:main-AES-F5} via a standard blowup argument. 
\begin{proof}[Proof of Theorem~\ref{THM:main-AES-K43-F5}]
    Suppose to the contrary that Theorem~\ref{THM:main-AES-K43-F5} fails for some positive integer $n$. That is, there exists an $n$-vertex $\{K_{4}^{3-}, F_5\}$-free  $3$-graph with $\delta(\mathcal{H}) > \frac{4n^2}{45}$ that is not 3-partite. 
    Let $m$ be a sufficiently large integer such that $N \coloneqq mn \ge 5000$. 
    Let $\mathcal{H}[m]$ denote the $3$-graph obtained from $\mathcal{H}$ by replacing each vertex with a set of $m$ vertices and replacing each edge with the corresponding complete $3$-partite $3$-graph. 
    Note that $v(\mathcal{H}) = N \ge 5000$ and 
    \begin{align*}
        \delta(\mathcal{H}[m])
        = \delta(\mathcal{H}) \cdot m^2
        > \frac{4n^2}{45} \cdot m^2 
        =\frac{4 N^2}{45}.
    \end{align*}
    Additionally, it is easy to see that $\mathcal{H}[m]$ is still $\{K_{4}^{3-}, F_5\}$-free (see e.g.~{\cite[p.51]{LMR23unif}}), and in particular, $F_5$-free, but not $3$-partite. 
    However, this contradicts Theorem~\ref{THM:main-AES-F5}.  
    Therefore, Theorem~\ref{THM:main-AES-K43-F5} holds for every positive integer $n$. 
\end{proof}

\section{Proof of Proposition~\ref{PROP:F5-shadow-no-K4}}\label{SEC:proof-F5-shadow-no-K4}
In this section, we prove Proposition~\ref{PROP:F5-shadow-no-K4}. 
\begin{proof}[Proof of Proposition~\ref{PROP:F5-shadow-no-K4}]
    Fix a real number $\varepsilon \in (0,1/180]$.
    Let $n \ge 1/(7\varepsilon^2) > 4628$ be an integer. 
    Let $\mathcal{H}$ be an $n$-vertex $F_5$-free $3$-graph with $\delta(\mathcal{H}) > (1/12 + \varepsilon) n^2$. 
    Let $V \coloneqq V(\mathcal{H})$ and $G \coloneqq \partial\mathcal{H}$. 
    For every set $S\subseteq V$ that induces a clique in $G$, we associate two vertex sets $B_{S}, W_{S} \subseteq V$ and a subgraph $G_{S}\subseteq G[W_{S}]$ with it as follows$\colon$ 
    First, for each pair $\{u,v\} \subseteq S$, fix an edge $E_{uv} \in \mathcal{H}$ containing $\{u,v\}$. Then let 
    \begin{align*}
        B_{S} 
        \coloneqq \bigcup_{\{u,v\}\subseteq S} E_{uv}, 
        \quad
        W_{S} 
        \coloneqq V\setminus B_{S},
        \quad\text{and}\quad
        G_{S} 
        \coloneqq \bigcup_{v \in S}L_{\mathcal{H}}(v,W_{S}). 
    \end{align*}
    Observe that if $S\subseteq V$ induces a clique in $G$, then it follows from Fact~\ref{FACT:3-links-F5-free}~\ref{FACT:3-links-F5-free-1} that $L_{\mathcal{H}}(u, W_{S}) \cap L_{\mathcal{H}}(v, W_{S}) = \emptyset$ for all distinct vertices $u,v\in S$. Therefore, the graph $G_{S}$ satisfies  
        \begin{align}\label{equ:Prop-F5-no-K4-a}
            |G_{S}|
            = \sum_{v\in S}|L_{\mathcal{H}}(v, W_{S})|
            \ge |S|  \left( \delta(\mathcal{H})-|B_{S}| n \right) 
            & \ge |S| \left( \left(\frac{1}{12}+\varepsilon\right)n^2 - \binom{|S|+1}{2} n \right).
        \end{align}
        
    \begin{claim}\label{CLAIM:no-K7}
        The graph $G$ is $K_7$-free.
    \end{claim}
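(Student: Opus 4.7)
The plan is to argue by contradiction: assume $S \subseteq V$ with $|S| = 7$ induces a clique in $G$, and extract two contradictory bounds on $|G_{S}|$, where $B_{S}$, $W_{S}$, and $G_{S}$ are the objects constructed just above the claim.

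For the lower bound, substituting $|S| = 7$ directly into inequality~(\ref{equ:Prop-F5-no-K4-a}) gives
\[
|G_{S}| \geq 7 \left(\left(\tfrac{1}{12} + \varepsilon\right)n^{2} - 28 n\right).
\]
For the matching upper bound, I first need to verify that $G_{S}$ is a \emph{simple} graph on $W_{S}$, not merely a multi-union. For each pair $\{u,v\} \subseteq S$, the chosen edge $E_{uv} \in \mathcal{H}$ together with the inclusion $W_{S} \subseteq V \setminus E_{uv}$ allows Fact~\ref{FACT:3-links-F5-free}~\ref{FACT:3-links-F5-free-1} to be applied to the $\mathcal{H}$-edge $E_{uv}$, yielding that $L_{\mathcal{H}}(u, W_{S})$ and $L_{\mathcal{H}}(v, W_{S})$ are edge-disjoint. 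Ranging over all pairs in $S$ shows that the sum-of-sizes count in~(\ref{equ:Prop-F5-no-K4-a}) is a genuine edge count, and therefore $|G_{S}| \leq \binom{|W_{S}|}{2} \leq \binom{n}{2}$.

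Combining the two inequalities and dividing through by $n$ reduces to $\left(\tfrac{1}{12} + 7\varepsilon\right) n \leq C$ for an absolute constant $C$ of order a few hundred, which for $\varepsilon \leq 1/180$ caps $n$ at roughly a few thousand, contradicting the hypothesis $n \geq 1/(7\varepsilon^{2}) > 4628$. The argument is a one-step double-counting and has no serious obstacle; the two structural points worth noting are that $|S| = 7$ is the smallest clique size for which the coefficient $|S|/12$ exceeds the trivial $1/2$ coming from $\binom{n}{2}$, and that the $F_{5}$-freeness of $\mathcal{H}$ is used precisely once, namely to ensure the edge-disjointness that turns the sum of link sizes into a true count of distinct edges.
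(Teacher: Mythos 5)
Your argument is correct and is essentially the paper's own proof: substitute $|S|=7$ into inequality~(\ref{equ:Prop-F5-no-K4-a}) to get $|G_S| \ge 7\bigl((1/12+\varepsilon)n^2 - 28n\bigr) \ge 7n^2/12 - 196n$, compare against the trivial upper bound $\binom{|W_S|}{2}$, and conclude for $n$ large. One small framing quibble: the edge-disjointness from Fact~\ref{FACT:3-links-F5-free}~\ref{FACT:3-links-F5-free-1} is needed for the \emph{lower} bound (it is what makes $|G_S| = \sum_{v\in S}|L_{\mathcal{H}}(v,W_S)|$ an equality in~(\ref{equ:Prop-F5-no-K4-a})), not for the upper bound $|G_S|\le\binom{|W_S|}{2}$, which holds automatically because $G_S$ is by definition a set of pairs inside $W_S$; the paper establishes this disjointness in the paragraph preceding the claim, so your rederivation is a redundant but harmless check.
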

    \begin{proof}[Proof of Claim~\ref{CLAIM:no-K7}]
        Suppose to the contrary that there exists a $7$-set $S = \{v_1, \ldots, v_7\} \subseteq V$ that induces a copy of $K_7$ in $G$. 
        Then it follows from~\eqref{equ:Prop-F5-no-K4-a} that 
        \begin{align*}
            |G_{S}| 
             \ge 7 \cdot \left( \left(\frac{1}{12}+\varepsilon\right)n^2 - 28 n \right) 
             \ge \frac{7 n^2}{12} - 196 n
             = \frac{n^2}{2} + \frac{n^2}{12} - 196 n
            > \binom{|W_{S}|}{2}, 
        \end{align*}
        a contradiction. Here, we used the assumption that $n > 4628$. Therefore, $G$ is $K_7$-free.
    \end{proof}
    Let $k \le 6$ denote the number of vertices in the largest clique in $G$.
    Assume that the set $T = \{u_1, \ldots, u_k\} \subseteq V$ induces a copy of $K_k$ in $G$.
    \begin{claim}\label{CLAIM:no-K5}
        We have $k\le 4$. In other words, $G$ is $K_5$-free.
    \end{claim}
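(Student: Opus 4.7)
The plan is to rule out $k = 5$ and $k = 6$ simultaneously, bootstrapping from the $K_7$-freeness of $G$ already established in Claim~\ref{CLAIM:no-K7}. The key observation is that $G_T \subseteq G[W_T] \subseteq G$, so $G_T$ inherits $K_7$-freeness. Applying Tur\'{a}n's theorem to $G_T$ on $|W_T|$ vertices and using $|W_T| \le n - k$ (which follows from $T \subseteq B_T$) yields the upper bound
\begin{align*}
    |G_T| \le \left(1 - \frac{1}{6}\right) \frac{|W_T|^2}{2} \le \frac{5(n-k)^2}{12}.
\end{align*}

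Combining this with the lower bound \eqref{equ:Prop-F5-no-K4-a} on $|G_T|$, I would obtain
\begin{align*}
    k\left(\left(\frac{1}{12}+\varepsilon\right)n^2 - \binom{k+1}{2}n\right) \le \frac{5(n-k)^2}{12}.
\end{align*}
For $k = 6$ the leading $n^2$-coefficient is $1/2$ on the LHS versus $5/12$ on the RHS, so the $\Theta(n^2)$ gap easily dominates the linear error terms once $n \ge 4628$, contradicting the inequality. For the tighter $k = 5$ case, the leading $5n^2/12$ terms cancel, and after simplification the inequality collapses to $\varepsilon n \le 85/6 + O(1/n) \approx 14.2$; since $\varepsilon = 1/180$ and $n \ge 4628$ force $\varepsilon n \ge 4628/180 > 25$, this is a contradiction.

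The main obstacle is the tight $k = 5$ case. If one only used the trivial bound $|G_T| \le \binom{|W_T|}{2} \approx n^2/2$ employed in Claim~\ref{CLAIM:no-K7}, then for $k = 5$ one would require $5(1/12+\varepsilon)n^2 > n^2/2$, i.e., $\varepsilon > 1/60$, which fails for our $\varepsilon \le 1/180$. Only by upgrading to the Tur\'{a}n bound, available precisely because $G$ is already known to be $K_7$-free, do the $5n^2/12$ leading terms cancel and the $\varepsilon n^2$ correction become decisive. This is the reason the argument proceeds in the stepwise manner $k \le 6 \to k \le 4$ rather than ruling out all large cliques in one shot.
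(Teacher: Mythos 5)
Your proof is correct but takes a genuinely different route from the paper. The paper exploits the maximality of the clique $T$: since $G$ is $K_{k+1}$-free, Tur\'{a}n applied to all of $G$ on $n$ vertices gives $|G_T|\le|G|\le\frac{k-1}{2k}n^2$, which for $k=5$ is $\frac{2}{5}n^2=\frac{24}{60}n^2$; combined with the lower bound $|G_T|\ge\frac{5}{12}n^2+5\varepsilon n^2-75n=\frac{25}{60}n^2+\cdots$, there is a built-in $\frac{1}{60}n^2$ margin that needs only $n>4500$ and $\varepsilon>0$. You instead discard the maximality information and use only the weaker $K_7$-freeness from Claim~\ref{CLAIM:no-K7}, applying Tur\'{a}n to $G_T$ on $|W_T|\le n-k$ vertices to get $|G_T|\le\frac{5(n-k)^2}{12}$. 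For $k=5$ this upper bound is $\approx\frac{25}{60}n^2$, so the leading quadratic terms now cancel and your contradiction has to come from the $\varepsilon n^2$ term beating the $O(n)$ leftovers, i.e.\ from $\varepsilon n>\frac{85}{6}+O(1/n)$. This does hold under the proposition's hypotheses, but you should derive it correctly: $\varepsilon$ is \emph{not} fixed at $1/180$ (the proposition allows any $\varepsilon\in(0,1/180]$), so the chain "$\varepsilon=1/180$ and $n\ge4628$ force $\varepsilon n\ge 4628/180$" is not valid in general. The right deduction is $\varepsilon n\ge\varepsilon\cdot\frac{1}{7\varepsilon^2}=\frac{1}{7\varepsilon}\ge\frac{180}{7}\approx25.7>14.2$. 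With that fix your argument is sound. In short: the paper's choice of Tur\'{a}n bound is sharper and makes the $k=5$ case robust even for $\varepsilon\to0^+$, whereas your version bootstraps from the already-proved $K_7$-freeness at the cost of needing the full strength of the $n\ge1/(7\varepsilon^2)$ hypothesis in the tight case. Both paths reach the same conclusion.
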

    \begin{proof}[Proof of Claim~\ref{CLAIM:no-K5}]
        Since $G$ is $K_{k+1}$-free, it follows from Tur\'{a}n Theorem~\cite{TU41} that 
        \begin{align}\label{equ:F5-no-K4-GT-upper}
            |G_T|
            \le |G| 
            \le \frac{k-1}{2k} n^2. 
        \end{align}
        %
        On the other hand, it follows from~\eqref{equ:Prop-F5-no-K4-a} that
        \begin{align}\label{equ:max-clique}
                |G_{T}|
                \ge k \left( \left(\frac{1}{12}+\varepsilon\right)n^2 - \binom{k+1}{2} n \right)  
                 = \frac{k}{12}n^2 + k\varepsilon n^2 - k\binom{k+1}{2} n.
        \end{align}
        Suppose that $k \in \{5,6\}$. Then simple calculations show that for $n > 4628$, we have  
        \begin{align*}
            \frac{k^2-6k+6}{12k}n^2 - k\binom{k+1}{2} n
            > 0.
        \end{align*}
        Therefore, it follows from~\eqref{equ:max-clique} that 
        \begin{align*}
            |G_T| 
            & \ge \frac{k}{12}n^2 + k\varepsilon n^2 - k\binom{k+1}{2} n \\
            & = \frac{k-1}{2k}n^2 + \frac{k^2-6k+6}{12k}n^2 + k\varepsilon n^2 - k\binom{k+1}{2} n
            > \frac{k-1}{2k}n^2,
        \end{align*}
        contradicting~\eqref{equ:F5-no-K4-GT-upper}. 
    \end{proof}
    By Claim~\ref{CLAIM:no-K5}, we may assume that $k = 4$. In this case, since $n \ge 1/(7\varepsilon^2)$,~\eqref{equ:max-clique} implies that 
    \begin{align}\label{equ:max-clique-4}
        |G_T|
        \ge \frac{n^2}{3} + 4\varepsilon n^2 - 40 n
        \ge \frac{n^2}{3} + 2\varepsilon n^2. 
    \end{align}
    It follows from Theorem~\ref{THM:MM62} that the number of $K_4$ in $G$ satisfies  
    \begin{align}\label{equ:K4-density}
        N(K_4, G_T)
        \ge \frac{1}{6}\cdot \left(\frac{1}{3}+2\varepsilon\right)\left(4\left(\frac{1}{3}+2\varepsilon\right)-1\right)\left(3\left(\frac{1}{3}+2\varepsilon\right)-1\right)n^4
        > \frac{\varepsilon n^4}{9}.  
    \end{align}
    Let $t \coloneqq \lceil \frac{1}{2\varepsilon} \rceil$. 
    Since $n \ge 1/(7\varepsilon^2)$, it follows from~\eqref{equ:max-clique-4} that 
    \begin{align*}
        |G| 
        \ge |G_T|
        \ge \frac{n^2}{3} + 2\varepsilon n^2 
        \ge \frac{n^2}{3} +  \frac{t n}{3}.
    \end{align*}
    By Theorem~\ref{THM:Moon68}, there exist $t$ pairwise vertex-disjoint copies of $K_4$ in $G$. 
    
    Let $S_1, \ldots, S_{t} \subseteq V$ be $t$ pairwise disjoint $4$-sets, with each $S_i$ inducing a copy of $K_4$ in $G$. Recall from~\eqref{equ:K4-density} that each $G_{S_i}$ contains at least $\frac{\varepsilon n^4}{9}$ copies of $K_4$. 
    Since $t \cdot \frac{\varepsilon n^4}{9} \ge \frac{1}{2\varepsilon} \cdot \frac{\varepsilon n^4}{9} > \binom{n}{4}$, by the Pigeonhole Principle, there exist distinct $S_i$ and $S_j$ such that $G_{S_i} \cap G_{S_j}$ contains a copy of $K_4$.
    By symmetry, we may assume that $(i,j) = (1,2)$. 
    
    Let $U \subseteq V$ be a $4$-set that induces a copy of $K_4$ in $G_{S_1} \cap G_{S_2}$. 
    It follows from the definition that $S_1, S_2, U$ are pairwise disjoint. 
    Since $|G[U]| = |K_4| = 6$, by the Pigeonhole Principle, there exists a vertex $v\in S_1$ such that $|L_{\mathcal{H}}(v) \cap G[U]| \ge 2$.
    Fix two distinct edges $\{u_1, u_2\}, \{w_1, w_2\} \in L_{\mathcal{H}}(v) \cap G[U]$. 
    Suppose that $\{u_1, u_2\}\cap \{w_1, w_2\} \neq\emptyset$. By symmetry, we may assume that $u_1 = w_1$. Let $\hat{v} \in S_2$ be a vertex such that $\{u_2, w_2\} \in L_{\mathcal{H}}(\hat{v}) \cap G_{S_2}$. 
    Observe that edges $\{v,u_1,u_2\},\{v,u_1, w_2\},\{u_2, w_2, \hat{v}\}$ form a copy of $F_5$ in $\mathcal{H}$, a contradiction. 
    Therefore, $\{u_1, u_2\}\cap \{w_1, w_2\} = \emptyset$. 
    However, this implies that the set $U \cup \{v\}$ induces a copy of $K_5$ in $G$, contradicting Claim~\ref{CLAIM:no-K5}. 
    This means that $k \le 3$, thus completing the proof of Proposition~\ref{PROP:F5-shadow-no-K4}. 
\end{proof}

\section{Preparations for the proof of Proposition~\ref{PROP:AES-shadow-K4}}\label{SEC:prep-proof-AES-shadow-K4}
In this section, we establish the following three key lemmas that are crucial for the proof of Proposition~\ref{PROP:AES-shadow-K4}.

The following lemma shows that, to prove Proposition~\ref{PROP:AES-shadow-K4}, it suffices to find a large induced $3$-partite subgraph.
This lemma is motivated by the concept of vertex-extendability introduced in~\cite{LMR23unif}, which has since found further applications in several Andr{\'a}sfai--Erd\H{o}s--S\'{o}s-type problems (see e.g.~\cite{HLZ24,CL24,CILLP24}). 
\begin{lemma}\label{LEMMA:U1-U2-U3-3-partite}
    Let $\alpha, \beta, \delta, \gamma > 0$ be real numbers satisfying 
    \begin{align}\label{equ:LEMMA:U1-U2-U3-3-partite}
        \begin{cases}
            \beta & >~\frac{1}{2}, \\
            \delta &  >~\max\left\{ \frac{\beta(1-\beta)}{3} + \gamma \alpha,~\frac{(1-\gamma)^2}{12}+\gamma\alpha,~\frac{1}{12}\right\}, \quad\text{and}\\
            \delta & >~\frac{1}{2} \cdot \max\left\{\frac{(2-2\beta)^2}{4} + (2\beta-1)\gamma,~\frac{\beta^2}{4} + (1-\beta)\gamma\right\}.
        \end{cases}
    \end{align}
    Let $\mathcal{H}$ be an $n$-vertex $\{K_4^{3-}, F_5\}$-free $3$-graph.  
    Suppose that $\mathcal{H}$ satisfies $\alpha(\mathcal{H}) \le \alpha n$, $\delta(\mathcal{H}) > \delta n^2$, and contains three pairwise disjoint independent sets $U_1, U_2, U_3 \subseteq V(\mathcal{H})$ satisfying 
    \begin{enumerate}[label=(\roman*)]
        \item\label{LEMMA:U1-U2-U3-3-partite-2} $|U_i| + |U_j| > \beta n$ for every $\{i,j\} \in \binom{[3]}{2}$, and 
        \item\label{LEMMA:U1-U2-U3-3-partite-3} $|U_1| + |U_2| + |U_3| > (1-\gamma) n$. 
    \end{enumerate}
    Then $\mathcal{H}$ is $3$-partite. 
\end{lemma}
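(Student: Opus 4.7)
My plan is a \emph{vertex-extendability} argument in the spirit of~\cite{LMR23unif,HLZ24,CL24}. Let $W := V(\mathcal{H}) \setminus (U_1 \cup U_2 \cup U_3)$. If $W = \emptyset$ then $V(\mathcal{H}) = U_1 \cup U_2 \cup U_3$ is already a $3$-partition into independent sets, and $\mathcal{H}$ is $3$-partite. Otherwise it suffices to show that every $v \in W$ can be added to some $U_i$ so that $U_i \cup \{v\}$ is still independent in $\mathcal{H}$: such a move only grows the $|U_i|$'s and hence preserves every inequality in~\eqref{equ:LEMMA:U1-U2-U3-3-partite}, and iterating terminates with $W = \emptyset$ and the desired $3$-partition.

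To prove this extendability claim I would argue by contradiction. Suppose some $v \in W$ admits no such addition. Then for each $i \in [3]$, since $U_i$ is independent in $\mathcal{H}$, there must exist an edge $e_i = \{v, a_i, b_i\} \in \mathcal{H}$ with $a_i \in U_i$ and $b_i \in V(\mathcal{H}) \setminus U_i$. The goal is to contradict $|L_\mathcal{H}(v)| \ge \delta(\mathcal{H}) > \delta n^2$ by a careful upper bound on $|L_\mathcal{H}(v)|$. Writing $d_i := |N_\mathcal{H}(v) \cap U_i|$ for $i \in [3]$ and $d_W := |N_\mathcal{H}(v) \cap W|$, the relevant structural facts are: $L_\mathcal{H}(v)$ is triangle-free by Fact~\ref{FACT:2-links-cancellative}~\ref{FACT:2-links-cancellative-1}; it contains no pair inside any single $U_i$ because each $U_i$ is independent; and the sizes are constrained by $d_i \le |U_i| \le \alpha n$, $|U_i| < (1-\beta)n$ (from $|U_j|+|U_k|>\beta n$), and $d_W \le |W| \le \gamma n$. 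A crude starting bound is
\begin{align*}
|L_\mathcal{H}(v)|
\;\le\; \sum_{1 \le i < j \le 3} d_i d_j \;+\; d_W(d_1+d_2+d_3) \;+\; \tfrac{1}{4}\, d_W^{2},
\end{align*}
where the last term uses the Mantel bound inside the triangle-free restriction to $W$.

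The main combinatorial step is a case analysis on the profile $(d_1, d_2, d_3, d_W)$, refined by whether each witness vertex $b_i$ lies in $W$ or in some other $U_j$, arranged so that each inequality in~\eqref{equ:LEMMA:U1-U2-U3-3-partite} rules out exactly one extremal configuration. I expect $\delta > (1-\gamma)^2/12 + \gamma\alpha$ to kill the balanced case where $N_\mathcal{H}(v)$ is spread roughly evenly across $U_1, U_2, U_3$; the two halved bounds involving $(1-\beta)^2$ and $\beta^2$ to handle the essentially bipartite configurations where one $d_i$ vanishes and the link collapses to a $K_{p,q}$-type structure (the factor $1/2$ reflecting an averaging over two symmetric subcases); the bound $\delta > \beta(1-\beta)/3 + \gamma\alpha$ to control the case where $v$'s neighborhood concentrates on a single pair $U_j \cup U_k$ of total size close to $\beta n$; and $\delta > 1/12$ to rule out the truly degenerate triangle-free extremum. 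The hard part will be this bookkeeping: identifying the correct extremal profile matched to each inequality in~\eqref{equ:LEMMA:U1-U2-U3-3-partite} and exploiting the witness edges $e_i$ to close every remaining configuration with the strict inequalities available, rather than just the crude triangle-free/Mantel estimates.
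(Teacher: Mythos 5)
Your high-level framework (pick maximal independent $U_1,U_2,U_3$, then show each $v\in W$ can be absorbed into some $U_i$) coincides with the paper's, but the proposed method for proving the extendability claim has a genuine gap. You aim to contradict $\delta(\mathcal{H})>\delta n^2$ by bounding the \emph{single} link $|L_{\mathcal{H}}(v)|$ from above via the profile $(d_1,d_2,d_3,d_W)$. That cannot close. Consider the configuration where $L_{\mathcal{H}}(v)$ consists only of pairs crossing $U_2$ and $U_3$: then the trivial upper bound is $|U_2||U_3|\le (1-\beta)^2 n^2$, and for the parameter values actually used (e.g.\ $\beta=4/(3\sqrt{5})$, $\delta=4/45$) one has $(1-\beta)^2\approx 0.163>0.089\approx\delta$. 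So no contradiction with $|L_{\mathcal{H}}(v)|>\delta n^2$ is available from the link of $v$ alone, no matter how carefully you case-split on $(d_1,d_2,d_3,d_W)$.

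The missing idea is a \emph{multi-link} argument using Fact~\ref{FACT:2-links-cancellative}~\ref{FACT:2-links-cancellative-3}. In the paper, whenever two of the sets $L_{i,j}$ are nonempty one picks endpoints $u_1,u_3$ of witness edges in different parts; since $vu_1,vu_3\in\partial\mathcal{H}$, the links $L_{\mathcal{H}}(u_1),L_{\mathcal{H}}(u_3),L_{\mathcal{H}}(v)$ are pairwise edge-disjoint (restricted to $U$), so their union has size at least $3(\delta-\alpha\gamma)n^2$, yet fits inside $|U_1||U_2|+|U_2||U_3|\le\beta(1-\beta)n^2$; that is where $\delta>\beta(1-\beta)/3+\gamma\alpha$ is spent. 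Similarly, four disjoint links produce the $(1-\gamma)^2/12+\gamma\alpha$ and $1/12$ constraints, and the halved constraints arise when $v$'s link is entirely inside one $U_j\times U_k$: then one takes a neighbor $u\in U_1$ of $v$ and uses that $L_{\mathcal{H}}(u)\cup L_{\mathcal{H}}(v)$ is a \emph{triangle-free} graph of size $\ge 2\delta n^2$ avoiding $\binom{U_1}{2}$ and the $U_1$--$(U_2\cup U_3)$ bipartite part, which forces $2\delta<\max\{(2-2\beta)^2/4+(2\beta-1)\gamma,\,\beta^2/4+(1-\beta)\gamma\}$. Your reading of the factor $1/2$ as ``averaging over two symmetric subcases'' is therefore a misdiagnosis: it is literally the result of summing two disjoint links, and that is precisely the structural ingredient your proposal never invokes. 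Without Fact~\ref{FACT:2-links-cancellative}~\ref{FACT:2-links-cancellative-3} and the union-of-links counting, the constants in~\eqref{equ:LEMMA:U1-U2-U3-3-partite} are simply too weak to contradict a single link.
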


In the next two lemmas, we establish the infeasibility of certain satisfiability problems. 

For every integer $d \ge 1$, let $\Delta_{d}$ denote the \textbf{interior} of the standard \textbf{$d$-dimensional simplex}, i.e. 
\begin{align*}
    \Delta_{d}
    \coloneqq \left\{(x_1, \ldots, x_{d+1}) \in \mathbb{R}^{d+1} \colon x_1 + \cdots + x_{d+1} = 1 \text{ and } x_i > 0 \text{ for } i \in [d+1]\right\}.
\end{align*}
\begin{lemma}\label{LEMMA:opt-1}
    There is no  point $(x, y_1, \ldots, y_5) \in \Delta_{5}$ satisfying the following constraints$\colon$
    \begin{align*}
            & \sum_{i\in [5]} y_i y_{i+1} > \frac{4}{45}, 
            \quad\text{and} \\ 
            & x (y_{i-1} + y_{i+1}) > \frac{4}{45} \quad\mathrm{for}\quad i \in [5].
    \end{align*}
\end{lemma}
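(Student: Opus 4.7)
The plan is to assume, for contradiction, that $(x, y_1, \ldots, y_5) \in \Delta_5$ satisfies all six strict inequalities, and to derive a contradiction via centering around the putative extremizer $(1/3, 2/15, \ldots, 2/15)$.

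Summing the five wheel inequalities yields $2x(1-x) > 4/9$, which localizes $x$ to the open interval $(1/3, 2/3)$. Introducing the centered variables $z_i \coloneqq y_i - (1-x)/5$, so that $\sum_i z_i = 0$, a direct substitution rewrites the six constraints as
\begin{align*}
\sum_{i \in [5]} z_i z_{i+1} > M_x \coloneqq \frac{(3x-1)(5-3x)}{45}, \quad z_{i-1} + z_{i+1} > c_x \coloneqq \frac{2(3x-1)(3x-2)}{45 x} \quad \forall i \in [5],
\end{align*}
where $M_x > 0$ and $c_x < 0$ on $(1/3, 2/3)$. Informally, the wheel constraints push every $z_i$ toward zero, while the cycle constraint demands $\sum_i z_i z_{i+1}$ be bounded well away from zero.

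Summing the three wheel inequalities indexed by $i \in \{j-1, j, j+1\}$ collapses, using $\sum_i z_i = 0$, to $z_j > 3 c_x$; summing the two indexed by $\{j, j+1\}$ gives $-z_{j+3} > 2 c_x$. By cyclic symmetry every $z_j$ lies in $(3 c_x, -2 c_x)$, and since $c_x < 0$ this forces $|z_j| \le 3|c_x|$, hence $\|z\|_2^2 \le 45 c_x^2$. To bound the cycle sum from above I would use the spectrum of $C_5$: the adjacency matrix $A$ of $C_5$ has eigenvalues $\{2, (\sqrt{5}-1)/2, (\sqrt{5}-1)/2, -(\sqrt{5}+1)/2, -(\sqrt{5}+1)/2\}$ with $\mathbf{1}$ spanning the top eigenspace; since $z \perp \mathbf{1}$,
\begin{align*}
\sum_{i} z_i z_{i+1} = \tfrac{1}{2} z^{\top} A z \le \tfrac{\sqrt{5}-1}{4} \|z\|_2^2 \le \tfrac{45(\sqrt{5}-1)}{4}\, c_x^2.
\end{align*}

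Combining with $\sum_i z_i z_{i+1} > M_x$ and substituting the explicit formulas, then clearing the positive factor $45 x^2/(3x-1)$, reduces everything to the scalar inequality $x^2(5 - 3x) < (\sqrt{5}-1)(3x-1)(3x-2)^2$ at some $x \in (1/3, 2/3)$. The main obstacle is ruling this out, but it is a routine univariate check: the LHS is increasing on $[1/3, 2/3]$ (derivative $x(10-9x) > 0$) and hence bounded below by $4/9$, while the RHS attains its maximum at $x = 4/9$ with value $4(\sqrt{5}-1)/27 \approx 0.18 < 4/9$. Thus the reduced inequality fails throughout $(1/3, 2/3)$, giving the desired contradiction. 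One may replace the spectral step by the cruder bound $\sum_i z_i z_{i+1} \le \|z\|_2^2$, which still suffices with slightly less slack.
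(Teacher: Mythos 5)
Your proof is correct, and it takes a genuinely different route from the paper's. The paper changes variables via $\mathbf{z} = A_2\mathbf{y}$ (with $A_2$ the adjacency matrix of $C_5$), invokes the explicit matrix identity $(A_2^{-1})^{T}BA_2^{-1} = \tfrac{1}{2}A_Q$ where $Q$ is the pentagram $\{13,24,35,41,52\}$, and then applies its auxiliary Lemma~\ref{LEMMA:matrix-d-regular} (a coordinate-wise lower bound for $d$-regular quadratic forms) to get a \emph{lower} bound on $\sum z_iz_{i+2}$; this reduces to the factored inequality $(1-3x)(135x^3-225x^2+96x-16)<0$, which forces $x<1/3$. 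You instead center $z_i \coloneqq y_i - (1-x)/5$ so that $z\perp\mathbf{1}$, extract an $\ell_\infty$ bound $|z_j|<3|c_x|$ directly from the wheel constraints via telescoping with $\sum z_i = 0$, and then \emph{upper}-bound the cycle form using the second eigenvalue $(\sqrt5-1)/2$ of $C_5$ on $\mathbf{1}^\perp$; this reduces to $x^2(5-3x)<(\sqrt5-1)(3x-1)(3x-2)^2$, which you refute by comparing the monotone lower bound $4/9$ for the left side against the global maximum $4(\sqrt5-1)/27$ of the right side. Your route is more self-contained: it avoids both the inverse-matrix computation (Lemma~\ref{LEMMA:inverse-matrix-Gamma-d}) and the auxiliary Lemma~\ref{LEMMA:matrix-d-regular}, relying only on the standard spectrum of $C_5$. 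The trade-off is that the paper's machinery is reused for Lemma~\ref{LEMMA:opt-2}, where a spectral argument restricted to $\mathbf{1}^\perp$ would be less clean because the relevant graphs $\Gamma_d$ do not have as simple a second eigenvalue; so the paper's approach is part of a unified framework, whereas yours is tailored to the pentagon. Your side remark that the cruder bound $\sum z_iz_{i+1} \le \|z\|_2^2$ still suffices is also correct, though the resulting cubic $-111x^3+185x^2-96x+16$ has minimum only about $0.09$ on $(1/3,2/3)$, so the spectral refinement gives a noticeably more comfortable margin.
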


Recall that the graph $\Gamma_{d}$ was defined in Section~\ref{SEC:prelim}.
\begin{lemma}\label{LEMMA:opt-2}
    Let $d \in [2,12]$ be an integer. There is no  point $(y_1, \ldots, y_{3d-1}) \in \mathbb{R}^{3d-1}$ with $\min_{i\in [3d-1]}y_i > 0$ that satisfies the following constraints$\colon$
    \begin{align*}
        & \sum_{ij \in \Gamma_{d}} y_iy_j > \frac{4}{45},  \\
        & \sum_{j \in N_{\Gamma_{d}}(i)} y_j > \frac{6}{17} \sum_{i\in [3d-1]} y_i \quad\mathrm{for}\quad i \in [3d-1], \quad\text{and}\\
        & \sum_{i \in [3d-1]} y_i < 3- \frac{16}{3\sqrt{5}}. 
    \end{align*}
\end{lemma}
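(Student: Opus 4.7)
The plan is a unified three-step argument.

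\textbf{Step 1 (handles $d \ge 6$).} Summing the degree constraint $\sum_{j \in N_{\Gamma_d}(i)} y_j > \frac{6s}{17}$ (with $s \coloneqq \sum_i y_i$) over all $i \in [3d-1]$, and using the $d$-regularity of $\Gamma_d$, gives $ds > \frac{6(3d-1)s}{17}$, which forces $d < 6$ and handles the range $d \in \{6, 7, \ldots, 12\}$.

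\textbf{Step 2 (pointwise bound for $d \in \{2, 3, 4, 5\}$).} Let $v$ achieve $y_v = \max_i y_i$ and set $NN(v) \coloneqq [3d-1] \setminus (\{v\} \cup N_{\Gamma_d}(v))$ (of size $2d - 2$). The Andr\'asfai graph $\Gamma_d$ has diameter $2$ (verifiable directly for $d \le 5$), so $\alpha_k \coloneqq |N_{\Gamma_d}(k) \cap N_{\Gamma_d}(v)| \ge 1$ for every $k \in NN(v)$. Summing the degree constraint over $j \in NN(v)$ and expanding using the triangle-freeness of $\Gamma_d$ (so that $N_{\Gamma_d}(k) \setminus \{v\} \subset NN(v)$ for each $k \in N_{\Gamma_d}(v)$) gives, writing $f_v = \sum_{j \in N_{\Gamma_d}(v)} y_j$,
\begin{align*}
    \frac{12(d-1)s}{17}
    < \sum_{j \in NN(v)} f_j
    = (d-1) f_v + \sum_{k \in NN(v)} y_k (d - \alpha_k)
    \le (d-1)(s - y_v),
\end{align*}
which rearranges to $y_v < \frac{5s}{17}$.

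\textbf{Step 3 (contradiction).} Setting $g_i \coloneqq f_i - \frac{6s}{17} > 0$, so that $\sum_i g_i = \frac{(6-d)s}{17}$, we compute
\begin{align*}
    2 \sum_{ij \in \Gamma_d} y_i y_j
    = \sum_i y_i f_i
    = \frac{6s^2}{17} + \sum_i y_i g_i
    \le \frac{6s^2}{17} + y_v \cdot \frac{(6-d)s}{17}
    < \frac{6s^2}{17} + \frac{5s}{17} \cdot \frac{(6-d)s}{17}
    = \frac{(132-5d)s^2}{289}.
\end{align*}
Combined with $s^2 < (661 - 288\sqrt 5)/45$ from the third constraint, this yields $\sum_{ij \in \Gamma_d} y_i y_j < \frac{(132-5d)s^2}{578} < \frac{4}{45}$ provided $(132-5d)(661 - 288\sqrt 5) < 2312$, which is verified by direct calculation for each $d \in \{2, 3, 4, 5\}$ (tightest at $d = 2$, where the LHS equals $122 (661 - 288\sqrt 5) \approx 2076 < 2312$). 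This contradicts the first constraint. The main obstacle is Step 2: recognizing that summing the degree constraint over the non-neighbors of the max-weight vertex—and invoking the diameter-$2$ property of Andr\'asfai graphs—gives the key pointwise bound $y_v < \frac{5s}{17}$ that closes the narrow margin in Step 3.
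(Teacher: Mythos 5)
Your proof is correct and takes a genuinely different route from the paper's. The paper proves Lemma~\ref{LEMMA:opt-2} by a linear-algebraic change of variables: setting $\mathbf{z} = A_d\mathbf{y}$, inverting via the explicit formula $A_d^{-1} = W_{3d-1} - \tfrac{1}{d}J_{3d-1}$ (Lemma~\ref{LEMMA:inverse-matrix-Gamma-d}), rewriting $\sum_{ij\in\Gamma_d} y_iy_j$ in terms of $\mathbf{z}$ via Lemma~\ref{LEMMA:matrix-multiplication}, and then applying the quadratic-form bound of Lemma~\ref{LEMMA:matrix-d-regular} to $\overline{C}_{3d-1}$. Your argument bypasses the matrix inversion entirely: Step~1 disposes of $d\ge 6$ just by averaging the degree constraint, and for $d\le 5$ you extract the pointwise bound $y_v < \tfrac{5s}{17}$ on the maximum coordinate by summing the degree constraint over the non-neighborhood $NN(v)$ and using only that $\Gamma_d$ is $d$-regular, triangle-free, and has diameter $2$ (equivalently, is maximal triangle-free). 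Step~3 then closes the argument with a one-line rearrangement. This is more elementary — it avoids both circulant-matrix lemmas — and arguably cleaner, though it leans on a small piece of structural information (diameter $2$ of $\Gamma_d$, which indeed follows from maximality of $\Gamma_d$ as a triangle-free graph and is easy to verify for $d\le 5$) that the paper's linear-algebra route does not need. One small presentational suggestion: when you split $d\ge 6$ from $d\le 5$ in Step~1, note explicitly that $s>0$ (since all $y_i>0$) before cancelling, and in Step~2 justify diameter~$2$ by maximal triangle-freeness rather than appealing to "direct verification."
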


\subsection{Proof of Lemma~\ref{LEMMA:U1-U2-U3-3-partite}}
We will use the following simple lemma in the proof of Lemma~\ref{LEMMA:U1-U2-U3-3-partite}. 
Recall that $L_{\mathcal{H}}(v,S)$ was defined in Section~\ref{SEC:prelim}. 
\begin{lemma}\label{LEMMA:cancellative-link-in-S}
    Suppose that $\mathcal{H}$ is a $\{K_4^{3-}, F_5\}$-free $3$-graph and $S\subseteq V(\mathcal{H})$ is a vertex set. 
    Then for every $v\in V(\mathcal{H})$, 
    \begin{align*}
        |L_{\mathcal{H}}(v,S)|
        \ge |L_{\mathcal{H}}(v)| - \alpha(\mathcal{H}) \cdot |V(\mathcal{H}) \setminus S|. 
    \end{align*}
\end{lemma}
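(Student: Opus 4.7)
The plan is to pass through the complementary set $L_{\mathcal{H}}(v) \setminus L_{\mathcal{H}}(v,S)$ and bound its size by $\alpha(\mathcal{H}) \cdot |V(\mathcal{H})\setminus S|$, which immediately gives the stated inequality.

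First I would observe that every pair in $L_{\mathcal{H}}(v) \setminus L_{\mathcal{H}}(v,S)$ contains at least one vertex in $V(\mathcal{H}) \setminus S$. Consequently,
\begin{align*}
    |L_{\mathcal{H}}(v)| - |L_{\mathcal{H}}(v,S)|
    = |L_{\mathcal{H}}(v) \setminus L_{\mathcal{H}}(v,S)|
    \le \sum_{w \in V(\mathcal{H})\setminus S} d_{L_{\mathcal{H}}(v)}(w),
\end{align*}
since summing the degrees in $L_{\mathcal{H}}(v)$ over the vertices of $V(\mathcal{H})\setminus S$ counts each such pair at least once.

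Next, for every $w \in V(\mathcal{H})$, one has $N_{L_{\mathcal{H}}(v)}(w) = N_{\mathcal{H}}(vw)$ by the definitions of link and neighborhood, so $d_{L_{\mathcal{H}}(v)}(w) = |N_{\mathcal{H}}(vw)|$. If $\{v,w\} \notin \partial\mathcal{H}$ then this quantity is $0$; otherwise, Fact~\ref{FACT:2-links-cancellative}~\ref{FACT:2-links-cancellative-2} yields that $N_{\mathcal{H}}(vw)$ is independent in $\mathcal{H}$, and hence $|N_{\mathcal{H}}(vw)| \le \alpha(\mathcal{H})$. Combining these two bounds gives $d_{L_{\mathcal{H}}(v)}(w) \le \alpha(\mathcal{H})$ for every $w \in V(\mathcal{H})\setminus S$, so the displayed sum is at most $\alpha(\mathcal{H}) \cdot |V(\mathcal{H})\setminus S|$, as desired.

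There is no real obstacle here: the entire argument is a one-line degree count, and the only ingredient from the $\{K_4^{3-}, F_5\}$-free assumption that is used is the independence of two-vertex neighborhoods provided by Fact~\ref{FACT:2-links-cancellative}~\ref{FACT:2-links-cancellative-2}. The only thing to be slightly careful about is not double-using cancellativity; the triangle-freeness of $L_{\mathcal{H}}(v)$ from Fact~\ref{FACT:2-links-cancellative}~\ref{FACT:2-links-cancellative-1} is not needed for this lemma.
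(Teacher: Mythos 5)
Your proof is correct and is essentially the same as the paper's: both bound $|L_{\mathcal{H}}(v)| - |L_{\mathcal{H}}(v,S)|$ by $\sum_{w\in V(\mathcal{H})\setminus S} d_{L_{\mathcal{H}}(v)}(w)$ and then use Fact~\ref{FACT:2-links-cancellative}~\ref{FACT:2-links-cancellative-2} to bound each summand by $\alpha(\mathcal{H})$. Your closing remark that only part~\ref{FACT:2-links-cancellative-2} of that fact is needed is also accurate.
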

\begin{proof}[Proof of Lemma~\ref{LEMMA:cancellative-link-in-S}]
    Let $V\coloneqq V(\mathcal{H})$,  $T \coloneqq V \setminus S$, and $G \coloneqq L_{\mathcal{H}}(v)$. 
    By Fact~\ref{FACT:2-links-cancellative}~\ref{FACT:2-links-cancellative-2}, for every $u \in V\setminus \{v\}$, the set $N_{G}(u) = N_{\mathcal{H}}(uv)$ is independent (or empty) in $\mathcal{H}$ and thus has size at most $\alpha(\mathcal{H})$. 
    Therefore, 
    \begin{align*}
        |L_{\mathcal{H}}(v,S)|
        = |G[S]|
        \ge |G| - \sum_{u\in T} d_{G}(u)
        \ge |L_{\mathcal{H}}(v)| - |T|\cdot \alpha(\mathcal{H}), 
    \end{align*}
    proving Lemma~\ref{LEMMA:cancellative-link-in-S}. 
\end{proof}

Let us now present the proof of Lemma~\ref{LEMMA:U1-U2-U3-3-partite}. 
\begin{proof}[Proof of Lemma~\ref{LEMMA:U1-U2-U3-3-partite}]
    Let $\alpha, \beta, \delta, \gamma > 0$ be real numbers satisfying~\eqref{equ:LEMMA:U1-U2-U3-3-partite}.
    Let $\mathcal{H}$ and $U_1, U_2, U_3$ be as assumed in the lemma. 
    Let $G \coloneqq \partial\mathcal{H}$, $V\coloneqq V(\mathcal{H})$, $U \coloneqq U_1 \cup U_2 \cup U_3$, and $T \coloneqq V\setminus U$. 
    From Assumption~\ref{LEMMA:U1-U2-U3-3-partite-3}, we have $|T| < \gamma n$, and from Assumption~\ref{LEMMA:U1-U2-U3-3-partite-2}, we have 
    \begin{align}\label{equ:vtx-extend-Ui-upper}
        (2\beta - 1)n
        < |U_i| 
        < (1-\beta)n
        \quad\text{for every}\quad i \in [3].
    \end{align}
    %
    Let $\xi < \gamma$ be the real number such that $|T| = \xi n$. 

    Since $\alpha, \beta, \delta, \gamma$ are fixed, we may assume that $U_1, U_2, U_3$ are all maximal subject to the assumptions in the lemma. 
    We are done if $T = \emptyset$, so we may assume that there exists a vertex $v\in T$. 
    For each $\{i,j\} \in \binom{[3]}{2}$ and $k \in [3]$, let 
    \begin{align*}
        L_{i,j} 
        \coloneqq \left\{e \in L_{\mathcal{H}}(v) \colon |e\cap U_i| = |e\cap U_j| = 1\right\}
        \quad\text{and}\quad 
        N_{k} 
        \coloneqq N_{G}(v) \cap U_k. 
    \end{align*}
    Since $U_1, U_2, U_3$ are all independent, the induced subgraph $G[U_1\cup U_2\cup U_3]$ is $3$-partite, and in particular, $L_{\mathcal{H}}(v,U) = L_{1,2} \cup L_{1,3} \cup L_{2,3}$. 

    \begin{claim}\label{CLAIM:vtx-extend-Lij}
        At most one member of $\{L_{1,2}, L_{1,3}, L_{2,3}\}$ is nonempty.
    \end{claim}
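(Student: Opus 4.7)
The plan is to argue by contradiction: suppose at least two of $L_{1,2}, L_{1,3}, L_{2,3}$ are nonempty. By the symmetry among $U_1, U_2, U_3$, I may assume $L_{1,2}$ and $L_{1,3}$ are both nonempty, witnessed by edges $va_1 b, va_2 c \in \mathcal{H}$ with $a_1, a_2 \in U_1$, $b \in U_2$, and $c \in U_3$. The goal is to expose a copy of $K_{4}^{3-}$ or $F_5$ in $\mathcal{H}$, contradicting $\{K_{4}^{3-}, F_5\}$-freeness.

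The core local observation is the following. If $vab$ and $vac$ are two edges of $\mathcal{H}$ sharing the pair $\{v,a\}$, then any edge $bcw \in \mathcal{H}$ immediately yields a forbidden subgraph: if $w \in \{v, a\}$, the three edges $\{vab, vac, bcw\}$ all lie on the four vertices $\{v,a,b,c\}$ and share the common vertex $w$, forming a copy of $K_{4}^{3-}$; if $w \notin \{v,a,b,c\}$, the three edges form a copy of $F_5$. Hence, in the shared-witness situation, $\{b,c\} \notin \partial\mathcal{H}$, and in fact $\{b',c'\} \notin \partial\mathcal{H}$ for every pair $(b',c') \in (N_{\mathcal{H}}(va) \cap U_2) \times (N_{\mathcal{H}}(va) \cap U_3)$.

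The first step is therefore to reduce the general situation to the shared-witness case $a_1 = a_2 = a$. When $a_1 \neq a_2$, I would apply Fact~\ref{FACT:2-links-cancellative}\ref{FACT:2-links-cancellative-3} to the pair $\{v,b\} \in \partial\mathcal{H}$ to deduce $a_2 bc \notin \mathcal{H}$, and likewise $a_1 bc \notin \mathcal{H}$ via $\{v,c\}$; from here, a short case analysis on the location of any third vertex completing $\{b,c\}$ to an edge of $\mathcal{H}$ either directly produces an $F_5$ of the form $\{va_i b, a_i bc, va_j c\}$ or allows one to replace the witnesses so that $a_1 = a_2$, using Lemma~\ref{LEMMA:cancellative-link-in-S} (together with $\delta(\mathcal{H}) > \delta n^2$ and $|T| < \gamma n$) to guarantee that $L_{\mathcal{H}}(v, U)$ is rich enough to supply such a common $U_1$-witness. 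The second step closes the shared-witness case by applying the local observation above to all pairs in $(N_{\mathcal{H}}(va) \cap U_2) \times (N_{\mathcal{H}}(va) \cap U_3)$, combining this with the independence of $N_{\mathcal{H}}(va)$ in $\mathcal{H}$ (Fact~\ref{FACT:2-links-cancellative}\ref{FACT:2-links-cancellative-2}) and the bound $\alpha(\mathcal{H}) \le \alpha n$, and then playing these structural consequences off against the minimum-degree bound $\delta(\mathcal{H}) > \delta n^2$ and the size constraints~\eqref{equ:vtx-extend-Ui-upper} to obtain the desired quantitative contradiction.

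The main obstacle I anticipate is Step~1: the reduction from the general case $a_1 \ne a_2$ to the shared-witness case. The triangle-freeness of $L_{\mathcal{H}}(v)$ alone does not force a common $U_1$-vertex among the edges of $L_{1,2}$ and $L_{1,3}$, and one must carefully combine Facts~\ref{FACT:2-links-cancellative}\ref{FACT:2-links-cancellative-1}--\ref{FACT:2-links-cancellative-3} with the numerical inequalities in~\eqref{equ:LEMMA:U1-U2-U3-3-partite} to close the gap. The shared-witness case itself, once reached, should follow cleanly from the $K_{4}^{3-}$/$F_5$ dichotomy described above.
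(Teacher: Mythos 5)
Your approach is genuinely different from the paper's, and as you yourself anticipate, it has a real gap. The paper's proof is a pure counting argument: it uses Fact~\ref{FACT:2-links-cancellative}~\ref{FACT:2-links-cancellative-3} (for $\{u,w\} \in \partial\mathcal{H}$ the links $L_{\mathcal{H}}(u)$ and $L_{\mathcal{H}}(w)$ are disjoint) to produce three or four pairwise edge-disjoint link graphs $L_{\mathcal{H}}(\cdot, U)$, attached to $v$ and to endpoints of the fixed edges in $L_{1,2}$, $L_{2,3}$ (and, in Case~2, $L_{1,3}$). Their union lives inside the bipartite graph between $U_2$ and $U_1\cup U_3$ in Case~1 or the tripartite graph on $U$ in Case~2, and the parameter constraints in~\eqref{equ:LEMMA:U1-U2-U3-3-partite}, together with Lemma~\ref{LEMMA:cancellative-link-in-S}, are chosen exactly to make the resulting count impossible. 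No concrete copy of $K_{4}^{3-}$ or $F_5$ is ever located.

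Your proposal instead tries to exhibit $K_{4}^{3-}$ or $F_5$ directly, and the reduction from $a_1 \neq a_2$ to the shared-witness case does not go through. With $va_1 b, va_2 c \in \mathcal{H}$ and $a_1 \neq a_2$, Fact~\ref{FACT:2-links-cancellative}~\ref{FACT:2-links-cancellative-3} indeed gives $a_1 bc, a_2 bc \notin \mathcal{H}$, but there is no reason why $\{b,c\} \in \partial\mathcal{H}$ at all; and even if $bcw \in \mathcal{H}$ for some $w$, the two edges $va_1 b$ and $va_2 c$ share only the single vertex $v$, not a pair, so no choice of $w$ completes them to a copy of $F_5$ (which needs two edges on a common pair) and no clean case analysis emerges. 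Nothing in~\eqref{equ:LEMMA:U1-U2-U3-3-partite} or Lemma~\ref{LEMMA:cancellative-link-in-S} forces the two witnessing edges of $L_{\mathcal{H}}(v)$ to share a $U_1$-vertex, so the claimed reduction needs a new idea you have not supplied. The quantitative finish of the shared-witness case is likewise only sketched. I would switch to the disjointness-plus-counting route, which sidesteps the hunt for concrete forbidden subgraphs entirely and is what the chosen constants in~\eqref{equ:LEMMA:U1-U2-U3-3-partite} are tuned for.
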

    \begin{proof}[Proof of Claim~\ref{CLAIM:vtx-extend-Lij}]
    Suppose to the contrary that at least two members of $\{L_{1,2}, L_{1,3}, L_{2,3}\}$ are nonempty. By symmetry, we may assume that $L_{1,2} \neq \emptyset$ and $L_{2,3} \neq \emptyset$. 
    
    Recall from Lemma~\ref{LEMMA:cancellative-link-in-S} that for every $u \in U$, 
    \begin{align}\label{equ:Lemma51-u-U}
        L_{\mathcal{H}}(u,U)
        \ge L_{\mathcal{H}}(u) - |T| \cdot \alpha(\mathcal{H})
        \ge \delta(\mathcal{H}) - \xi n \cdot \alpha n
        \ge (\delta - \alpha \xi) n^2. 
    \end{align}

    \medskip 

    \textbf{Case 1:} $L_{1,3} = \emptyset$. 

    Fix an edge $u_1 u_2 \in L_{1,2}$ and an edge $\hat{u}_{2}u_3 \in L_{2,3}$. Assume that $(u_1, u_2, \hat{u}_2, u_3) \in U_1 \times U_2 \times U_2 \times U_3$ (it is possible that $u_2 = \hat{u}_2$). Since $vu_1, vu_3 \in \partial\mathcal{H}$, it follows from Fact~\ref{FACT:2-links-cancellative}~\ref{FACT:2-links-cancellative-3} that $L_{\mathcal{H}}(u_1) \cap L_{\mathcal{H}}(v) = L_{\mathcal{H}}(u_3) \cap L_{\mathcal{H}}(v) = \emptyset$. 
    Additionally, since $U_1, U_2, U_3$ are independent, $L_{\mathcal{H}}(u_1, U) \cap L_{\mathcal{H}}(u_3, U) = \emptyset$ as well. 
    So, by~\eqref{equ:Lemma51-u-U}, we obtain 
    \begin{align*}
        |L_{\mathcal{H}}(u_1, U) \cup L_{\mathcal{H}}(u_3, U) \cup L_{\mathcal{H}}(v, U)| 
        > 3(\delta - \alpha \xi) n^2
        \ge 3(\delta - \alpha \gamma) n^2.
    \end{align*}
    Since $L_{1,3} = \emptyset$, we have $|L_{\mathcal{H}}(u_1, U) \cup L_{\mathcal{H}}(u_3, U) \cup L_{\mathcal{H}}(v, U)| \le |U_1||U_2| + |U_2||U_3|$. 
    Therefore, it follows from the assumption $|U_2| < (1-\beta)n < n/2$ and the inequality above that 
    \begin{align*}
        3(\delta - \alpha \gamma) n^2
        & < |U_1||U_2| + |U_2||U_3| \\
        & 
         \le |U_2| \left(n - |U_2|\right) 
         < (1-\beta) n \cdot \left(n - (1-\beta) n \right)
         = \beta (1-\beta) n^2.
    \end{align*}
    This means that $\delta < \frac{\beta (1-\beta)}{3} + \alpha \gamma$, contradicting~\eqref{equ:LEMMA:U1-U2-U3-3-partite}. 

    \medskip

    \textbf{Case 2:} $L_{1,3} \neq \emptyset$. 

    Fix edges $u_1 u_2 \in L_{1,2}$, $\hat{u}_2 u_3 \in L_{2,3}$, and $\hat{u}_1 \hat{u}_3 \in L_{1,3}$. 
    Assume that $\{u_i, \hat{u}_i\} \in U_i$ for $i \in [3]$. 
    Similar to Case 1, the graphs $L_{\mathcal{H}}(v, U)$, $L_{\mathcal{H}}(u_1, U)$, $L_{\mathcal{H}}(u_2, U)$, $L_{\mathcal{H}}(u_3, U)$ are pairwise edge-disjoint. 
    It follows from~\eqref{equ:Lemma51-u-U} that 
    \begin{align*}
        |U_1||U_2|+|U_2||U_3|+|U_3||U_1|
        \ge |L_{\mathcal{H}}(v, U) \cup L_{\mathcal{H}}(u_1, U) \cup \cdots \cup L_{\mathcal{H}}(u_3, U)|
        \ge 4(\delta - \alpha \xi) n^2.
    \end{align*}
    Combining this with the inequality
    \begin{align*}
        |U_1||U_2|+|U_2||U_3|+|U_3||U_1| 
        \le \frac{(n-|T|)^2}{3}
        = \frac{(1-\xi)^2n^2}{3}, 
    \end{align*}
    we obtain 
    \begin{align*}
        \frac{(1-\xi)^2}{3} - 4(\delta - \alpha \xi) \ge 0. 
    \end{align*}
    We claim that this is impossible. 
    Indeed, let $f(x) \coloneqq \frac{(1-x)^2}{3} - 4(\delta - \alpha x)$. 
    Since $f(x)$ is a quadratic function with a positive coefficient for $x^2$, we have 
    \begin{align*}
        \max_{x\in [0,\gamma]}f(x)
        = \max\left\{f(0),~f(\gamma)\right\}
        = \max\left\{\frac{1}{3} - 4\delta,~\frac{(1-\gamma)^2}{3} - 4(\delta - \alpha \gamma)\right\}. 
    \end{align*}
    It follows from~\eqref{equ:LEMMA:U1-U2-U3-3-partite} that $\frac{1}{3} - 4\delta < \frac{1}{3} - 4\cdot \frac{1}{12} = 0$ and $\frac{(1-\gamma)^2}{3} - 4(\delta - \alpha \gamma) = 4\left(\frac{(1-\gamma)^2}{12} + \alpha \gamma - \delta\right) < 0$. 
    Therefore, $\frac{(1-\xi)^2}{3} - 4(\delta - \alpha \xi) = f(\xi) \le \max_{x\in [0,\gamma]}f(x) < 0$, as desired. This completes the proof of Claim~\ref{CLAIM:vtx-extend-Lij}.
    \end{proof}
    By Claim~\ref{CLAIM:vtx-extend-Lij} and symmetry, we may assume that $L_{1,2} = L_{1,3} = \emptyset$. Next, we show that $N_1 = \emptyset$. 
    \begin{claim}\label{CLAIM:vtx-extend-N1}
        We have $N_1 = \emptyset$.
    \end{claim}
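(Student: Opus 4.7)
The plan is to argue by contradiction: assume some $u_1\in N_1$. Since $L_{1,2}=L_{1,3}=\emptyset$ and $U_1$ is independent in $\mathcal{H}$, the hyperedge witnessing $\{v,u_1\}\in\partial\mathcal{H}$ must have its third vertex in $T$; call this edge $\{v,u_1,t\}$.

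The first structural step is to observe that $L_\mathcal{H}(u_1,U)\subseteq U_2\times U_3$, mirroring the bound for $v$: any pair $\{a,b\}\in L_\mathcal{H}(u_1,U)$ yields a hyperedge $\{u_1,a,b\}\in\mathcal{H}$, and independence of $U_1$ forbids $a$ or $b$ in $U_1$ while independence of $U_2,U_3$ forbids $\{a,b\}\subseteq U_2$ or $\{a,b\}\subseteq U_3$. Moreover the three pairs $\{v,u_1\},\{v,t\},\{u_1,t\}$ all lie in $\partial\mathcal{H}$ (witnessed by $\{v,u_1,t\}\in\mathcal{H}$), so Fact~\ref{FACT:2-links-cancellative}\ref{FACT:2-links-cancellative-3} gives that $L_\mathcal{H}(v),L_\mathcal{H}(u_1),L_\mathcal{H}(t)$ are pairwise edge-disjoint. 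By Lemma~\ref{LEMMA:cancellative-link-in-S} each of $L_\mathcal{H}(x,U)$ for $x\in\{v,u_1,t\}$ has at least $(\delta-\alpha\xi)n^2$ edges; positivity follows from $\delta>\beta(1-\beta)/3+\alpha\gamma>\alpha\gamma\ge\alpha\xi$, so in particular $L_\mathcal{H}(t,U)\neq\emptyset$, and the analogue of Claim~\ref{CLAIM:vtx-extend-Lij} applied to $t$ shows that exactly one of $L_{1,2}^t,L_{1,3}^t,L_{2,3}^t$ is nonempty.

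From here I would split into two cases. If $L_\mathcal{H}(t,U)\subseteq U_2\times U_3$, all three pairwise edge-disjoint links sit inside the complete bipartite graph on $U_2\sqcup U_3$, so
\[
3(\delta-\alpha\xi)n^2\le|U_2||U_3|\le\left(\frac{|U_2|+|U_3|}{2}\right)^2\le(1-\beta)^2n^2,
\]
the last inequality following from $|U_1|>(2\beta-1)n$, which forces $|U_2|+|U_3|\le(2-2\beta)n$. This gives $\delta\le(1-\beta)^2/3+\alpha\gamma<\beta(1-\beta)/3+\alpha\gamma$ (using $\beta>1/2$ so that $1-\beta<\beta$), contradicting~\eqref{equ:LEMMA:U1-U2-U3-3-partite}. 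Otherwise $L_\mathcal{H}(t,U)\subseteq U_1\times U_j$ for some $j\in\{2,3\}$, say $j=2$, and the three pairwise disjoint links together occupy at most
\[
|U_2||U_3|+|U_1||U_2|=|U_2|(|U_1|+|U_3|)\le|U_2|(n-|U_2|)\le\beta(1-\beta)n^2,
\]
where the final inequality uses $|U_2|<(1-\beta)n<n/2$ together with the monotonicity of $x(n-x)$ on $[0,n/2]$. Hence $\delta\le\beta(1-\beta)/3+\alpha\gamma$, again contradicting~\eqref{equ:LEMMA:U1-U2-U3-3-partite}.

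The main hurdle will be checking that Claim~\ref{CLAIM:vtx-extend-Lij} indeed transfers verbatim from $v$ to any vertex of $T$ (its proof uses only that the vertex lies outside $U_1\cup U_2\cup U_3$, so this is routine), and keeping careful track of the feasibility constraints $|U_i|\in((2\beta-1)n,(1-\beta)n)$ so that the size bounds on $|U_i||U_j|$ used above are indeed the correct extremes; in particular one must verify that the parabola $x(n-x)$ is indeed monotone on the relevant interval, which is where the assumption $\beta>1/2$ becomes essential.
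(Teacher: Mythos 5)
Your proof is correct, but it takes a genuinely different route from the paper's. The paper considers the union $L=L_{\mathcal{H}}(u)\cup L_{\mathcal{H}}(v)$ of the two \emph{unrestricted} links, deduces from Fact~\ref{FACT:2-links-cancellative}~\ref{FACT:2-links-cancellative-1} and Fact~\ref{FACT:3-links-F5-free}~\ref{FACT:3-links-F5-free-2} that $L$ is triangle-free, applies Mantel's theorem to $L[T\cup U_2\cup U_3]$, adds the trivial bound $|U_1||T|$ for the $U_1$--$T$ crossings (using that $L$ has no $U_1$--$(U_2\cup U_3)$ edges), and then optimizes the resulting quadratic in $|U_1|$; this derives a contradiction with the \emph{third} line of~\eqref{equ:LEMMA:U1-U2-U3-3-partite}. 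You instead bring the third vertex $t$ of the witnessing hyperedge fully into play: you observe that $L_{\mathcal{H}}(v,U),L_{\mathcal{H}}(u_1,U),L_{\mathcal{H}}(t,U)$ are pairwise edge-disjoint by Fact~\ref{FACT:2-links-cancellative}~\ref{FACT:2-links-cancellative-3}, each of size at least $(\delta-\alpha\xi)n^2$ by Lemma~\ref{LEMMA:cancellative-link-in-S}, then rerun the argument of Claim~\ref{CLAIM:vtx-extend-Lij} with $t$ in the role of $v$ (which indeed works, since that proof only uses that the base vertex lies in $T$) to localize $L_{\mathcal{H}}(t,U)$ to a single bipartite block, and close with a short case analysis. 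This contradicts only the \emph{second} line of~\eqref{equ:LEMMA:U1-U2-U3-3-partite}; in particular, your argument shows the third constraint in~\eqref{equ:LEMMA:U1-U2-U3-3-partite} is not actually needed for this claim, which is a mild simplification of Lemma~\ref{LEMMA:U1-U2-U3-3-partite}. Both proofs are valid; yours is more economical in hypotheses and reuses the machinery of Claim~\ref{CLAIM:vtx-extend-Lij} rather than invoking Mantel's theorem on the union of two links, while the paper's is perhaps more direct once the triangle-freeness of $L_{\mathcal{H}}(u)\cup L_{\mathcal{H}}(v)$ is granted.
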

    \begin{proof}[Proof of Claim~\ref{CLAIM:vtx-extend-N1}]
        Suppose to the contrary that there exists a vertex $u \in N_1$. 
        Let $w\in V(\mathcal{H})$ be a vertex such that $\{u,v,w\}$ is an edge in $\mathcal{H}$. It follows from the assumption $L_{1,2} = L_{1,3} = \emptyset$ that $w\in T$. 

        Let $L \coloneqq L_{\mathcal{H}}(u) \cup L_{\mathcal{H}}(v)$. 
        It follows from Fact~\ref{FACT:2-links-cancellative}~\ref{FACT:2-links-cancellative-3} that $L_{\mathcal{H}}(u) \cap L_{\mathcal{H}}(v) = \emptyset$, and hence, 
        \begin{align}\label{equ:vtx-extend-L-lower}
            |L|
            =|L_{\mathcal{H}}(u)| + |L_{\mathcal{H}}(v)|
            \ge 2 \delta. 
        \end{align}
        On the other hand, it follows from Fact~\ref{FACT:2-links-cancellative}~\ref{FACT:2-links-cancellative-1} and Fact~\ref{FACT:3-links-F5-free}~\ref{FACT:3-links-F5-free-2} that $L$ is triangle-free. Hence, by Mantel's theorem, the induced subgraph of $L$ on $T\cup U_2 \cup U_3$ satisfies 
        \begin{align*}
            |L[T\cup U_2 \cup U_3]|
            \le \frac{|T\cup U_2 \cup U_3|^2}{4} 
            = \frac{(n-|U_1|)^2}{4}. 
        \end{align*}
        Additionally, since $L_{1,2} \cup L_{1,3} = \emptyset$ and $u \in U_1$, there are no edges in $L$ crossing $U_1$ and $U_2\cup U_3$. 
        Therefore, 
        \begin{align*}
            |L|
            \le |L[T\cup U_2 \cup U_3]| + |U_1||T|
            < \frac{(n-|U_1|)^2}{4} + |U_1| \cdot \gamma n. 
        \end{align*}
        Since the right-hand side of the inequality above is quadratic in $|U_1|$ with a positive coefficient for $|U_1|^2$, it follows from~\eqref{equ:vtx-extend-Ui-upper} that 
        \begin{align*}
            |L|
            \le \max\left\{\frac{(2-2\beta)^2n^2}{4} + (2\beta-1)\gamma n^2,~\frac{\beta^2 n^2}{4} + (1-\beta)\gamma n^2\right\}.
        \end{align*}
        This, together with~\eqref{equ:vtx-extend-L-lower}, contradicts~\eqref{equ:LEMMA:U1-U2-U3-3-partite}. 
    \end{proof}
    It follows from  Claim~\ref{CLAIM:vtx-extend-N1} that the new sets $U_1\cup \{v\}$ is independent in $\mathcal{H}$. 
    Clearly, the three sets $U_1\cup \{v\},U_2, U_3$ also satisfy assumptions of Lemma~\ref{LEMMA:U1-U2-U3-3-partite}, contradicting the maximality of $U_1$.
    This completes the proof of Lemma~\ref{LEMMA:U1-U2-U3-3-partite}. 
\end{proof}

\subsection{Proof of Lemma~\ref{LEMMA:opt-1}}
Given a graph $G$, let $\overline{G}$ denote its \textbf{complement}. 
Recall that the \textbf{adjacency matrix} $A_{G}$ of $G$ is the $v(G) \times v(G)$ symmetry matrix with 
\begin{align*}
    A_{G}(i,j)
    = 
    \begin{cases}
        1, & \quad\text{if}\quad ij \in G, \\
        0, & \quad\text{otherwise}.
    \end{cases}
\end{align*}
For every integer $m$, let $W_{m}$ and $J_{m}$ denote the $m\times m$ circulant matrix and the all-ones matrix, where  
\begin{align*}
    W_{m}
    \coloneqq 
    \begin{pmatrix}
         1&1&0&0&\cdots&0&0&1\\ 1&1&1&0&\cdots&0&0&0\\ 0&1&1&1&\cdots&0&0&0\\ 0&0&1&1&\cdots&0&0&0 \\ \vdots&\vdots&\vdots&\vdots&\ddots&\vdots&\vdots&\vdots\\  0&0&0&0&\cdots&1&1&0\\  0&0&0&0&\cdots&1&1&1\\ 1&0&0&0&\cdots&0&1&1
    \end{pmatrix}
    \quad\text{and}\quad 
    J_{m}
    \coloneqq 
    \begin{pmatrix}
         1&1&1&1&\cdots&1&1&1\\ 1&1&1&1&\cdots&1&1&1\\ 1&1&1&1&\cdots&1&1&1\\ 1&1&1&1&\cdots&1&1&1 \\ \vdots&\vdots&\vdots&\vdots&\ddots&\vdots&\vdots&\vdots\\  1&1&1&1&\cdots&1&1&1\\ 1&1&1&1&\cdots&1&1&1\\ 1&1&1&1&\cdots&1&1&1
    \end{pmatrix}.
\end{align*}

We need the following lemma, which determines the inverse of the adjacency matrix of the graph $\Gamma_{d}$ (recall its definition from Section~\ref{SEC:prelim}). 
\begin{lemma}\label{LEMMA:inverse-matrix-Gamma-d}
    Let $d\ge 1$ be an integer and let $A_{d}$ denote the adjacency matrix of the graph $\Gamma_{d}$. 
    It holds that 
    \begin{align*}
        A_d^{-1} = W_{3d-1} -\frac{1}{d}J_{3d-1}. 
    \end{align*}
\end{lemma}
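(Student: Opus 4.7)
The plan is to verify the identity $A_d \bigl(W_{3d-1} - \tfrac{1}{d} J_{3d-1}\bigr) = I_{3d-1}$ by direct matrix multiplication. Since $\Gamma_d$ is $d$-regular, every row of $A_d$ sums to $d$, and hence $A_d J_{3d-1} = d J_{3d-1}$. Consequently, the task reduces to proving the single identity
\begin{equation*}
A_d \, W_{3d-1} \;=\; I_{3d-1} + J_{3d-1}.
\end{equation*}

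I would identify $V(\Gamma_d)$ with $\mathbb{Z}_{3d-1}$ so that $A_d$ becomes the adjacency matrix of the circulant graph with connection set $S_d \coloneqq \{\pm(3k-2) \bmod (3d-1) \colon 1 \le k \le \lceil d/2 \rceil\}$. The combinatorial core of the proof is the identification
\begin{equation*}
S_d \;=\; \bigl\{r \in \{1, 2, \ldots, 3d-2\} \colon r \equiv 1 \pmod{3}\bigr\}.
\end{equation*}
I would verify this by splitting on the parity of $d$. When $d = 2e$, the positive generators give $\{1, 4, \ldots, 3e-2\}$ while the negatives, reduced modulo $6e-1$, give $\{3e+1, 3e+4, \ldots, 6e-2\}$; these are disjoint and together exhaust the $2e = d$ residues in $\{1, \ldots, 3d-2\}$ that are congruent to $1$ modulo $3$. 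When $d = 2e-1$, the same calculation applies, but now the largest generator $3e-2 = (3d-1)/2$ equals its own negative modulo $3d-1$, so $|S_d| = d$ rather than $d+1$, and the missing element of the arithmetic progression $\{1, 4, 7, \ldots, 6e-5\}$ is supplied by exactly this coincidence.

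Since both $A_d$ and $W_{3d-1}$ are circulant, so is their product, and only the first row needs to be inspected. A direct computation gives
\begin{equation*}
\bigl(A_d \, W_{3d-1}\bigr)_{0,j} \;=\; \bigl|S_d \cap \{j-1, j, j+1\}\bigr|,
\end{equation*}
with all arithmetic modulo $3d-1$. For $j = 0$, the triple $\{3d-2, 0, 1\}$ meets $S_d$ in $\{1, 3d-2\}$, contributing $2$. For $j \in \{1, \ldots, 3d-3\}$, the three residues $j-1, j, j+1$ are genuine consecutive integers in $\{0, \ldots, 3d-2\}$, so exactly one is $\equiv 1 \pmod{3}$; since $0 \not\equiv 1 \pmod{3}$, that residue is nonzero and therefore lies in $S_d$, contributing $1$. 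The wrap-around case $j = 3d-2$ yields the triple $\{3d-3, 3d-2, 0\}$, in which only $3d-2$ is $\equiv 1 \pmod{3}$ and lies in $S_d$, again contributing $1$. This matches $\bigl(I_{3d-1} + J_{3d-1}\bigr)_{0,j}$ in every case.

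The main obstacle is really just the structural identification of $S_d$ with the residues $\equiv 1 \pmod{3}$ in $\{1, \ldots, 3d-2\}$; in particular, the odd-$d$ case requires one to observe the coincidence between the extremal positive and negative generators at $(3d-1)/2$. Once that is in place, the rest is a short check exploiting that in three consecutive residues of $\mathbb{Z}_{3d-1}$ exactly one has the correct residue modulo $3$, with the two wrap-around indices $j \in \{0, 3d-2\}$ (arising because $3d-1 \equiv 2 \pmod 3$) handled as separate special cases.
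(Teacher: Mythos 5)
Your proof is correct for $d\ge 2$ and takes essentially the same route as the paper's: reduce via $d$-regularity to showing $A_dW_{3d-1}=I_{3d-1}+J_{3d-1}$, then compute the product entrywise as an intersection count $\bigl|N_{\Gamma_d}(i)\cap\{j-1,j,j+1\}\bigr|$. The paper simply lists the elements of $N_{\Gamma_d}(i)$ and then asserts the conclusion, whereas you make the decisive combinatorial observation explicit: under the identification $V(\Gamma_d)\cong\mathbb{Z}_{3d-1}$, the connection set $S_d$ is exactly the set of nonzero residues $\equiv 1\pmod 3$, with the even/odd case split handling the coincidence $3\lceil d/2\rceil-2=(3d-1)/2$ when $d$ is odd. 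This is precisely the step the paper elides, so your writeup supplies the detail a careful reader needs.

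One small caveat that affects both your proof and the paper's, as well as the lemma's stated range: for $d=1$ the conclusion is actually false. There $3d-1=2$, so $j-1$ and $j+1$ coincide modulo $2$; the triple $\{3d-2,0,1\}=\{1,0,1\}$ degenerates to a $2$-element set, and a direct check gives $A_1W_2=J_2\ne I_2+J_2$, hence $A_1\bigl(W_2-J_2\bigr)=0\ne I_2$. Your argument implicitly uses $3d-1\ge 3$ (and $3d-2\ne 1$) so that $\{j-1,j,j+1\}$ consists of three distinct residues. This is harmless in context, since Lemmas~\ref{LEMMA:opt-2} and~\ref{LEMMA:matrix-multiplication} only invoke the result for $d\ge 2$, but it would be cleaner to state the hypothesis as $d\ge 2$.
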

\begin{proof}[Proof of Lemma~\ref{LEMMA:inverse-matrix-Gamma-d}]
Fix $d \ge 1$. For convenience, let $W \coloneqq W_{3d-1}$ and $J \coloneqq J_{3d-1}$. 
Since the graph $\Gamma_d$ is $d$-regular, we have $A_dJ=d J$. It follows that 
\begin{align*}
    A_d\left(W -\frac{1}{d}J\right) 
    = A_d W -\frac{1}{d} A_dJ
    = A_d W -J.
\end{align*}
Let $M \coloneqq A_d W$. 
Observe that the $(i,j)$-entry of $M$ satisfies 
\begin{align}\label{equ:Lemma52-M-ij}
    M(i,j)
    = \sum_{k\in N_{\Gamma_d}(i)} W(k,j)
    = |N_{\Gamma_d}(i) \cap \{j-1, j, j+1\}|, 
\end{align}
where $i$ and $j$ are taken modulo $3d-1$. 

It follows from the definition of $\Gamma_{d}$ that 
\begin{align*}
    N_{\Gamma_d}(i) 
    = \left\{i- 3\left\lceil \frac{d}{2} \right\rceil + 2, \ldots, i-7, i-4, i-1, i+1, i+4, i+7, \ldots, i+ 3\left\lceil \frac{d}{2} \right\rceil-2 \right\}.
\end{align*}
So, by~\eqref{equ:Lemma52-M-ij}, we obtain 
\begin{align*}
    M(i,j)
    = 
    \begin{cases}
        2, & \quad\text{if}\quad i = j, \\
        1, & \quad\text{if}\quad i \neq j.
    \end{cases}
\end{align*}
It follows that $M - J = I$, meaning that $A_d\left(W -\frac{1}{d}J\right) = I$, which completes the proof of Lemma~\ref{LEMMA:inverse-matrix-Gamma-d}. 
\end{proof}

\begin{lemma}\label{LEMMA:matrix-d-regular}
    Let $m \ge d  \ge 1$ be integers. 
    Let $F$ be a $d$-regular graph on $m$ vertices.  Suppose that $\mathbf{z} = (z_1,z_2,\ldots,z_m) \in \mathbb{R}^{m}$ is a vector satisfying $z_1+z_2+\cdots+z_m=z$ and $\min_{i\in [m]}z_i \geq z_0$ for some constants $z \ge z_0 \ge 0$.  
    Then
    \begin{align*}
        \frac{1}{2}\mathbf{z}^T A_{F} \mathbf{z}  
        \ge  dzz_0 - \frac{1}{2}dmz_0^2.
    \end{align*}
\end{lemma}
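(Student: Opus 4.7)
The plan is to reduce to the nonnegative case by the shift $y_i \coloneqq z_i - z_0 \ge 0$ and then exploit two elementary identities coming from $d$-regularity, namely $|F| = dm/2$ and $\sum_{ij \in F}(y_i + y_j) = d \sum_{i\in [m]} y_i$.

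First I would rewrite the quadratic form in the combinatorial way
\[
\frac{1}{2} \mathbf{z}^T A_F \mathbf{z} = \sum_{ij \in F} z_i z_j,
\]
which is just the statement that $A_F$ is the $0/1$ adjacency matrix of a simple graph. Substituting $z_i = y_i + z_0$ and expanding,
\[
\sum_{ij \in F} z_i z_j = \sum_{ij \in F} y_i y_j + z_0 \sum_{ij \in F}(y_i + y_j) + z_0^2 |F|.
\]

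Next I would plug in the two regularity identities. Since $F$ is $d$-regular, $\sum_{ij \in F}(y_i + y_j) = d \sum_{i\in [m]} y_i = d(z - m z_0)$, and $|F| = dm/2$. Therefore
\[
\sum_{ij \in F} z_i z_j = \sum_{ij \in F} y_i y_j + d z_0 (z - m z_0) + \frac{dm z_0^2}{2} = \sum_{ij \in F} y_i y_j + d z z_0 - \frac{dm z_0^2}{2}.
\]
Finally, because $z_i \ge z_0$ for every $i$, we have $y_i \ge 0$, so $\sum_{ij \in F} y_i y_j \ge 0$, and dropping this nonnegative term gives the claimed inequality.

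There is essentially no obstacle; the only thing to be careful about is verifying that the regularity hypothesis is applied in both places (to count $|F|$ and to evaluate the linear term), and that the hypothesis $z \ge z_0 \ge 0$ is only needed to ensure $y_i \ge 0$ (so that the dropped term is nonnegative), not for the algebraic identity itself.
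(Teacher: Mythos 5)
Your proof is correct, and it is essentially the same argument as the paper's: both shift by $z_0$, exploit $d$-regularity to evaluate the resulting linear and constant contributions, and drop the same nonnegative quantity $\sum_{ij\in F}(z_i - z_0)(z_j - z_0)$. The paper phrases this by first defining neighbor-sums $y_i \coloneqq \sum_{j\in N_F(i)} z_j$ and splitting $\sum_i z_i y_i$, whereas you substitute $z_i = y_i + z_0$ directly into the quadratic form and expand, which is arguably a cleaner presentation of the identical computation.
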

\begin{proof}[Proof of Lemma~\ref{LEMMA:matrix-d-regular}]
    Let $z_0,z_1,\ldots,z_m, z\ge 0$ be real numbers as assumed in the lemma. 
    For each $i \in [m]$, let $y_i \coloneqq \sum_{j\in N_{F}(i)} z_j$. 
    Since $F$ is $d$-regular and $\min_{i\in [m]}z_i \geq z_0$, we have  
    \begin{align*}
        y_i \ge dz_0
        \quad\text{and}\quad 
        \sum_{i\in [m]} y_i 
        = \sum_{i\in [m]} \sum_{j\in N_{F}(i)} z_j 
        = \sum_{j\in [m]} dz_j 
        = dz.
    \end{align*}
    It follows that 
    \begin{align*}
        \mathbf{z}^T A_{F} \mathbf{z} 
        & = \sum_{\{i,j\} \in \binom{[m]}{2}} \left(A_{F}(i,j) \cdot z_i \cdot z_j + A_{F}(j,i) \cdot z_j \cdot z_i \right) \\
        & = 2 \sum_{\{i,j\} \in F} z_i z_j
        = \sum_{i\in [m]} \left( z_i \cdot \sum_{j\in N_{F}(i)} z_j \right)    \\
        & = \sum_{i\in [m]} z_i y_i
        = \sum_{i\in [m]} (z_i - z_0) y_i + \sum_{i\in [m]} z_0 y_i  \\
        & \ge \sum_{i\in [m]} (z_i - z_0) d z_0 + \sum_{i\in [m]} z_0 y_i 
         = z \cdot d z_0 - m \cdot z_0 dz_0 + z_0 \cdot dz 
        = 2dzz_0 - dmz_0^2, 
    \end{align*}
    as desired. 
\end{proof}

We are now ready to present the proof of Lemma~\ref{LEMMA:opt-1}. 
\begin{proof}[Proof of Lemma~\ref{LEMMA:opt-1}]
    In this proof, all indices are taken modulo $5$. 
    Suppose to the contrary that there exists $(x, y_1, \ldots, y_5) \in \Delta_{5}$ such that 
    \begin{align}
            & \sum_{i\in [5]} y_i y_{i+1} > \frac{4}{45}, \quad\text{and} \label{equ:opt1-a}\\ 
            & x (y_{i-1} + y_{i+1}) > \frac{4}{45} \quad\mathrm{for}\quad i \in [5]. \label{equ:opt1-b} 
    \end{align}
    Since $x+ \sum_{i\in [5]}y_i = 1$, it follows from~\eqref{equ:opt1-b} that 
    \begin{align}\label{equ:opt1-c}
        x(1-x)
        = x \sum_{i\in [5]} y_i 
        = \frac{1}{2} \sum_{i\in [5]} x (y_{i-1} + y_{i+1})
        > \frac{1}{2} \cdot 5 \cdot \frac{4}{45} 
        = \frac{2}{9}. 
    \end{align}
    Solving this inequality, we obtain  
    \begin{align}\label{equ:opt1-d}
        1/3 < x < 2/3.
    \end{align}
    Let $z_i \coloneqq y_{i-1} + y_{i+1}$ for $i \in [5]$.
    Let 
    \begin{align*}
        \mathbf{y}
        =\begin{pmatrix}
            y_1\\ y_2\\ y_3\\ y_4 \\ y_5
        \end{pmatrix},
        \quad 
        \mathbf{z}
        =\begin{pmatrix}
            z_1\\ z_2\\ z_3\\ z_4 \\ z_5
        \end{pmatrix}, 
        \quad\text{and}\quad 
        B 
        = \begin{pmatrix}
             1&1/2&1&1&1/2\\ 1/2&1&1/2&1&1\\ 1&1/2&1&1/2&1\\ 1&1&1/2&1&1/2 \\1/2&1&1&1/2&1
        \end{pmatrix}.
    \end{align*}
    Let $A_2$ denote the adjacency matrix of $\Gamma_2$, noting from the definition that $\mathbf{z} =A_2\mathbf{y}$ and thus, $\mathbf{y} = A_{2}^{-1} \mathbf{z}$. 
    Let $Q$ denote the graph on $[5]$ with edge set $\{13,24,35,41,52\}$. 
    Some straightforward calculations show that $\left(A_{2}^{-1}\right)^{T} B A_{2}^{-1} = \frac{1}{2}A_{Q}$. 
    Combining these with~\eqref{equ:opt1-a}, we obtain 
    \begin{align}\label{equ:opt1-e}
        \frac{4}{45}
        < \sum_{i\in [5]}y_iy_{i+1}
        = \left(\sum_{i\in [5]}y_i\right)^2-\mathbf{y}^TB\mathbf{y}
        & = \left(1-x\right)^2 - \mathbf{z}^T \left(A_{2}^{-1}\right)^{T} B A_{2}^{-1} \mathbf{z}  \notag \\
        & = \left(1-x\right)^2 - \frac{1}{2}\mathbf{z}^T A_{Q} \mathbf{z} \notag \\
        & = \left(1-x\right)^2 - \sum_{i\in [5]} z_i z_{i+1}. 
    \end{align}
    Recall from~\eqref{equ:opt1-b} that for each $i \in [5]$, $xz_i > \frac{4}{45}$ and thus, $z_i > \frac{4}{45x}$. 
    Applying Lemma~\ref{LEMMA:matrix-d-regular} to $Q$ with $m\coloneqq 5$, $d \coloneqq 2$, $z\coloneqq \sum_{i\in [5]}z_i = 2 \sum_{i\in [5]} y_i = 2(1-x)$, and $z_0 \coloneqq \frac{4}{45x}$, we obtain 
    \begin{align*}
        \sum_{i\in [5]}z_{i}z_{i+2}
        & = \frac{1}{2}\mathbf{z}^T A_{Q} \mathbf{z} \\
        & \ge  2 \cdot 2(1-x) \cdot \frac{4}{45x}  - \frac{1}{2} \cdot 2\cdot 5 \cdot \left(\frac{4}{45x}\right)^2
         =\frac{16(-9x^2 + 9x -1)}{405x^2}.
    \end{align*}
    Combining this with~\eqref{equ:opt1-e}, we obtain 
    \begin{align*}
        (1-x)^2 - \frac{16(-9x^2 + 9x -1)}{405x^2} 
        > \frac{4}{45}, 
    \end{align*}
    which is equivalent to 
    \begin{align*}
        (1- 3 x) (135 x^3- 225 x^2+ 96 x-16) < 0. 
    \end{align*}
    Straightforward calculations show that $135 x^3- 225 x^2+ 96 x-16 < 0$ for $x\in [0,1]$. 
    So the inequality above implies that $1-3x>0$, which contradicts~\eqref{equ:opt1-d}.  This completes the proof of Lemma~\ref{LEMMA:opt-1}. 
\end{proof}

\subsection{Proof of Lemma~\ref{LEMMA:opt-2}}
We will use the following lemma in the proof of Lemma~\ref{LEMMA:opt-2}.
\begin{lemma}\label{LEMMA:matrix-multiplication}
    Let $d \ge 2$ be an integer and $A_{d}$ denote the adjacency matrix of $\Gamma_{d}$. 
    Then 
    \begin{align*}
        \left(A_{d}^{-1}\right)^{T} \left(\frac{1}{2} A_{d} - \binom{d}{2} J_{3d-1}\right) A_{d}^{-1}
        = \frac{W_{3d-1} - J_{3d-1}}{2}. 
    \end{align*}
\end{lemma}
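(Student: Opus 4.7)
The plan is to reduce the claimed identity to a short algebraic computation, using Lemma~\ref{LEMMA:inverse-matrix-Gamma-d} together with two basic facts about $W_{3d-1}$ and $J_{3d-1}$.

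First I would note that both $A_d$ and $J_{3d-1}$ are symmetric, so $(A_d^{-1})^{T} = A_d^{-1}$, and the LHS splits into two terms: a ``kinetic'' term $\frac{1}{2} A_d^{-1} A_d A_d^{-1} = \frac{1}{2} A_d^{-1}$, and a ``correction'' term $-\binom{d}{2}\, A_d^{-1} J_{3d-1} A_d^{-1}$. For the first term, Lemma~\ref{LEMMA:inverse-matrix-Gamma-d} gives immediately
\begin{equation*}
\frac{1}{2}A_d^{-1} = \frac{1}{2}\left(W_{3d-1} - \frac{1}{d} J_{3d-1}\right).
\end{equation*}

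Next I would handle the correction term. The key observation is that $\Gamma_d$ is $d$-regular and $W_{3d-1}$ is the $3$-regular circulant with $1$'s on the main diagonal and the two adjacent off-diagonals, so every row of $W_{3d-1}$ sums to $3$. Hence $W_{3d-1} J_{3d-1} = 3 J_{3d-1}$ and $J_{3d-1}^2 = (3d-1) J_{3d-1}$. Using Lemma~\ref{LEMMA:inverse-matrix-Gamma-d},
\begin{equation*}
A_d^{-1} J_{3d-1} = \left(W_{3d-1} - \tfrac{1}{d} J_{3d-1}\right) J_{3d-1} = \left(3 - \tfrac{3d-1}{d}\right) J_{3d-1} = \frac{1}{d} J_{3d-1},
\end{equation*}
and symmetrically $J_{3d-1} A_d^{-1} = \frac{1}{d} J_{3d-1}$. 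Iterating,
\begin{equation*}
A_d^{-1} J_{3d-1} A_d^{-1} = \frac{1}{d^2} J_{3d-1}, \qquad \binom{d}{2} A_d^{-1} J_{3d-1} A_d^{-1} = \frac{d-1}{2d} J_{3d-1}.
\end{equation*}

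Finally I would combine the two contributions:
\begin{equation*}
\frac{1}{2}\left(W_{3d-1} - \tfrac{1}{d} J_{3d-1}\right) - \frac{d-1}{2d} J_{3d-1} = \frac{W_{3d-1}}{2} - \frac{1 + (d-1)}{2d} J_{3d-1} = \frac{W_{3d-1} - J_{3d-1}}{2},
\end{equation*}
which is exactly the right-hand side. There is no real obstacle here; the only point requiring a little care is the identity $W_{3d-1} J_{3d-1} = 3 J_{3d-1}$, which rests on observing that $W_{3d-1}$ has constant row-sum $3$ for every $d \ge 2$ (so $3d-1 \ge 5$ ensures the three nonzero entries in each row are distinct). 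Everything else follows from Lemma~\ref{LEMMA:inverse-matrix-Gamma-d} and elementary matrix algebra.
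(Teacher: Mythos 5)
Your proof is correct and follows essentially the same route as the paper: both substitute the explicit inverse $A_d^{-1} = W_{3d-1} - \tfrac{1}{d}J_{3d-1}$ from Lemma~\ref{LEMMA:inverse-matrix-Gamma-d}, use $W_{3d-1}J_{3d-1} = 3J_{3d-1}$ and $J_{3d-1}^2 = (3d-1)J_{3d-1}$, and simplify. Your organization, isolating the identity $A_d^{-1}J_{3d-1} = \tfrac{1}{d}J_{3d-1}$ (which one could also get directly from $A_d J_{3d-1} = dJ_{3d-1}$ by $d$-regularity) before squaring, is a tidier bookkeeping of the same computation the paper carries out by fully expanding $\bigl(W_{3d-1} - \tfrac{1}{d}J_{3d-1}\bigr)J_{3d-1}\bigl(W_{3d-1} - \tfrac{1}{d}J_{3d-1}\bigr)$.
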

\begin{proof}[Proof of Lemma~\ref{LEMMA:matrix-multiplication}]
    Let $m \coloneqq 3d-1$ and $M \coloneqq \left(A_{d}^{-1}\right)^{T} \left(\frac{1}{2} A_{d} - \binom{d}{2} J_{m}\right) A_{d}^{-1}$. 
    Using Lemma~\ref{LEMMA:inverse-matrix-Gamma-d} and the fact that $A_{d}$ is symmetric, we obtain 
    \begin{align*}
        M 
        & = \frac{1}{2} A_{d}^{-1} - \binom{d}{2} A_{d}^{-1} J_{m} A_{d}^{-1} \\
        &  = \frac{1}{2} \left(W_{m} -\frac{1}{d}J_{m}\right) - \binom{d}{2} \left(W_{m} -\frac{1}{d}J_{m}\right) J_{m} \left(W_{m} -\frac{1}{d}J_{m}\right). 
    \end{align*}
    Since $W_{m} J_{m} = J_{m} W_{m} = 3J_{m}$ and $J_{m} J_{m} = m J_{m}$, the equation above continues as 
    \begin{align*}
        M
        & = \frac{1}{2} \left(W_{m} -\frac{1}{d}J_{m}\right) - \binom{d}{2} \left(W_{m}J_{m}W_{m} -\frac{1}{d} W_{m}J_{m}J_{m} - \frac{1}{d} J_{m}J_{m}W_{m} + \frac{1}{d^2}J_{m}^{3}\right) \\
        & = \frac{1}{2} \left(W_{m} -\frac{1}{d}J_{m}\right) - \binom{d}{2} \left(9 J_{m} - \frac{1}{d} \cdot 3mJ_{m} - \frac{1}{d}\cdot 3mJ_{m} + \frac{1}{d^2} m^2 J_{m}\right) \\
        & = \frac{1}{2}\left(W_{m} - J_{m}\right), 
    \end{align*}
    as desired. 
\end{proof}

Next, we present the proof of Lemma~\ref{LEMMA:opt-2}. 
\begin{proof}[Proof of Lemma~\ref{LEMMA:opt-2}]
    Fix an integer $d \in [2,12]$. Suppose to the contrary that there exists $(y_1, \ldots, y_{3d-1}) \in \mathbb{R}^{3d-1}$ with $\min_{i\in [3d-1]}y_i > 0$ that satisfies 
    \begin{align}
        & \sum_{ij \in \Gamma_{d}} y_iy_j > \frac{4}{45}, \label{equ:opt2-a} \\
        & \sum_{j \in N_{\Gamma_{d}}(i)} y_j > \frac{6}{17} \sum_{i \in [3d-1]} y_i \quad\mathrm{for}\quad i \in [3d-1], \quad\text{and} \label{equ:opt2-b}\\
        & \sum_{i \in [3d-1]} y_i < 3- \frac{16}{3\sqrt{5}}. \label{equ:opt2-c}
    \end{align}
    Let $m \coloneqq 3d-1$ and $y \coloneqq \sum_{i\in [m]} y_i$, noting from~\eqref{equ:opt2-c} that $y < 3- \frac{16}{3\sqrt{5}}$.  
    For each $i \in [m]$, let $z_i \coloneqq \sum_{j \in N_{\Gamma_{d}}(i)} y_i$. 
    Since $\Gamma_{d}$ is $d$-regular, we have 
    \begin{align*}
        z
        \coloneqq \sum_{i\in [m]} z_i 
        = d \sum_{i\in [m]} y_i 
        = d y. 
    \end{align*}
    Let $\mathbf{y} \coloneqq (y_1, \ldots, y_m)^{T}$ and $\mathbf{z} \coloneqq (z_1, \ldots, z_{m})^{T}$. 
    %
    Let $A_{d}$ denote the adjacency matrix of $\Gamma_{d}$, noting from the definition that $\mathbf{z} = A_{d} \mathbf{y}$ and thus, $ \mathbf{y} = A_{d}^{-1} \mathbf{z}$. 
    Combining this with~\eqref{equ:opt2-a} and Lemma~\ref{LEMMA:matrix-multiplication}, we obtain 
    \begin{align}\label{equ:opt2-d}
        \frac{4}{45}
        < \sum_{ij \in \Gamma_{d}} y_iy_j 
        = \frac{1}{2} \mathbf{y}^{T} A_{d} \mathbf{y}
        & =  \mathbf{y}^{T} \left( \frac{1}{2} A_{d} - \binom{d}{2}J_{m} + \binom{d}{2}J_{m}\right) \mathbf{y} \notag \\
        & = \binom{d}{2} \mathbf{y}^{T} J_{m} \mathbf{y} + \mathbf{y}^{T} \left( \frac{1}{2} A_{d} - \binom{d}{2}J_{m} \right) \mathbf{y} \notag \\
        & = \binom{d}{2} \left(\sum_{i\in [m]}y_i\right)^2 + \mathbf{z}^{T}\left(A_{d}^{-1}\right)^{T} \left( \frac{1}{2} A_{d} - \binom{d}{2}J_{m} \right) A_{d}^{-1} \mathbf{z} \notag \\
        & = \binom{d}{2} y^2  - \frac{1}{2} \mathbf{z}^{T} \left(J_{m} - W_{m}\right) \mathbf{z}. 
    \end{align}
    Next, we consider the lower bound for $\frac{1}{2} \mathbf{z}^{T} \left(J_{m} - W_{m}\right) \mathbf{z}$. 
    Let $C_{m}$ denote the cycle on $[m]$ with edge set $\{\{1,2\}, \{2,3\}, \ldots, \{m-1,m\}, \{m,1\}\}$. Observe that $J_{m} - W_{m}$ is identical to the adjacency matrix of the complement of $C_{m}$, i.e. $J_{m} - W_{m} = A_{\overline{C}_{m}}$. 
    Applying Lemma~\ref{LEMMA:matrix-d-regular} to $\overline{C}_{m}$ with $z_0 \coloneqq \frac{6y}{17}$ (due to~\eqref{equ:opt2-b}) and $z = dy$, we obtain 
    \begin{align*}
        \frac{1}{2} \mathbf{z}^{T} \left(J_{m} - W_{m}\right) \mathbf{z} 
        & \ge (m-3) \cdot dy \cdot \frac{6y}{17} - \frac{1}{2} \cdot (m-3) \cdot m \cdot \left(\frac{6y}{17}\right)^2 \\
        & = \frac{6(m-3)(17d-3m)}{289} y^2
        = \frac{6(3d-4)(8d+3)}{289} y^2.
    \end{align*}
    Combining this with~\eqref{equ:opt2-d}, we obtain 
    \begin{align*}
        \frac{4}{45}
        & < \binom{d}{2}y^2 - \frac{6(3d-4)(8d+3)}{289} y^2 \\
        & = \frac{d^2 - 13d + 144}{578} y^2 
         < \frac{d^2 - 13d + 144}{578} \left(3-\frac{16}{3\sqrt{5}}\right)^2, 
    \end{align*}
    where the last inequality follows from~\eqref{equ:opt2-c}. 
    However, straightforward calculations show that this inequality cannot hold for $d \in [2,12]$. 
    This completes the proof of Lemma~\ref{LEMMA:opt-2}. 
\end{proof}

\section{Proof of Proposition~\ref{PROP:AES-shadow-K4}}\label{SEC:proof-AES-shadow-K4}
%
In this section, we prove Proposition~\ref{PROP:AES-shadow-K4}. 
Observe that if a $3$-graph $\mathcal{H}$ satisfies $K_{4} \not\subseteq \partial\mathcal{H}$, then it is $\{K_4^{3-}, F_5\}$-free. Thus, all results concerning $\{K_4^{3-}, F_5\}$-free $3$-graphs can be applied in this proof. 
\begin{proof}[Proof of Proposition~\ref{PROP:AES-shadow-K4}]
    Fix $n \ge 1$. Let $\mathcal{H}$ be an $n$-vertex $3$-graph satisfying $\delta(\mathcal{H}) > 4n^2/45$ and $K_{4} \not\subseteq \partial\mathcal{H}$. 
    Our goal is to show that $\mathcal{H}$, and equivalently $\partial\mathcal{H}$, is $3$-partite. 
    Note that we may assume that $\mathcal{H}$ is maximal in the sense that  
    \begin{enumerate}[label=(\roman*)]
        \item every triangle in $\partial\mathcal{H}$ is an edge of $\mathcal{H}$, and 
        \item adding any new edge to $\mathcal{H}$ would violate the $K_{4}$-freeness of $\partial\mathcal{H}$. 
    \end{enumerate}
    %
    Let $V \coloneqq V(\mathcal{H})$ and let $G$ be a maximal $K_{4}$-free graph on $V$ such that $\partial\mathcal{H} \subseteq G$. 
    Suppose to the contrary that $\mathcal{H}$ is not $3$-partite. Then $G$ is not $3$-partite as well. 
    \begin{claim}\label{CLAIM:G-min-deg}
        We have $\delta(G) \ge \delta(\partial\mathcal{H}) > \frac{4}{3\sqrt{5}} n$. 
    \end{claim}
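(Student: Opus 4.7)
The plan is to reduce the claim to Mantel's theorem applied to the link of each vertex. The inequality $\delta(G)\ge \delta(\partial\mathcal{H})$ is immediate from the inclusion $\partial\mathcal{H}\subseteq G$, so the substance is to prove $\delta(\partial\mathcal{H}) > \frac{4}{3\sqrt{5}}n$.

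Fix an arbitrary $v\in V$ and look at the link $L_{\mathcal{H}}(v)$. Since by assumption $K_4\not\subseteq \partial\mathcal{H}$, the hypergraph $\mathcal{H}$ is in particular $\{K_4^{3-},F_5\}$-free, so Fact~\ref{FACT:2-links-cancellative}\ref{FACT:2-links-cancellative-1} tells us that $L_{\mathcal{H}}(v)$ is a triangle-free graph. Now note that the degree of $v$ in the shadow is exactly the number of vertices spanned by the link$\colon$
\begin{equation*}
  d_{\partial\mathcal{H}}(v) \;=\; \bigl|\{u\in V\setminus\{v\} : \{u,v\}\in \partial\mathcal{H}\}\bigr| \;=\; \bigl|V\bigl(L_{\mathcal{H}}(v)\bigr)\bigr|.
\end{equation*}

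Applying Mantel's theorem to the triangle-free graph $L_{\mathcal{H}}(v)$ on $d_{\partial\mathcal{H}}(v)$ vertices gives
\begin{equation*}
  \frac{4n^2}{45} \;<\; d_{\mathcal{H}}(v) \;=\; |L_{\mathcal{H}}(v)| \;\le\; \frac{d_{\partial\mathcal{H}}(v)^2}{4}.
\end{equation*}
Rearranging yields $d_{\partial\mathcal{H}}(v) > 2\sqrt{4n^2/45} = \frac{4n}{3\sqrt{5}}$. Since $v$ was arbitrary, $\delta(\partial\mathcal{H}) > \frac{4n}{3\sqrt{5}}$, and combined with $\delta(G)\ge \delta(\partial\mathcal{H})$ (which holds because $\partial\mathcal{H}\subseteq G$ on the same vertex set) the claim follows.

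There is no real obstacle here$\colon$ the claim is a one-line consequence of Mantel applied to links, and the only ingredients needed are the triangle-freeness of links (Fact~\ref{FACT:2-links-cancellative}\ref{FACT:2-links-cancellative-1}) and the observation that the shadow-degree of $v$ equals the number of vertices in its link. The factor $\tfrac{4}{3\sqrt{5}}$ is precisely what one gets from $2\sqrt{4/45}$, so the statement is calibrated exactly to a direct Mantel bound.
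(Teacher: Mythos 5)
Your proof is correct and follows essentially the same approach as the paper: both apply Mantel's theorem to the triangle-free link $L_{\mathcal{H}}(v)$ (via Fact~\ref{FACT:2-links-cancellative}\ref{FACT:2-links-cancellative-1}), using the identification of $d_{\partial\mathcal{H}}(v)$ with the number of non-isolated vertices of the link, and then rearrange to get $d_{\partial\mathcal{H}}(v) \ge 2\sqrt{\delta(\mathcal{H})} > \frac{4}{3\sqrt{5}}n$. The only cosmetic difference is that you take an arbitrary $v$ while the paper fixes a minimum-shadow-degree vertex, but the argument is identical.
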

    \begin{proof}[Proof of Claim~\ref{CLAIM:G-min-deg}]
        Take a vertex $v \in V(\mathcal{H})$ such that $d_{\partial\mathcal{H}}(v) = \delta(\partial\mathcal{H})$. 
        After removing isolated vertices, we can view the vertex set of the graph $L_{\mathcal{H}}(v)$ as $N_{\partial\mathcal{H}}(v)$. 
        Since $L_{\mathcal{H}}(v)$ is triangle-free (by Fact~\ref{FACT:2-links-cancellative}~\ref{FACT:2-links-cancellative-1}), it follows from Mantel's theorem that 
        \begin{align*}
            \delta(\mathcal{H})
            \le |L_{\mathcal{H}}(v)|
            \le \frac{|N_{\partial\mathcal{H}}(v)|^2}{4}. 
        \end{align*}
        It follows that $\delta(\partial\mathcal{H}) = |N_{\partial\mathcal{H}}(v)| \ge 2\sqrt{\delta(\mathcal{H})} > \frac{4}{3\sqrt{5}}n$. 
    \end{proof}
     Since $\delta(G) > \frac{4}{3\sqrt{5}}n>\frac{4}{7}n$, it follows from Theorem~\ref{THM:Lyle-K4} that either $G$ is the join of an independent set and a maximal triangle-free graph or 
     \begin{align*}
         \alpha(G)
         > 4\delta(G)- 2n 
         > \left(\frac{16}{3\sqrt{5}} - 2\right)n. 
     \end{align*}

     \medskip

     \textbf{Case 1. } The graph $G$ is the join graph of an independent set $I$ and a maximal triangle-free graph, and $\alpha(G) \le \left(\frac{16}{3\sqrt{5}} - 2\right)n$. 

    Let $U \coloneqq V\setminus I$. Since $G$ is not $3$-partite, $G[U]$ cannot not be bipartite. Let $x \in [0,1]$ be the real number such that $|I| = xn$, noting that $x \le \frac{16}{3\sqrt{5}} - 2$. Since $G$ is maximal $K_{4}$-free and $G[U]$ is triangle-free, we have $x >0$. 
    Since $I$ is independent and $G[U]$ is triangle-free, every edge $e\in \mathcal{H}$ satisfies $|e \cap I| = 1$ and $|e\cap U| = 2$.
    Hence, for every $v\in U$, we have 
    \begin{align}\label{equ:K4-free-link-v-upper-bound}
        L_{\mathcal{H}}(v) 
        \le |I| \cdot |N_{G}(v,U)|
        = |I| \cdot d_{G[U]}(v)
        = xn \cdot d_{G[U]}(v). 
    \end{align}
    \begin{claim}\label{CLAIM:K4-free-GU-C5-colorable}
        The induced subgraph $G[U]$ is a blowup of $C_5$. 
    \end{claim}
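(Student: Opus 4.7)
The plan is to apply Theorem~\ref{THM:Jin93} (Jin) with $d=2$ to conclude that $G[U]$ is $\Gamma_2=C_5$-colorable, and then to use the maximality of $G[U]$ as a triangle-free graph to upgrade this to a genuine $C_5$-blowup structure with all parts nonempty.

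First I would note that $G[U]$ is maximal triangle-free. Since $G$ is the join of $I$ (with $|I|\ge 1$) and $G[U]$, for any non-edge $\{u,w\}\subseteq U$, adding $\{u,w\}$ to $G$ creates a $K_4$ if and only if $\{u,w\}$ creates a triangle in $G[U]+\{u,w\}$ (any such new triangle extends to a $K_4$ in $G$ by adjoining a vertex of $I$, while a $K_4$ in $G+\{u,w\}$ can use at most one vertex of $I$ by independence). The maximal $K_4$-freeness of $G$ therefore forces $G[U]$ to be maximal triangle-free.

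Next, I would establish the degree lower bound. For each $v\in U$, the bound $|L_{\mathcal{H}}(v)|\le xn\cdot d_{G[U]}(v)$ from~\eqref{equ:K4-free-link-v-upper-bound} together with $\delta(\mathcal{H})>4n^2/45$ gives $d_{G[U]}(v)>\frac{4n}{45x}$, so
\[
    \frac{\delta(G[U])}{|U|}>\frac{4}{45x(1-x)}.
\]
The key numerical step is to show $x(1-x)<\tfrac{32}{135}$ throughout the relevant range $x\in\bigl(0,\tfrac{16}{3\sqrt{5}}-2\bigr]$, which is equivalent to $\delta(G[U])/|U|>3/8$. Since $\tfrac{16}{3\sqrt{5}}-2<\tfrac12$ and $x(1-x)$ is increasing on $(0,1/2]$, it suffices to check the inequality at the endpoint $x=\tfrac{16}{3\sqrt{5}}-2$; a short rearrangement reduces it to $\sqrt{5}<\tfrac{161}{72}$, which holds because $161^2=25921>25920=5\cdot 72^2$.

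With $\delta(G[U])/|U|>3/8=\tfrac{d+1}{3d+2}$ at $d=2$, Theorem~\ref{THM:Jin93} yields a homomorphism $\psi\colon V(G[U])\to V(\Gamma_2)=V(C_5)$. Writing $V_i\coloneqq\psi^{-1}(i)$ for $i\in[5]$, any non-edge of $G[U]$ between $V_i$ and $V_{i+1}$ (indices mod $5$) could be added without creating a triangle, since the complete $C_5$-blowup is triangle-free; the maximality of $G[U]$ then forces every such pair to be an edge. Hence $G[U]$ coincides with the complete blowup of $C_5$ on parts $V_1,\dots,V_5$. Were some $V_i$ empty, $G[U]$ would be a blowup of the path $C_5-v_i$, hence bipartite, making $G=I\uproduct G[U]$ itself $3$-partite and contradicting our overall assumption. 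Consequently all $V_i$ are nonempty and $G[U]$ is a blowup of $C_5$.

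The main obstacle is the tight numerical inequality $x(1-x)<32/135$ at the extremal value $x=\tfrac{16}{3\sqrt{5}}-2$, which holds by the razor-thin margin $161^2-5\cdot 72^2=1$; this essentially explains why the constant $\tfrac{16}{3\sqrt{5}}-2$ coming from Theorem~\ref{THM:Lyle-K4} is exactly what is needed to make the approach via Jin's theorem at $d=2$ succeed.
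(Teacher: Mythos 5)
Your proof is correct and follows essentially the same route as the paper's: both reduce to verifying $\delta(G[U]) > \tfrac{3}{8}|U|$ via the bound $|L_{\mathcal{H}}(v)| \le xn\,d_{G[U]}(v)$, invoke Theorem~\ref{THM:Jin93} at $d=2$, and then use maximality and non-bipartiteness of $G[U]$ to pass from $C_5$-colorability to a genuine $C_5$-blowup, with the same razor-thin numerical check $5\cdot 72^2 = 25920 < 25921 = 161^2$ at $x = \tfrac{16}{3\sqrt{5}}-2$. The only cosmetic difference is that you re-derive the maximal triangle-freeness of $G[U]$ from scratch, whereas the paper already has it as part of the Case~1 hypothesis inherited from Theorem~\ref{THM:Lyle-K4}.
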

    \begin{proof}[Proof of Claim~\ref{CLAIM:K4-free-GU-C5-colorable}]
        Since $G[U]$ is non-bipartite and maximal triangle-free, it suffices to show that $G[U]$ is $C_5$-colorable. By Theorem~\ref{THM:Jin93}, this is reduced to show that $\delta(G[U]) > \frac{3}{8}|U|$. 
        Suppose to the contrary that there exists a vertex $v\in U$ with $d_{G[U]}(v) \le \frac{3}{8}|U|$. Then it follows from~\eqref{equ:K4-free-link-v-upper-bound} and the assumption $x \le \frac{16}{3\sqrt{5}} - 2 < \frac{1}{2}$ that 
        \begin{align*}
            \frac{4}{45} n^2 
            < L_{\mathcal{H}}(u)
            \le xn \cdot d_{G[U]}(v) 
            \le \frac{3}{8}x(1-x)n^2
            \le \left(2\sqrt{5} - \frac{263}{60} \right)n^2
            < \frac{4}{45} n^2,  
        \end{align*}
        a contradiction. 
        %
    \end{proof}
    Fix a homomorphism $\psi$ from $G[U]$ to $C_{5}$. Let $D_i \coloneqq \psi^{-1}(i)$ and $y_i \coloneqq |D_i|/n$ for $i \in [5]$. It follows from Claim~\ref{CLAIM:K4-free-GU-C5-colorable} that for each $i \in [5]$, $y_i > 0$, and the induced subgraph of $G$ on $D_{i} \cup D_{i+1}$ is complete bipartite with parts $D_i$ and $D_{i+1}$. Here, the indices are taken modulo $5$. 

    Since $G$ is the join of $I$ and $G[U]$ and $G[U]$ is a blowup of $C_5$, it follows from the maximality of $\mathcal{H}$ that $\partial\mathcal{H} = G$ and $\mathcal{H}$ is the blowup $W_{5}^{3}[xn,y_1n, \ldots, y_5 n]$ of the $3$-uniform $5$-wheel, as defined in Section~\ref{SEC:Intorduction}. 
    Fix a vertex $v\in I$ and fix $u_i \in D_i$ for every $i\in [5]$. 
    It follows from the assumption on $\delta(\mathcal{H})$ that 
    \begin{align*}
        \frac{4}{45}
        & < \frac{|L_{\mathcal{H}}(v)|}{n^2}
        = \frac{1}{n^2} \sum_{i\in [5]}|D_i||D_{i+1}|
        = \sum_{i\in [5]} y_i y_{i+1}, \quad\text{and}\\
        \frac{4}{45}
        & < \frac{|L_{\mathcal{H}}(u_i)|}{n^2}
        = \frac{1}{n^2} |I|\left(|D_{i-1}| + |D_{i+1}|\right)
        = x(y_{i-1}+y_{i+1})
        \quad\text{for}\quad i\in [5].
    \end{align*}
    %
    However, according to Lemma~\ref{LEMMA:opt-1}, these inequalities are impossible. 
    
\medskip

    \textbf{Case 2.} $\alpha(G)>\left(\frac{16}{3\sqrt{5}}-2\right)n$.

    Let $I$ be an independent set of maximum size in $G$. 
    Let $x\coloneqq |I|/n = \alpha(G)/n$, noting that $x  >\frac{16}{3\sqrt{5}}-2$. Fix a vertex $v\in I$ and let $U \coloneqq N_{G}(v) \subseteq V\setminus I$.  Let $y \coloneqq |U|/n$. 
    Let $T \coloneqq V\setminus (I \cup U)$, noting that $|T| = (1-x-y)n$. 
    Since $G$ is $K_{4}$-free, the induced subgraph $G[U]$ is triangle-free. 
    Additionally, it follows from Claim~\ref{CLAIM:G-min-deg} that 
    \begin{align}\label{equ:K4-free-case2-U-size}
        y = \frac{|U|}{n} = \frac{d_{G}(v)}{n} \ge \frac{\delta(G)}{n} > \frac{4}{3\sqrt{5}}. 
    \end{align}
    \begin{claim}\label{CLAIM:K4-free-case2-GU}
        We have $\delta(G[U]) > \frac{6}{17}|U|$. 
        Thus, by Theorem~\ref{THM:Jin93}, $G[U]$ is $\Gamma_{5}$-colorable. 
    \end{claim}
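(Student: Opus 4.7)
The plan is to establish $\delta(G[U]) > \frac{6}{17}|U|$ by a vertex-by-vertex upper bound on the link $L_{\mathcal{H}}(u)$ for each $u \in U$, leveraging the constraint $|L_{\mathcal{H}}(u)| \ge \delta(\mathcal{H}) > \frac{4n^2}{45}$.

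First I would prove the key structural fact that for every $u \in U$, the link $L_{\mathcal{H}}(u)$ contains no edge with both endpoints in $U$. Indeed, if $\{u, a, b\} \in \mathcal{H}$ with $a, b \in U$, then $\{u,a\}, \{u,b\}, \{a,b\} \in \partial\mathcal{H} \subseteq G$, and since $v$ is adjacent to all of $u, a, b$ (as $u, a, b \in U = N_{G}(v)$), the set $\{v, u, a, b\}$ would form a $K_{4}$ in $G$, contradicting its $K_{4}$-freeness.

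Next, I fix $u \in U$ and set $a_u := |N_{G}(u) \cap I|$, $b_u := |N_{G}(u) \cap T|$, and $c_u := d_{G[U]}(u)$. Partitioning the edges of $L_{\mathcal{H}}(u)$ by the locations of their endpoints in $I \cup U \cup T$, one observes that $II$-edges are forbidden since $I$ is independent in $G$, $UU$-edges are forbidden by the previous paragraph, and $G[N_{G}(u)\cap T]$ has at most $b_u^2/4$ edges by Mantel's theorem (using that $G[N_{G}(u)]$ is triangle-free, a consequence of the $K_{4}$-freeness of $G$). Collecting these bounds yields
\[
|L_{\mathcal{H}}(u)| \le c_u(a_u + b_u) + a_u b_u + \frac{b_u^2}{4}.
\]
Substituting the trivial bounds $a_u \le xn$, $b_u \le tn$, $a_u + b_u \le (1-y)n$, together with $|L_{\mathcal{H}}(u)| > \frac{4n^2}{45}$, and rearranging yields a lower bound of the form
\[
c_u > \frac{n}{1-y}\left(\frac{4}{45} - xt - \frac{t^2}{4}\right).
\]

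Finally, I would verify that this lower bound exceeds $\frac{6yn}{17}$, which reduces to the algebraic inequality $\frac{4}{45} - \frac{6y(1-y)}{17} > xt + \frac{t^2}{4}$ under the constraints $x > \frac{16}{3\sqrt{5}} - 2$, $y > \frac{4}{3\sqrt{5}}$, and $t = 1 - x - y < 3 - \frac{20}{3\sqrt{5}}$; crucial here is that $t$ is very small. The hard part will be closing this numerical inequality at the boundary of the parameter region, where the margin is tight; if the direct estimate proves insufficient, one would need to sharpen the intermediate bounds, for example by refining the estimate on $a_u$ via the maximality of $I$, or by exploiting further structure of $G[N_{G}(u)]$ beyond its triangle-freeness. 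Once the min-degree bound is in hand, Theorem~\ref{THM:Jin93} applied with $d = 5$ immediately delivers the $\Gamma_{5}$-colorability of $G[U]$.
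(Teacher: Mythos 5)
Your overall strategy is sound---isolate $c_u = d_{G[U]}(u)$ from an upper bound on $|L_{\mathcal{H}}(u)|$---and your structural observation that $L_{\mathcal{H}}(u)$ has no $UU$-edges is correct and matches the paper's use of Fact~\ref{FACT:2-links-cancellative}. However, the final numerical inequality you need does not close, so there is a genuine gap. Your bound
\[
c_u \;>\; \frac{n}{1-y}\left(\frac{4}{45} - xt - \frac{t^2}{4}\right)
\]
requires $\frac{4}{45} - xt - \frac{t^2}{4} > \frac{6y(1-y)}{17}$ to conclude. Taking $x\to \frac{16}{3\sqrt{5}}-2\approx 0.3851$, $y\to \frac{4}{3\sqrt{5}}\approx 0.5963$, $t = 1-x-y \to 3-\frac{20}{3\sqrt{5}}\approx 0.0186$, the left-hand side is $\approx 0.0816$ while the right-hand side is $\approx 0.0850$: the inequality fails by about $4\%$, and there is an open neighborhood of this corner where your estimate gives $\delta(G[U])/|U|\approx 0.339 < 6/17 \approx 0.353$.

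The loss comes from grouping the $UI$- and $UT$-edges into $c_u(a_u+b_u)$, which forces you to divide by $a_u+b_u\le (1-y)n = (x+t)n$. The paper avoids this by never bounding the $UT$-edges through $c_u$ at all: it works with the restricted link $L_{\mathcal{H}}(u, U\cup I)$, every edge of which is an $I$--$N_G(u,U)$ pair, so $|L_{\mathcal{H}}(u, U\cup I)| \le |I|\cdot c_u = xn\cdot c_u$. The edges of $L_{\mathcal{H}}(u)$ that touch $T$ are discarded via Lemma~\ref{LEMMA:cancellative-link-in-S} (each $w\in T$ has link-degree at most $\alpha(\mathcal{H})$ in $L_{\mathcal{H}}(u)$ by Fact~\ref{FACT:2-links-cancellative}~\ref{FACT:2-links-cancellative-2}), giving a loss of only $\alpha(\mathcal{H})\cdot|T|$. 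Solving then yields $c_u > \frac{4n}{45x} - tn$, i.e.\ division by $xn$ rather than $(x+t)n$. That difference---bounding the $T$-incident edges by the $T$-side degree $\alpha(\mathcal{H})$ instead of by the $U$-side degree $c_u$ so that $c_u$ disappears from the loss term---buys exactly the $\approx 5\%$ needed to push $\delta(G[U])/|U|$ from $\approx 0.339$ up to $\frac{33(12-5\sqrt{5})}{76}\approx 0.356 > \frac{6}{17}$. Your remark that one would ``need to sharpen the intermediate bounds'' was correct; the required sharpening is this restriction to $L_{\mathcal{H}}(u,U\cup I)$.
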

    \begin{proof}[Proof of Claim~\ref{CLAIM:K4-free-case2-GU}]
        Fix a vertex $u \in U$ with $d_{G[U]}(u) = \delta(G[U])$. 
        By Lemma~\ref{LEMMA:cancellative-link-in-S}, we have 
        \begin{align}\label{equ:K4-free-case2-SI}
            |L_{\mathcal{H}}(u, U\cup I)|
            \ge |L_{\mathcal{H}}(u)| - \alpha(\mathcal{H})\cdot |V\setminus (U\cup I)|
            >\frac{4}{45}n^2 -  x(1-x-y)n^2. 
        \end{align}
        Since $I$ is an independent set and $G[U]$ is triangle-free, every member of $L_{\mathcal{H}}(u, U\cup I)$ must contain one vertex from $I$ and one vertex from $N_{G}(u,U) \subseteq U$. It follows that 
        \begin{align*}
            |L_{\mathcal{H}}(u, S\cup I)|
            \le |I| |N_{G}(u,U)|
            = xn \cdot |N_{G}(u,U)|. 
        \end{align*}
        Combining this with~\eqref{equ:K4-free-case2-SI}, we obtain 
        \begin{align*}
            \frac{\delta(G[U])}{|U|}
            = \frac{|N_{G}(u,U)|}{|U|}
            \ge \frac{|L_{\mathcal{H}}(u, S\cup I)|}{|U||I|}
            & > \frac{4n^2/45 - x(1-x-y)n^2}{xn \cdot yn} \\
            & =  \frac{4}{45xy} - \frac{1-x}{y} + 1
            = 1- \frac{1}{y} \left(1-x-\frac{4}{45x}\right).
        \end{align*}
        %
        Straightforward calculations show that $1-x-\frac{4}{45x}$ is decreasing on $\left[\frac{16}{3\sqrt{5}}-2,1\right]$. Thus 
        \begin{align*}
            1- \frac{1}{y} \left(1-x-\frac{4}{45x}\right)
            > 1- \frac{1}{4/3\sqrt{5}} \left(1-\frac{16}{3\sqrt{5}}+2 - \frac{4}{45\cdot 16/3\sqrt{5}}\right) 
            = \frac{33(12-5\sqrt{5})}{76}.
        \end{align*}
        Therefore, 
        \begin{align*}
            \frac{\delta(G[U])}{|U|}
            > \frac{33(12-5\sqrt{5})}{76}
            > \frac{6}{17}, 
        \end{align*}
        as desired. 
    \end{proof}

    \begin{claim}\label{CLAIM:K4-free-case2-GU-bipartite}
        The induced subgraph $G[U]$ is bipartite. 
    \end{claim}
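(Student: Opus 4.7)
The plan is to assume for contradiction that $G[U]$ is non-bipartite and then invoke Lemma~\ref{LEMMA:opt-2} on a normalized vector of part sizes extracted from a $\Gamma_d$-coloring of $G[U]$. The raw ingredient is Claim~\ref{CLAIM:K4-free-case2-GU}, which simultaneously supplies the $\Gamma_5$-colorability of $G[U]$ and the local degree bound $\delta(G[U]) > \frac{6}{17}|U|$ that matches the coefficient in the second hypothesis of Lemma~\ref{LEMMA:opt-2}.

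Concretely, I would first fix a homomorphism $\phi \colon G[U] \to \Gamma_d$ for some $d \in [2,12]$ whose color classes $D_i := \phi^{-1}(i)$ are all non-empty. Starting from the $\Gamma_5$-coloring from Claim~\ref{CLAIM:K4-free-case2-GU} and discarding empty classes in a structure-preserving way (using the nested inclusions $\Gamma_i \subseteq \Gamma_d$), one can pass to such a reduced homomorphism; non-bipartiteness of $G[U]$ rules out $d = 1$, so $d \ge 2$. Setting $y_i := |D_i|/n$ and $y := \sum_i y_i = |U|/n$, I would then verify the three hypotheses of Lemma~\ref{LEMMA:opt-2}. The edge-density condition follows from $L_{\mathcal{H}}(v) \subseteq G[U]$ together with the $\Gamma_d$-coloring structure, giving $\frac{4}{45}n^2 < |L_{\mathcal{H}}(v)| \le n^2 \sum_{ij \in \Gamma_d} y_i y_j$. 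The local neighborhood condition follows by choosing any $u \in D_i$, noting $N_{G[U]}(u) \subseteq \bigcup_{j \in N_{\Gamma_d}(i)} D_j$, and applying $\delta(G[U]) > \frac{6}{17}|U|$ to obtain $\sum_{j \in N_{\Gamma_d}(i)} y_j > \frac{6}{17} y$. The total-size bound $y < 3 - \frac{16}{3\sqrt{5}}$ follows from $|U| \le n - |I| = (1-x)n$ combined with $x > \frac{16}{3\sqrt{5}} - 2$ (the defining hypothesis of Case 2). Lemma~\ref{LEMMA:opt-2} then yields the desired contradiction.

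The main obstacle is the reduction step: producing a $\Gamma_d$-coloring in which \emph{every} color class is non-empty. If the $\Gamma_5$-coloring supplied by Claim~\ref{CLAIM:K4-free-case2-GU} leaves some classes of $\Gamma_5$ empty, the subgraph of $\Gamma_5$ on the used classes is not \emph{a priori} isomorphic to any $\Gamma_{d'}$ with $d' \le 5$, so one cannot directly invoke Lemma~\ref{LEMMA:opt-2}. The generous admissible range $[2,12]$ in Lemma~\ref{LEMMA:opt-2}, far wider than the value $d=5$ that Claim~\ref{CLAIM:K4-free-case2-GU} directly provides, hints that such irregular configurations can be absorbed by re-embedding the partial $\Gamma_5$-coloring into some larger $\Gamma_d$ whose support covers all used classes. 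Carrying out this re-embedding rigorously, so that the resulting tuple $(y_1,\ldots,y_{3d-1})$ is genuinely positive in every coordinate, is where I expect the bulk of the technical casework to lie.
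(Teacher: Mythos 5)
Your overall strategy matches the paper's: fix a $\Gamma_d$-coloring of $G[U]$ with every class non-empty, extract the normalized part sizes $y_i$, and verify the three hypotheses of Lemma~\ref{LEMMA:opt-2}. Your verifications of the three inequalities (edge density from $L_{\mathcal{H}}(v)\subseteq G[U]$, local neighborhood bound from Claim~\ref{CLAIM:K4-free-case2-GU}, total-size bound from $|U|\le (1-x)n$ with $x>\frac{16}{3\sqrt{5}}-2$) are all correct and are exactly the ones the paper uses.

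The step you flag as the main obstacle --- producing a homomorphism with \emph{every} color class non-empty --- is indeed a subtle point, and the paper itself passes over it silently, simply asserting that Claim~\ref{CLAIM:K4-free-case2-GU} yields a surjective homomorphism onto some $\Gamma_d$ with $d\in[5]$. However, your proposed workaround goes in the wrong direction. Re-embedding the image into a \emph{larger} $\Gamma_d$ cannot help: a surjective homomorphism onto $\Gamma_d$ must use exactly $3d-1$ colors, so the number of used classes would have to lie in $\{2,5,8,11,\ldots\}$, and a non-surjective image inside $\Gamma_5$ still has no reason to map surjectively onto any bigger Andr\'asfai graph. The correct reduction descends to a \emph{smaller} $d$. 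It is a standard fact about the Andr\'asfai chain that $\Gamma_d$ minus any vertex admits a homomorphism onto $\Gamma_{d-1}$ (for $d=2$: $C_5$ minus a vertex is a path, which maps onto $K_2$; for $d=3$: the Wagner graph minus a vertex maps onto $C_5$; and similarly for larger $d$). Hence if the $\Gamma_5$-homomorphism provided by Claim~\ref{CLAIM:K4-free-case2-GU} misses a vertex, composing with $\Gamma_5-v\to\Gamma_4$ yields a $\Gamma_4$-coloring, and iterating this descent terminates at a surjective homomorphism onto some $\Gamma_{d'}$ with $d'\le 5$. If $d'=1$ then $G[U]$ is bipartite directly; otherwise Lemma~\ref{LEMMA:opt-2} applies as you described. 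The wider range $d\in[2,12]$ in Lemma~\ref{LEMMA:opt-2} is not a hint toward enlarging $d$ --- the proof of Proposition~\ref{PROP:AES-shadow-K4} only ever invokes $d\le 5$.
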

    \begin{proof}[Proof of Claim~\ref{CLAIM:K4-free-case2-GU-bipartite}]
        It follows from Claim~\ref{CLAIM:K4-free-case2-GU} that there exists a surjective homomorphism $\psi$ from $G[U]$ to $\Gamma_{d}$ for some $d\in [5]$. We are done if $d=1$, so we may assume that $d\in [2,5]$. 

        Let $D_{i} = \psi^{-1}(i)$ and $y_i \coloneqq |D_{i}|/n$ for $i \in [3d-1]$. Since $\psi \colon U \to [3d-1]$ is surjective, we have $y_i > 0$ for every $i \in [3d-1]$. 
        First, since $x = \alpha(G)/n \ge \frac{16}{3\sqrt{5}} - 2$, we have 
        \begin{align*}
            \sum_{i\in [3d-1]} y_i 
            \le 1 -x 
            \le 3- \frac{16}{3\sqrt{5}}. 
        \end{align*}
        Next, for each $i \in [3d-1]$, fix a vertex $u_i \in D_i$. 
        It follows from Claim~\ref{CLAIM:K4-free-case2-GU} that 
        \begin{align*}
            \sum_{j \in N_{\Gamma_{d}}(i)} y_j 
            \ge \frac{d_{G[U]}(u_i)}{n}
            > \frac{6}{17}\cdot \frac{|U|}{n}
            = \frac{6}{17} \sum_{i\in [3d-1]} y_i
            \quad\text{for every}\quad i \in [3d-1].
        \end{align*}
        Finally, since $L_{\mathcal{H}}(v) \subseteq G[U]$, it follows from $\delta(\mathcal{H}) > \frac{4}{45}n^2$ that 
        \begin{align*}
            \sum_{ij\in \Gamma_{d}} y_i y_j 
            \ge \frac{|G[U]|}{n^2}
            \ge \frac{|L_{\mathcal{H}}(v)|}{n^2}
            \ge \frac{\delta(\mathcal{H})}{n^2}
            > \frac{4}{45}. 
        \end{align*}
        However, by Lemma~\ref{LEMMA:opt-2}, these inequalities are impossible. Therefore, $G[U]$ is bipartite. 
    \end{proof}

    By Claim~\ref{CLAIM:K4-free-case2-GU-bipartite}, $G[U]$ is bipartite. 
    Let $U_1$ and $U_2$ denote the two parts of $G[U]$, noting that both $U_1$ and $U_2$ are independent in $G$. 
    %
    Thus $\mathcal{H}$ contains three pairwise disjoint independent sets $I, U_1, U_2$. 
    Let  $(\alpha, \beta, \delta, \gamma) \coloneqq \left(1-\frac{4}{3\sqrt{5}}, \frac{4}{3\sqrt{5}}, \frac{4}{45}, 3-\frac{20}{3\sqrt{5}}\right)$.  It is straightforward to verify that this choice of $(\alpha, \beta, \delta, \gamma)$ satisfies~\eqref{equ:LEMMA:U1-U2-U3-3-partite}.  
    
    First, note that, by~\eqref{equ:K4-free-case2-U-size},
    \begin{align*}
        \alpha(\mathcal{H})
        = |I|
        \le n- |U|
        < n- \frac{4}{3\sqrt{5}}n
        = \alpha n
        \quad\text{and}\quad 
        \delta(\mathcal{H})
        > \frac{4}{45}n^2 
        = \delta n^2.
    \end{align*}
    In addition, since $\max\left\{|I|,  |U_1|, |U_2|\right\} \le \alpha(\mathcal{H})$, it follows from~\eqref{equ:K4-free-case2-U-size} that   
    \begin{align*}
        \min\left\{|I|+|U_1|, |I|+|U_2|, |U_1|+|U_2|\right\}
        \ge |I|+|U_1|+|U_2| - \alpha(G)
        > \frac{4}{3\sqrt{5}}n
        = \beta n. 
    \end{align*}
    Finally, by~\eqref{equ:K4-free-case2-U-size},
    \begin{align*}
        |I|+|U_1|+|U_2|
        = \alpha(G) + |U|
        > \left(\frac{16}{3\sqrt{5}}-2\right) n + \frac{4}{3\sqrt{5}} n
        = \left(\frac{20}{3\sqrt{5}} -2\right) n
        = (1-\gamma)n.
    \end{align*}
    So it follows from Lemma~\ref{LEMMA:U1-U2-U3-3-partite} that $\mathcal{H}$ is $3$-partite. This completes the proof of Proposition~\ref{PROP:AES-shadow-K4}. 
\end{proof}

\section{Concluding remarks}\label{SEC:Remark}
%
Compared to the rich history of research on the structure of dense triangle-free graphs~\cite{And62,AES74,ES73,H82,Jin93,CJK97,Thom02,Bra03,BT05,Luc06,ABGKM13,LPR21,LPR22}, our results (Theorems~\ref{THM:main-AES-F5} and~\ref{THM:main-AES-K43-F5}) on generalized triangles represent only the beginning of a broader investigation into the structures of dense $F_5$-free $3$-graphs. 
There are many natural questions one could ask in this direction, such as extensions of parallel results on triangle-free graphs to $F_5$-free $3$-graphs. We hope our results could inspire further research in this area.  
It is worth mentioning that some bounds for the chromatic threshold problem of $F_5$-free $3$-graphs have been established by Balogh--Butterfield--Hu--Lenz--Mubayi in~{\cite[Theorem~2.7]{BBHLM16}}. 

One could also consider extending Theorem~\ref{THM:main-AES-F5} to other hypergraphs. A partial list of hypergraphs with Andr{\'a}sfai--Erd\H{o}s--S\'{o}s-type stability is provided in~\cite{HLZ24}. 
A natural direction is to extend Theorem~\ref{THM:main-AES-F5} to $4$-graphs, and we refer the reader to~\cite{Sid87,Pik08} for results on its Tur\'{a}n number. 
An interesting observation by Pikhurko~\cite{Pik08} is that the corresponding Andr{\'a}sfai--Erd\H{o}s--S\'{o}s theorem does not hold for $5$-uniform and $6$-uniform generalized triangles (see~\cite{FF89,NY17tri} for results on their Tur\'{a}n numbers). For $r \ge 7$, even determining their Tur\'{a}n densities remains an open question. 

Recall that a key ingredient in the proof of Theorem~\ref{THM:main-AES-F5} is establishing an Andr{\'a}sfai--Erd\H{o}s--S\'{o}s theorem for $3$-graphs whose shadow does not contain $K_4$  (Proposition~\ref{PROP:AES-shadow-K4}). 
A natural extension of this proposition is to replace $K_{4}$ with $K_{\ell+1}$ for $\ell\ge 4$ and to consider general $r$.

Let $r > i \ge 1$ be integers, the \textbf{$i$-th shadow} of an $r$-graph $\mathcal{H}$ is 
\begin{align*}
    \partial_{i}\mathcal{H}
    \coloneqq \left\{e\in \binom{V(\mathcal{H})}{r-i} \colon \text{there exists $E\in \mathcal{H}$ with $e\subseteq E$}\right\}.
\end{align*}
For every $i$-set $S\subseteq V(\mathcal{H})$, the degree of $S$ in $\mathcal{H}$ is the number of edges containing $S$. 
Let the \textbf{minimum positive $i$-degree} of $\mathcal{H}$ be defined as 
\begin{align*}
    \delta_{i}^{+}(\mathcal{H})
    \coloneqq \min\left\{d_{\mathcal{H}}(S) \colon S\in \partial_{r-i}\mathcal{H}\right\}.
\end{align*}
\begin{problem}\label{PROB:AES-shadow-Kr}
    Let $\ell \ge r > i \ge 1$ be integers. 
    Determine the minimum real number $\delta_{\ell, r, i}$ such that every $n$-vertex $r$-graph $\mathcal{H}$ satisfying $\delta_{i}^{+}(\mathcal{H}) > \delta_{\ell, r, i} n^{r-i}$ and $K_{\ell+1} \not\subseteq \partial_{r-2}\mathcal{H}$ is $\ell$-partite. 
\end{problem}

Hou--Li--Yang--Zeng--Zhang considered the case $(r,i) = (3,2)$ with the assumption that $\mathcal{H}$ is maximal in~\cite{HLYZZ22}. 
A straightforward application (see Claim~\ref{CLAIM:G-min-deg}) of the Andr{\'a}sfai--Erd\H{o}s--S\'{o}s Theorem and a theorem of Mubayi~\cite{Mub06} yields the following result for $i=1$, which is likely not to be tight. 
\begin{proposition}\label{PROP:AES-shadow-Kr}
    Let $n \ge \ell \ge r \ge 4$ be integers. 
    Suppose that $\mathcal{H}$ is an $n$-vertex $r$-graph satisfying $\delta(\mathcal{H}) > \binom{\ell-1}{r-1} \left(\frac{3 \ell-4}{3 \ell^2-4 \ell+1}\right)^{r-1}  n^{r-1}$ and $K_{\ell+1} \not\subseteq \partial\mathcal{H}$. Then $\mathcal{H}$ is $\ell$-partite. 
\end{proposition}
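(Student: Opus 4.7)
The plan is to parallel the argument of Claim~\ref{CLAIM:G-min-deg}: translate the hypothesis on $\delta(\mathcal{H})$ into a lower bound on $\delta(\partial\mathcal{H})$, and then invoke the classical Andr{\'a}sfai--Erd\H{o}s--S\'{o}s Theorem on $\partial\mathcal{H}$. The algebraic miracle that makes the constants line up is the factorization $3\ell^{2}-4\ell+1=(3\ell-1)(\ell-1)$, so that $\frac{3\ell-4}{3\ell^{2}-4\ell+1}=\frac{3\ell-4}{(3\ell-1)(\ell-1)}$.

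First, I would fix an arbitrary vertex $v\in V(\mathcal{H})$ and set $m:=d_{\partial\mathcal{H}}(v)$. The link $L_{\mathcal{H}}(v)$ is an $(r-1)$-graph with vertex set $N_{\partial\mathcal{H}}(v)$, so $v(L_{\mathcal{H}}(v))=m$, and its shadow $\partial L_{\mathcal{H}}(v)$ is contained in the induced subgraph $\partial\mathcal{H}[N_{\partial\mathcal{H}}(v)]$. Since $v$ is joined in $\partial\mathcal{H}$ to every vertex of $N_{\partial\mathcal{H}}(v)$ and $\partial\mathcal{H}$ is $K_{\ell+1}$-free, $\partial\mathcal{H}[N_{\partial\mathcal{H}}(v)]$ must be $K_{\ell}$-free; hence so is $\partial L_{\mathcal{H}}(v)$.

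Next, I would apply Mubayi's theorem~\cite{Mub06} to the $(r-1)$-graph $L_{\mathcal{H}}(v)$. Every edge of $L_{\mathcal{H}}(v)$ induces a $K_{r-1}$ in its shadow, and the maximum number of $(r-1)$-cliques in a $K_{\ell}$-free graph on $m$ vertices is attained on the balanced Tur{\'a}n graph $T(m,\ell-1)$ (via Zykov symmetrization and Maclaurin's inequality), yielding
\begin{equation*}
|L_{\mathcal{H}}(v)|\;\le\;N(K_{r-1},\partial L_{\mathcal{H}}(v))\;\le\;\binom{\ell-1}{r-1}\left(\frac{m}{\ell-1}\right)^{r-1}.
\end{equation*}
Combining this with $d_{\mathcal{H}}(v)\ge\delta(\mathcal{H})>\binom{\ell-1}{r-1}\left(\frac{3\ell-4}{(3\ell-1)(\ell-1)}\right)^{r-1}n^{r-1}$ and taking $(r-1)$-th roots cleanly cancels the binomial coefficient and a factor of $(\ell-1)^{r-1}$, leaving
\begin{equation*}
m\;>\;\frac{3\ell-4}{3\ell-1}\,n.
\end{equation*}

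Since $v$ was arbitrary, this shows $\delta(\partial\mathcal{H})>\frac{3\ell-4}{3\ell-1}n$. Applying the Andr{\'a}sfai--Erd\H{o}s--S\'{o}s Theorem to the $K_{\ell+1}$-free graph $\partial\mathcal{H}$ then yields that $\partial\mathcal{H}$ is $\ell$-partite. Any such $\ell$-partition of $V(\mathcal{H})$ is automatically an $\ell$-partition of $\mathcal{H}$, because each edge of $\mathcal{H}$ is an $r$-clique in $\partial\mathcal{H}$ and hence uses $r$ distinct colour classes. The only step that requires care is citing Mubayi's theorem in the precise form that gives the bound $\binom{\ell-1}{r-1}\bigl(\tfrac{m}{\ell-1}\bigr)^{r-1}$; once that is set, the rest is a one-line degree-passing computation followed by a direct appeal to the AES theorem.
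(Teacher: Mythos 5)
Your argument is correct and follows exactly the route the paper sketches for this proposition: pass from $\delta(\mathcal{H})$ to $\delta(\partial\mathcal{H})$ by observing that the link of any vertex is an $(r-1)$-graph whose $2$-shadow lives in a $K_\ell$-free neighbourhood, bound its size via the Zykov/Maclaurin estimate $\binom{\ell-1}{r-1}\bigl(\tfrac{m}{\ell-1}\bigr)^{r-1}$ (which is precisely what Mubayi's theorem~\cite{Mub06} delivers for a hypergraph with clique-free shadow), and then invoke the Andr\'{a}sfai--Erd\H{o}s--S\'{o}s theorem after the factorization $3\ell^2-4\ell+1=(3\ell-1)(\ell-1)$ cleans up the constant. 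The computation and the final step lifting the $\ell$-partition from $\partial\mathcal{H}$ to $\mathcal{H}$ are both sound.
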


As noted after Proposition~\ref{PROP:F5-shadow-no-K4}, the constant $1/12$ in Proposition~\ref{PROP:F5-shadow-no-K4} is optimal. The construction of the witness is as follows$\colon$

Let $\mathcal{H}$ be an $n$-vertex $3$-graph where the vertex set $V(\mathcal{H})$ is partitioned into 
$7$ subsets $X,Y_1,Y_2,Y_3,Z_1,Z_2,Z_3$ with 
$|X| = 10$, 
\begin{align*}
    |Y_1|=|Y_2|=|Y_3|=\frac{n-10}{\sqrt{12}}, 
    \quad\text{and}\quad 
    |Z_1|=|Z_2|=|Z_3|=\left(\frac{1}{3}-\frac{1}{\sqrt{12}}\right)(n-10). 
\end{align*}
Assume that $X=\{1,2,3,4\}\cup \{x_{ij}\colon 1\leq i<j\leq 4\}$. 
We add triples of the form $\{i,j,x_{ij}\}$ for $1\leq i<j\leq 4$ to $\mathcal{H}$ (i.e. $\mathcal{H}[X]$ is the expansion of $K_4$). 
Next, we partition the edge set of complete $3$-partite graph $K[Y_1\cup Z_1,Y_2\cup Z_2,Y_3\cup Z_3]$ into $4$ parts$\colon$
\begin{itemize}
    \item $E_1 \coloneqq \{(y_2,y_3)\colon y_2\in Y_2,y_3\in Y_3\}$, 
    \item $E_2 \coloneqq \{(y_1,y_3)\colon y_1\in Y_1,y_3\in Y_3\}$, 
    \item $E_3 \coloneqq \{(y_1,y_2)\colon y_1\in Y_1,y_2\in Y_2\}$, 
    \item $E_4 \coloneqq K[Y_1\cup Z_1,Y_2\cup Z_2,Y_3\cup Z_3] \setminus (E_1\cup E_2\cup E_3)$. 
\end{itemize}
Now we define the edge set of $\mathcal{H}$ by setting
\begin{itemize}
    \item $L_{\mathcal{H}}(v)=E_1$ for every $v\in Y_1\cup Z_1\cup \{1,x_{23},x_{24},x_{34}\}$;
    \item $L_{\mathcal{H}}(v)=E_2$ for every $v\in Y_2\cup Z_2\cup \{2,x_{13},x_{14}\}$;
    \item $L_{\mathcal{H}}(v)=E_3$ for every $v\in Y_3\cup Z_3\cup \{3,x_{12}\}$;
    \item $L_{\mathcal{H}}(4)=E_4$.
\end{itemize}
It is straightforward to show that $\mathcal{H}$ is $F_5$-free (although it is not $K_{4}^{3-}$-free), and that $\delta(\mathcal{H}) \ge \frac{(n-10)^2}{12}$. Thus the bound in Proposition~\ref{PROP:F5-shadow-no-K4} is asymptotically tight. 

Using a blowup argument analogous to the proof of Theorem~\ref{THM:main-AES-K43-F5}, we can establish the following result. However, it is unclear whether the constant $1/12$ is tight in this case. 
\begin{proposition}\label{PROP:F5K43-shadow-no-K4} 
    Let $n \ge 1$ be an integer. 
    The shadow of every $n$-vertex $\{K_{4}^{3-}, F_5\}$-free $3$-graph with minimum degree greater than $n^2/12$ is $K_4$-free. 
\end{proposition}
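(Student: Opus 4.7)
The plan is a standard blowup argument, entirely analogous to the derivation of Theorem~\ref{THM:main-AES-K43-F5} from Theorem~\ref{THM:main-AES-F5}. Suppose for contradiction that there exists an $n$-vertex $\{K_{4}^{3-}, F_5\}$-free $3$-graph $\mathcal{H}$ with $\delta(\mathcal{H}) > n^2/12$ whose shadow $\partial\mathcal{H}$ contains a copy of $K_4$. Set $\eta \coloneqq \delta(\mathcal{H}) - n^2/12 > 0$, the degree surplus over the threshold.

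Next, consider the vertex-blowup $\mathcal{H}[m]$ for a large integer $m$ to be chosen, and write $N \coloneqq mn$. Three properties of $\mathcal{H}[m]$ will drive the argument. First, $\mathcal{H}[m]$ is $\{K_{4}^{3-}, F_5\}$-free, and in particular $F_5$-free, by the same blowup-preservation fact used in the proof of Theorem~\ref{THM:main-AES-K43-F5} (see~{\cite[p.~51]{LMR23unif}}). Second, the minimum degree scales quadratically:
\begin{align*}
    \delta(\mathcal{H}[m]) = m^2 \delta(\mathcal{H}) = m^2 \left( \frac{n^2}{12} + \eta \right) = \frac{N^2}{12} + m^2 \eta = \left(\frac{1}{12} + \frac{\eta}{n^2}\right) N^2.
\end{align*}
Third, since $\partial\mathcal{H}$ contains a $K_4$, so does $\partial\mathcal{H}[m]$: selecting one vertex from each of the four blowup classes of that $K_4$ produces four vertices pairwise contained in common edges of $\mathcal{H}[m]$.

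I would then set $\varepsilon \coloneqq \min\{1/180,\,\eta/(2n^2)\} \in (0, 1/180]$ and choose $m$ large enough that $N = mn \ge 1/(7\varepsilon^2) > 4628$. With these choices, $\delta(\mathcal{H}[m]) = \left(\tfrac{1}{12} + \tfrac{\eta}{n^2}\right)N^2 > \left(\tfrac{1}{12} + \varepsilon\right)N^2$, so Proposition~\ref{PROP:F5-shadow-no-K4} applies to $\mathcal{H}[m]$ and forces $\partial\mathcal{H}[m]$ to be $K_4$-free, contradicting the $K_4$ constructed in the previous paragraph.

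I do not anticipate any real obstacle here; the argument is a one-step reduction via blowup. The only subtlety is choosing $\varepsilon$ and $m$ so that both conditions $\varepsilon \le 1/180$ and $mn \ge 1/(7\varepsilon^2)$ of Proposition~\ref{PROP:F5-shadow-no-K4} are satisfied while still preserving the minimum-degree surplus captured by $\eta$.
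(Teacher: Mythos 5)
Your argument is correct and is exactly the blowup reduction the paper alludes to: inflate $\mathcal{H}$ to $\mathcal{H}[m]$, observe that $\{K_4^{3-},F_5\}$-freeness and the presence of a $K_4$ in the shadow are both preserved while the minimum-degree surplus $\eta/n^2$ persists as a fixed $\varepsilon$, and invoke Proposition~\ref{PROP:F5-shadow-no-K4} once $N=mn$ is large enough. The choice $\varepsilon = \min\{1/180,\eta/(2n^2)\}$ correctly keeps the inequality strict, so there is no gap.
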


\section*{Acknowledgement}
X.L. is very grateful to Levente Bodn\'{a}r for providing a computer-assisted Flag Algebra proof for an earlier version of Proposition~\ref{PROP:F5-shadow-no-K4}. 
X.L. also extends sincere thanks to Sijie Ren and Jian Wang for their warm hospitality during the visit to Taiyuan University of Technology. 
\bibliographystyle{alpha}
\bibliography{HypergraphAES}
\end{document}